\documentclass[11pt,a4paper]{article}

\usepackage[utf8]{inputenc}

\usepackage{fullpage}
\usepackage{titlesec}

\usepackage{authblk}

\usepackage[usenames,dvipsnames]{xcolor}
\usepackage{graphicx}
\usepackage{subcaption}
\usepackage{tikz}
\usetikzlibrary{calc,positioning}
\usepackage{pgfplots}
\pgfplotsset{compat=1.18}
\pgfdeclarelayer{bg}
\pgfsetlayers{bg,main}

\usepackage{amssymb,dsfont}

\usepackage{amsmath}
\usepackage{amsthm}
\usepackage{mathtools}

\usepackage{multirow}
\usepackage{booktabs}
\usepackage{longtable}

\usepackage{enumerate}

\usepackage{hyperref}
\usepackage[capitalise]{cleveref}

\usepackage[linesnumbered,ruled]{algorithm2e}
\crefname{algocf}{Algorithm}{Algorithms}
\Crefname{algocf}{Algorithm}{Algorithms}

\usepackage{xspace}
\xspaceremoveexception{-}

\newcommand{\dotcup}{\mathbin{\dot\cup}}

\newcommand{\arr}{\mathrm{arr}}
\newcommand{\dep}{\mathrm{dep}}
\newcommand{\drive}{\mathrm{drive}}
\newcommand{\wait}{\mathrm{wait}}
\newcommand{\transfer}{\mathrm{trans}}
\newcommand{\energy}{\mathrm{energy}}
\newcommand{\energyact}{M}               
\newcommand{\possenergyact}{A_{\energy}} 

\newcommand{\ac}{\mathrm{ac}}
\newcommand{\br}{\mathrm{br}}
\newcommand{\opt}{\mathrm{opt}}
\newcommand{\length}{\operatorname{length}}
\newcommand{\pesp}{PESP\xspace}
\newcommand{\pesppassenger}{PESP-Passenger\xspace}
\newcommand{\pespenergy}{PESP-Energy\xspace}
\newcommand{\pesppassenergy}{PESP-Passenger-Energy\xspace}
\newcommand{\osn}{one-station network\xspace}
\newcommand{\w}{\rho} 

\colorlet{mygreen}{green!40!black}
\colorlet{myblue}{blue!70!black}
\colorlet{myred}{red!70!black}
\definecolor{brown}{rgb}{0.8,0.43,0.2}
\definecolor{DklCyan}{rgb}{0.12,0.47,0.58}

\newcommand{\R}{\mathbb R}
\newcommand{\Z}{\mathbb Z}

\newcommand{\cL}{\mathcal{L}}
\newcommand{\cA}{\mathcal{A}}
\newcommand{\cE}{\mathcal{E}}

\newcommand{\argmin}{\operatorname{arg\,min}}

\newtheorem{theorem}{Theorem}[section]
\newtheorem{proposition}[theorem]{Proposition}
\newtheorem{corollary}[theorem]{Corollary}
\newtheorem{lemma}[theorem]{Lemma}
\theoremstyle{definition}
\newtheorem{definition}[theorem]{Definition}
\theoremstyle{remark}
\newtheorem*{remark}{Remark}

\tikzset{
	>=stealth,
	station/.style={lightgray, draw=black},
	event/.style={circle, draw=black, very thick, minimum size = 1.5em, inner sep=0pt, font = \scriptsize},
	activity/.style={->, ultra thick},
	energy/.style={activity, red},
	brake/.style={activity, BurntOrange},
	accelerate/.style={activity, ForestGreen},
	Arr/.style={event, fill=white},
	Dep/.style={event, fill=blue!20!white}
}

\definecolor{S1}{HTML}{DB6AA2}
\definecolor{S2}{HTML}{007939}
\definecolor{S3}{HTML}{0068A9}
\definecolor{S41}{HTML}{A74329}
\definecolor{S42}{HTML}{C36324}
\definecolor{S45}{HTML}{C88B39}
\definecolor{S5}{HTML}{ED7508}
\definecolor{S7}{HTML}{816DA7}
\definecolor{S75}{HTML}{816DA7}
\definecolor{S8}{HTML}{65AA2F}
\definecolor{S9}{HTML}{9A2846}
\newcommand{\oktrainlabel}[2]{\raisebox{-.2em}{\includegraphics[width=.8em]{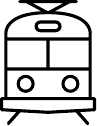}} #1 $\triangleright$ #2}
\newcommand{\oktrainx}[4]{\fill[fill=#4] ({#2},{#1-0.25}) rectangle (#3,{#1+0.25});}
\newcommand{\oktrainex}[4]{%
    \oktrainx{#1}{#2}{#3}{#4}
    \oktrainx{#1}{#2-0.5}{#2}{BurntOrange!50}
    \oktrainx{#1}{#3}{#3+0.4}{ForestGreen!50}}
\newcommand{\oktrain}[5]{%
    \node[anchor=east, font=\footnotesize] at (0,#1) {\oktrainlabel{#4}{#5}};
    \oktrainx{#1}{#2}{#3}{#4}}
\newcommand{\oktraine}[5]{%
    \oktrain{#1}{#2}{#3}{#4}{#5}
    \oktrainx{#1}{#2-0.5}{#2}{BurntOrange!50}
    \oktrainx{#1}{#3}{#3+0.4}{ForestGreen!50}
    }
\newcommand{\okoverlap}[4][0.4]{%
    \fill[black!30, fill opacity=0.5] (#4, #2) rectangle (#4+#1, #3);
    }

\makeatletter
\DeclareRobustCommand{\labelcrefrange}[2]{\@crefrangenostar{labelcref}{#1}{#2}}
\makeatother

\makeatletter
\renewcommand{\@fnsymbol}[1]{\@arabic{#1}}
\makeatother

\author[a]{Sarah Roth\thanks{s.roth@math.rptu.de}}
\author[a]{Sven Jäger\thanks{Present Address: Institute of Mathematics, TU Berlin, jaeger@math.tu-berlin.de}}
\author[b]{Niels Lindner\thanks{lindner@zib.de}}
\author[a,c]{Anita Schöbel\thanks{anita.schoebel@rptu.de}}
\affil[a]{Department of Mathematics, RPTU Kaiserslautern-Landau,\protect\\ Paul-Ehrlich-Str.\ 31, 67663 Kaiserslautern, Germany}
\affil[b]{Institute of Mathematics, Freie Universität Berlin, Germany,\protect\\ c/o Zuse Institute Berlin, Takustr.\ 7, 14195 Berlin, Germany}
\affil[c]{Fraunhofer Institute for Industrial Mathematics ITWM, \protect\\
Fraunhofer-Platz 1, 67663 Kaiserslautern, Germany}

\title{Optimizing Travel~Time and Regenerative~Energy for~Periodic~Timetables}

\begin{document}  
\maketitle

{\abstract{
Regenerating braking energy is one major pathway to make rail traffic energy-efficient. It is therefore desirable to design timetables that exploit this feature. However, timetables that allow to regenerate energy are often bad for the passengers. We hence formulate and analyze a bicriteria optimization problem (\pesppassenergy) to find periodic railway timetables that maximize the regenerated energy in terms of the brake-traction overlap time and minimize the travel time of the passengers. Our model extends the Periodic Event Scheduling Problem (PESP) and offers a rich combinatorial theory. We investigate its computational complexity on one-station networks, building on matchings and Hamiltonian paths. Besides showing its NP-hardness even for a single objective, we identify several polynomial-time solvable special cases. Finally, we provide two case studies, underlining the practicability of our model, and analyzing the Pareto front.
}}

\paragraph*{Keywords:} Energy-efficient timetabling, Periodic timetabling, Regenerative braking, Sustainable planning, Optimization

\clearpage
\tableofcontents

\section{Introduction}
\label{sec-motivation}

In order to reach the climate goals, it is necessary to strengthen the role of public transportation in passenger transport. However, also public transport systems themselves consume a large amount of energy. In rail traffic, traction energy for electric vehicles accounts for up to 80\% of the total consumption \cite{douglas_assessment_2015}.
This motivates to design energy-efficient train trajectories. In this paper, we exploit that modern electric motors are able to regenerate energy while braking. For example, Deutsche Bahn reports that 18.7\% of the traction energy could be recovered in the year 2024 \cite{bahn}. For trains, especially in low voltage direct current (DC) systems, the most efficient way to use the regained energy is a transmission to a close-by accelerating train via the catenary~\cite{Scheepmaker2017}. Such systems are for example used in many metro and suburban rail systems, but also for long-distance railways in several countries such as the Netherlands, Russia, Italy, or Spain.

From the perspective of planning, it is therefore sensible to schedule train timetables in a way that synchronizes braking and acceleration processes of nearby trains. Such a schedule has two advantages concerning the energy usage: First, it enables a maximum usage of the regenerated energy and, hence, reduces the total amount of energy that needs to be purchased by the public transport operator. Second, it prevents power peaks that might surcharge the transportation system's power supply.

However, from a passenger perspective, this synchronization of braking and acceleration is undesirable, as it prevents a passenger transfer from the braking to the accelerating train. Narrowly missing a train leads to frustration of the passengers, and systematically long waiting times might cause them to prefer the car over public transport.

As an illustration of this trade-off, we consider an example motivated by the Swiss railways. The trains are scheduled by means of an integrated fixed-interval timetable \cite{KräuchiChristian2004MZfd}, which is designed to prevent narrowly missed connections. At each station, the timetable prescribes a fixed time such that shortly before this time, all trains stopping at the station arrive, and the trains depart shortly after that time, see \cref{fig:example-basel-a}. This enables short transfer times from and to all directions. On the other hand, a transfer of regenerative braking energy from one train to another is impossible. For this objective, an efficient timetable would schedule the trains one after another such that the braking and acceleration phases overlap pairwise, see \cref{fig:example-basel-b}.

\begin{figure}[ht]
    \centering
	\begin{subfigure}{0.33\textwidth}
		\centering
		\begin{tikzpicture}[xscale=.59,yscale=.9]
            \tikzstyle{n} = [font=\small]
			\draw (-.2,-4.2) -- (-.2,-0.2);
			\foreach \i in {1,...,4} {
				\fill[fill=BurntOrange!50] (0,-\i) rectangle node[n] {br} (1,{.7-\i});
				\fill[fill=lightgray] (1,-\i) rectangle node[n] {wait} (3,{.7-\i});
				\fill[fill=ForestGreen!50] (3,-\i) rectangle node[n] {acc} (4,{.7-\i});
				\node[left] at (-.2,{.35-\i}) {\raisebox{-.1em}{\includegraphics[width=.8em]{train-icon.pdf}} \i};
 			}
			\draw[->] (-.2,-4.2) -- +(4.6,0) node[n, below] {time};
		\end{tikzpicture}
		
		\caption{arr/dep around fixed time}
		\label{fig:example-basel-a}
	\end{subfigure}
	\begin{subfigure}{0.66\textwidth}
		\centering
		\begin{tikzpicture}[xscale=.59,yscale=.9]
            \tikzstyle{n} = [font=\small]
			\draw (2.8,-4.2) -- (2.8,-0.2);
			\foreach \i in {1,...,4} {
				\fill[fill=BurntOrange!50] (3*\i,-\i) rectangle node[n] {br} +(1,.7);
				\fill[fill=lightgray] ({3*\i+1},-\i) rectangle node[n] {wait} +(2,.7);
				\fill[fill=ForestGreen!50] ({3*\i+3},-\i) rectangle node[n] {acc} +(1,.7);
				\node[left] at (2.8,{.35-\i}) {\raisebox{-.1em}{\includegraphics[width=.8em]{train-icon.pdf}} \i};
			}
			\draw[->] (2.8,-4.2) -- +(13.6,0) node[n, below] {time};
		\end{tikzpicture}
		\caption{Simultaneous braking/accelerating of pairs of trains}
		\label{fig:example-basel-b}
	\end{subfigure}
	
	\caption{Timetable patterns of braking, waiting and acceleration phases of four trains, see \cite{atmos}}
	\label{fig:example-basel}
\end{figure}

While it is beneficial to the environment to use as much of the regenerative braking energy as possible, it is also of utmost importance to provide attractive public transportation to the passengers. This was formulated for periodic timetables as a bicriteria optimization problem by Wang, Zhu, and Corman~\cite{Wang2022} and Wang, Bešinović, Goverde, and Corman~\cite{Wang2022a}.

\paragraph*{Our contribution.}
We summarize our key contributions as follows:

\begin{enumerate}
	\item We propose a mathematical framework and a mixed-integer programming formulation for the bicriteria discrete optimization problem of finding a feasible periodic timetable maximizing the brake-traction overlap and minimizing the passengers' travel time on an arbitrary event-activity network (\pesppassenergy), extending the Periodic Event Scheduling Problem (\pesp).
    \item For timetables minimizing travel time (\pesppassenger) or maximizing the brake-traction overlap (\pespenergy) individually, we investigate the complexity status of different problem variants on networks modeling a single station. We show that the (classic) PESP-Passenger is already NP-hard in the case of one station. Moreover, we characterize the structure of optimal solutions and link them to matchings and Hamiltonian paths, resulting in a polynomial-time algorithm with additive performance guarantee. We further identify several classes of polynomially solvable cases.
    \item On two case studies, one being a major hub of the Berlin S-Bahn network, we demonstrate that our bicriteria model is able to handle realistic scenarios, and we compute and analyze the Pareto front.
\end{enumerate}

This paper is based on a previous conference proceedings paper~\cite{atmos} for which it significantly extends the complexity analysis and algorithms. The Berlin S-Bahn case study is new.

\paragraph*{Structure of the paper.}
We begin with a literature review in \cref{sec-literature}. Our bicriteria periodic timetabling problem \pesppassenergy is described in \cref{sec-prob-descr}, along with our modeling of the brake-traction overlap, and our mixed-integer programming formulation. In \cref{sec-compl-overview}, we define one-station networks and provide an overview of our computational complexity results, depending on the choice of objective, the structure of possible transfers, and restrictions on train dwelling times. The following sections are devoted to understanding the structure of optimal solutions: 
We focus first on minimizing the total travel time of the passengers in a one-station network in \cref{sec:one-transfer}. For maximizing the brake-traction overlap, we relate our setting to bipartite matchings, Hamiltonian paths, and the Gilmore-Gomory problem of sequencing a one state-variable machine in \cref{sec:pesp-energy-one-station}. This allows us to clarify the complexity status of the problem for many situations. For the remaining case of unrestricted dwelling activities on a complete one-station network, we present polynomially solvable cases for special choices of acceleration and braking times in \cref{sec-Special-Cases}, while \cref{sec-Variants} deals with restrictions on the number of cycles. We present our two case studies in \cref{sec:numerics}, before concluding the paper in \Cref{sec:outlook}.

\section{Literature}
\label{sec-literature}

The task of designing efficient railway timetables by means of the Periodic Event Scheduling Problem (PESP) has been subject to study at least since the seminal work by Serafini and Ukovich in 1989~\cite{Serafini1989}. An overview can be found in Liebchen's dissertation~\cite{Liebchen2006}, and more recent approaches are illustrated in, e.g., \cite{borndorfer_concurrent_2020,grafe_et_al:OASIcs.ATMOS.2021.9,bortoletto_tropical_2022}. Traditionally, the literature on timetabling has focused on minimizing the passengers' travel time. More recently, however, the growing importance of energy efficiency has led to significant research attention on reducing energy consumption, particularly within the engineering sciences. A comprehensive review of these works goes far beyond the scope of this paper. Instead, we only mention the most relevant papers in our context and refer to the survey by Scheepmaker, Goverde, and Kroon~\cite{Scheepmaker2017} and the exemplary recent papers~\cite{Mo2020,Liao2021,
Huang2025}, which contain more extensive literature reviews.

There are two ways in which the timetable can affect the trains' energy consumption. On the one hand, there is the idea of saving energy by the implementation of energy-efficient driving strategies, see, e.g., Howlett, Milroy, and Pudney~\cite{Howlett1994}. These depend on the time scheduled for each driving section; typically longer travel times require less energy. Ghoseiri, Szidarovszky, and Asgharpour~\cite{Ghoseiri2004} considered a biobjective train scheduling model, combining the objectives of minimizing energy and minimizing travel time, and approximate the Pareto frontier using the $\varepsilon$-constraint method. 

On the other hand, the timetable can influence the usage of regenerative energy in train systems. This was first researched by Ramos, Pena, Fernández, and Cucala~\cite{Ramos2008}, who allow a modification of the dwell times to increase the brake-traction overlap. A more detailed modeling of the energy consumption that combines the driving strategies and the brake-traction overlaps has been studied by Yin, Yang, Tang, Gao, and Ran~\cite{Yin2017}, who devised a Lagrangian relaxation-based heuristic for this problem. In contrast, Gupta, Tobin, and Pavel~\cite{Gupta2016} considered a very simplified linear programming model to synchronize the start times of braking phases and the end times of acceleration phases. Bärmann, Gemander, Martin, and Merkert~\cite{barmann_energy-efficient_2021} report on implementing an energy-efficient timetable by optimizing both speed profiles and brake-traction overlaps for the Nuremberg subway.
Finally, Sun, Yao, Dong, and Clarke~\cite{Sun2023a} proposed a method to adjust speed profiles during operation in order to achieve brake-traction overlaps even in the case of delays.

The bicriteria problem of minimizing the passenger travel times and maximizing the brake-traction overlap has been investigated by Yang, Ning, Li, and Tang~\cite{Yang2014}, who developed a genetic algorithm. Moreover, Yang, Liao, Wu, Timmermans, Sun, and Gao~\cite{Yang2020} applied the NSGA-II algorithm for approximating the Pareto frontier. Gong, Luan, Yang, Qi, and Corman~\cite{gong2024} presented a non-linear integer programming formulation and an adaptive large-neighborhood search for the bicriteria problem integrating rolling stock circulations. %

All these works are based on a given aperiodic timetable that can be modified. Only recently, the study of the periodic version of this problem has been initiated by Wang, Zhu, and Corman~\cite{Wang2022a}. They assume a given nominal periodic timetable and develop a model that can be used for local adjustments. On the one hand, their aim is to maximize the brake-traction overlap to enable the usage of regenerative energy on a fixed set of synchronized arrival and departure events. On the other hand, they include passenger-related objectives such as the minimization of the generalized average travel time of all passengers and the minimization of the maximum individual increase in generalized  travel time. Wang et al.\ also provide a visualization of the Pareto frontier for these objectives on an instance of Dutch railways.

\section{Problem Description}
\label{sec-prob-descr}

We consider an extension of the Periodic Event Scheduling Problem (PESP), whose basic notions are recalled in \Cref{ss:prob-input}, including our construction of event-activity networks. We introduce our model of brake-traction overlaps in \Cref{ss:overlap}. This allows us to define the baseline problem of this paper, \emph{\pesppassenergy}, as a bicriteria optimization problem combining periodic timetabling and matching of braking and accelerating trains in \Cref{ss:pesp-passenergy}. Finally, we present a mixed-integer programming formulation in \Cref{ss:bicriteria-mip}.

\subsection{Preliminaries}
\label{ss:prob-input}

In the timetabling problem, we are given a public transport network and a set of lines $\ell \in \cL$, which are defined as sequences of served stations~$v \in \mathcal V$. Every line is operated periodically with a given period~$T$. Hence, we search for a periodic timetable that assigns an integer number (time) between $0$ and $T-1$ to each departure and each arrival of a line at a station. This timetable needs to respect the duration of activities such as travel times between stations, dwell times at stations, and transfer times for passengers. In contrast to the usual setting, e.g., in \cite{Liebchen2006}, we also consider \emph{energy} transfers between a braking and an accelerating train.

The standard model used for such periodic timetabling problems is the periodic event scheduling problem (\pesp) developed in \cite{Serafini1989}, which is based on an \emph{event-activity-network} (EAN) $\cE = (E, A)$, whose nodes are periodically repeated \emph{events} and whose arcs are \emph{activities} that take place between events. For timetabling, the network is derived from a set of directed lines~$\cL$ serving stations $v \in \mathcal{V}$, see for example~\cite{nac1996,odi1996}.  The node set is given by arrival and departure events $E = E_{\arr} \dotcup E_{\dep}$, constructed from the lines and stations as follows:
\[
E_{\arr} \coloneqq \bigl\{ (v, \ell, \arr) \bigm| \ell \in \cL\text{ arrives at }v \in \mathcal{V}\bigr\},\ 
E_{\dep} \coloneqq \bigl\{ (v, \ell, \dep) \bigm| \ell \in \cL\text{ departs at }v \in \mathcal{V}\bigr\}.
\]
The activities $A \coloneqq A_\drive \dotcup A_\wait \dotcup A_\transfer\dotcup \possenergyact$ connect the events as follows:
\begin{align*}
	A_\drive  &\coloneqq \bigl\{ ((v_1, \ell, \dep), (v_2, \ell , \arr)) \in E_\dep \times E_\arr \bigm| \ell \text{ serves }v_2 \text{ directly after } v_1\bigr\}, \\
	A_\wait  &\coloneqq \bigl\{ ((v, \ell, \arr), (v, \ell , \dep)) \in E_\arr \times E_\dep \bigr\}, \\
	A_\transfer &\subseteq \bigl\{ ((v, \ell_1, \arr), (v, \ell_2 , \dep)) \in E_\arr \times E_\dep \bigm| \ell_1 \neq \ell_2 \bigr\},\\
	\possenergyact &\subseteq \bigl\{ ((v, \ell_1, \dep), (v, \ell_2 , \arr)) \in E_{\dep} \times E_{\arr}\bigr\}.
\end{align*}

\begin{figure}
    \centering

\begin{tikzpicture}[->, shorten >=1pt, scale=0.28]
			\tikzset{labelnode/.style={sloped, above, font=\scriptsize}}
            \tikzset{labelnoderight/.style={right, font=\scriptsize}}
            \tikzset{labelnodeleft/.style={left, font=\scriptsize}}
			\filldraw[station] (-0.2,-1.2) rectangle ++ (14.4,8.5);
			\filldraw[station] (24.8,-1.2) rectangle ++ (14.4,8.5);
			
			\node[Arr] (1) at (2,6) {};
			\node[Dep] (2) at (12,6) {};
			
			\node[Dep] (3) at (2, 0){};
			\node[Arr] (4) at (12,0) {};
			
			\node[Arr] (5) at (27,6) {};
			\node[Dep] (6) at (37,6) {};
			
			\node[Dep] (7) at (27,0) {};
			\node[Arr] (8) at (37,0) {};
			
			\node[] (9) at (-5, 6) {};
            \node[] (10) at (-5, 0) {};

            \node[] (11) at (44, 6) {};
            \node[] (12) at (44, 0) {};

            \node[red,font=\scriptsize] (13) at (7, 3) {energy};
            \node[red,font=\scriptsize] (14) at (32, 3) {energy};

			\node[font=\scriptsize, anchor=east] at (1.3, 9){};
			\node[font=\scriptsize, anchor=west] at (12.7, 9){};
			
			\draw (1) edge node[labelnode] {wait} (2); 
			\draw (4) edge node[labelnode] {wait} (3); 
			
			\draw (5) edge node[labelnode] {wait} (6);
            \draw (8) edge node[labelnode] {wait} (7);
			

            \draw (4)  edge[ bend angle=45, bend right] node[labelnoderight] {\;transfer} (2);
            \draw (1)  edge[ bend angle=45, bend right] node[labelnodeleft] {transfer\;} (3);Arrival
            \draw (5)  edge[ bend angle=45, bend right] node[labelnodeleft] {transfer\;} (7);
            \draw (8)  edge[ bend angle=45, bend right] node[labelnoderight] {\;transfer} (6);
            

            \draw (2)  edge[red, bend angle=45, bend right]  (4);
            \draw (3)  edge[ red, bend angle=45, bend right] (1);
            \draw (7)  edge[red, bend angle=45, bend right]  (5);
            \draw (6)  edge[ red, bend angle=45, bend right] (8);

			\draw (2) edge node[labelnode] {drive} (5); 
			\draw (7) edge node[labelnode] {drive} (4); 

            \draw (9) edge node[labelnode] {drive} (1);		
            \draw (3) edge node[labelnode] {drive} (10);

            \draw (6) edge node[labelnode] {drive} (11);		
            \draw (12) edge node[labelnode] {drive} (8);
			
		\end{tikzpicture}
     \caption{Event-activity network with energy activities}
    \label{fig:EAN}
\end{figure}

\cref{fig:EAN} depicts an EAN. Stations are represented by gray boxes. The white nodes correspond to the arrival events and the blue nodes correspond to the departure events at a station. The standard activities of the PESP are depicted by black arrows, whereas the newly introduced energy activities are highlighted by red arrows.

All activities impose precedence constraints, ensuring the correct (cyclic) order of the events. For $A_\drive$, $A_\wait$, and $A_\transfer$, this has been used in numerous works in the literature. The energy activities from $\possenergyact$ follow the same reasoning (see \cref{fig:energy_arc}): At a station~$v$, a braking train of line~$\ell_2$ can only transfer energy to an accelerating train of line~$\ell_1$ if the departure of $\ell_1$ takes place shortly before the arrival of $\ell_2$, i.e., the event $j = (v, \ell_1, \dep)$ precedes $i = (v,\ell_2,\arr)$. The energy activity $((v,\ell_1,\dep), (v,\ell_2,\arr))$ hence ensures that energy can be transferred from $\ell_2$ to $\ell_1$.

A \emph{periodic timetable} $\pi \colon E \to \{0,\dotsc,T-1\}$ assigns a time~$\pi_i$ to each event~$i \in E$, meaning that the event takes place at all times from $\pi_i + T\Z$. We impose bounds on the driving, transfer and waiting times: For each activity~$a \in A$ let $\Delta_a  = [l_a, u_a]$ be the interval of allowed durations with $l_a, u_a \in \Z$. Since we only determine the times modulo $T$, we can ignore multiples of $T$ in the activity durations and therefore assume that $0 \leq l_a \leq T-1$ and $0 \leq u_a - l_a \leq  T - 1$. Then a timetable is \emph{feasible} if the \emph{periodic tensions} $x_{a} \coloneqq (\pi_j - \pi_i - l_{a}) \bmod T + l_{a}$ lie within the provided bounds for all $a = ij \in A$. If $u_a - l_a = T-1$, then this is always the case and we call the activity \emph{free}.

For real-world applications it is desirable to find timetables that enable the usage of regenerative energy as well as short travel times for the passengers. Hence, it is necessary to consider a bicriteria problem maximizing the usage of regenerative energy, i.e., overlap of brake and traction phases, as well as minimizing the passengers' travel time so that we can study Pareto optimal solutions to find a good trade-off.
In order to properly express the objective functions under consideration, we need to be given the following parameters: First, we need the number of passengers $w_a$ traveling on each activity $a \in A_{\wait} \dotcup A_{\drive} \dotcup A_{\transfer}$  To simplify notation, we set $w_a=0$ for all $a \in \possenergyact$. 
Second, the time~$t^{\br}_i$ needed for braking at each arrival event~$i \in E_{\arr}$ and the time~$t^{\ac}_j$ needed for accelerating at each departure event~$j \in E_{\dep}$ have to be given (cf.~\cref{fig:energy_arc}). We assume that any acceleration and braking times together are smaller than the period time~$T$, i.e., $t^{\ac}_j + t^{\br}_i < T$ for any $ji \in \possenergyact$. 

\begin{figure}
		\centering
		\begin{tikzpicture}[font={\small}, xscale=1.3]
			\fill[station] (-.5,-.5) rectangle +(1,3);
			\node[Dep, label={[label distance=2ex]left:departure event $j$}] (d) at (0,2) {$\pi_j$};
			\node[Arr, label={[label distance=2ex]left:arrival event $i$}] (a) at (0,0) {$\pi_i$};
			\draw[energy] (d) -- node[left, outer sep=4ex] {energy arc $ji$}  node[right] {$x_{ji}$} (a);
			\draw[accelerate] (d) -- node[above] {accelerating} node[below] {$t_j^\ac$} +(2.6,0);
			\draw[brake] (2.6,0) -- node[above] {braking} node[below] {$t_i^\br$} +(a);
			\node[label={below:\vphantom{$t_j^{\br}$}}] at (0,0) {};
		\end{tikzpicture}
		\caption{Energy arc with periodic timetable, tension, acceleration time, and braking time, see \cite{atmos}. Note that the energy arc goes in opposite direction to the energy flow.}
		\label{fig:energy_arc}
\end{figure}

 In this paper, we assume $u_a = l_a + T - 1$ for the bounds on the transfer arcs~$a$, meaning that these activities are free. The periodic tensions assigned to the transfer activities, however, count into the objective function of minimizing the passengers' travel time. Similarly, we assume the bounds on the energy arcs~$a \in \possenergyact$ to be given by $l_a = 0$ and $u_a = T - 1$. Hence, energy activities are free and do not impose any feasibility constraints, either. However, they are required to measure the overlap of the braking and the acceleration phases of two trains, as modeled in the subsequent section.

\subsection{Modeling the Brake-Traction Overlap}
\label{ss:overlap}

Let us consider one energy activity $ji \in A_{\energy }$. We denote the minimum of the acceleration and braking times associated with energy arc $ji$ by $t_{ji}^{\min} \coloneqq \min \{ t_{j}^{\ac}, t_{i}^{\br} \} $ and the maximum by $t_{ji}^{\max} \coloneqq \max \{ t_{j}^{\ac}, t_{i}^{\br} \} $.
As for the definition of a timetable above, we represent all periodically repeating times by their occurrence in the interval $[0,T)$. With this convention, and in contrast to \cite{Serafini1989}, we define a periodic interval as
\[
[a, b]_T \coloneqq
\begin{cases}
	[a \bmod T, b \bmod T] &\text{if } a \bmod T \leq b \bmod T,\\
	[0, b \bmod T] \cup[ a \bmod T, T) &\text{else.}
\end{cases}
\]
The length of a periodic interval is then $\length([a, b]_T) = (b-a) \bmod T$. 
The periodic interval of the acceleration phase after the departure event~$j$ is then $[\pi_j, \pi_j + t_j^{\ac}]_T$. Analogously, $[\pi_i - t_i^{\br}, \pi_i]_T$ describes the braking phase before the arrival event~$i$. Due to the assumption that $t^{\ac}_j + t^{\br}_i < T$, the intersection of both intervals is again a periodic interval, whose length equals the overlap of the braking and acceleration phases:

\begin{definition}[Brake-Traction Overlap]\label{def:overlap}
	For $ji \in \possenergyact$
	we define the \emph{(brake-traction) overlap} resulting from a periodic timetable~$\pi$ as 
	$o_{ji} \coloneqq \length\bigl([\pi_j, \pi_j + t_j^{\ac}]_T \cap [\pi_i - t_i^{\br}, \pi_i]_T\bigr)$.
\end{definition}

Clearly, the overlap does not depend on the exact times $\pi_j$ and $\pi_i$ but only on their difference, i.e., on the periodic tension $x_{ji} =(\pi_i - \pi_j) \bmod{T}$. 

\begin{lemma} \label{lem:brake-traction-overlap}
    Let $\pi$ be a periodic timetable. The overlap on an energy activity~$a \in \possenergyact$ with periodic tension~$x_a$ is $o_a = \max\bigl\{ \min \{ x_a,\, t^{\min}_a,\, t^{\max}_a + t^{\min}_a - x_a\},\ 0\bigr\}$.
\end{lemma}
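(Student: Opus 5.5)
Since the overlap depends only on $x_a$ (as remarked after \cref{def:overlap}), I would first shift time so that $\pi_j = 0$ and $\pi_i = x_a \in \{0,\dots,T-1\}$. Writing $a = ji$, the acceleration interval is then $[0, t_j^{\ac}]$. This lies inside $[0,T)$ without wrap-around, because $t_j^{\ac} < T$. The braking interval is $[x_a - t_i^{\br}, x_a]_T$. I would use the assumption $t_j^{\ac} + t_i^{\br} < T$ to reduce everything to an intersection of ordinary real intervals, so that the overlap is a length of the form $\max\{\min(\text{right ends}) - \max(\text{left ends}), 0\}$.

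\textbf{Case 1: $x_a \ge t_i^{\br}$.} The braking interval is the ordinary interval $[x_a - t_i^{\br}, x_a] \subseteq [0,T)$, and the intersection is
\[
[\max\{0, x_a - t_i^{\br}\}, \min\{t_j^{\ac}, x_a\}].
\]
Its length is $\max\{\min\{t_j^{\ac}, x_a\} - \max\{0, x_a - t_i^{\br}\}, 0\}$. Expanding the difference of a min and a max gives
\[
\min\{t_j^{\ac},\ x_a,\ t_j^{\ac} + t_i^{\br} - x_a,\ t_i^{\br}\}.
\]
Since $t_j^{\ac} + t_i^{\br} = t_a^{\max} + t_a^{\min}$ and $\min\{t_j^{\ac}, t_i^{\br}\} = t_a^{\min}$, this is exactly the claimed formula.

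\textbf{Case 2: $x_a < t_i^{\br}$.} The braking interval wraps around as $[0, x_a] \cup [x_a - t_i^{\br} + T, T)$. The first piece meets $[0, t_j^{\ac}]$ in $[0, \min\{x_a, t_j^{\ac}\}]$. For the second piece, $x_a - t_i^{\br} + T > T - t_i^{\br} > t_j^{\ac}$ by the standing assumption, so it is disjoint from the acceleration interval. Hence the overlap is $\min\{x_a, t_j^{\ac}\}$.

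It remains to check that the formula agrees in this case. Since $x_a < t_i^{\br}$, we have $x_a \le t_a^{\min}$ whenever $t_a^{\min} = t_i^{\br}$. Also $t_j^{\ac} + t_i^{\br} - x_a > t_j^{\ac}$. So the three-term minimum collapses to $\min\{x_a, t_j^{\ac}\}$, which is nonnegative, and the outer $\max\{\cdot, 0\}$ is inactive. The same reasoning handles the boundary point $x_a = 0$, where the overlap is $0$.

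\textbf{Main obstacle.} The only delicate point is the wrap-around in Case 2: one must verify that the wrapped piece cannot reach the acceleration phase. This is precisely where $t_j^{\ac} + t_i^{\br} < T$ is used. The rest is the routine min/max identity $\min\{b, d\} - \max\{a, c\} = \min\{b - a,\ b - c,\ d - a,\ d - c\}$.
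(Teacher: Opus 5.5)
Your proof is correct. It reaches the same four-term minimum as the paper, but handles the periodicity differently.

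The paper works directly with periodic intervals:
\begin{enumerate}
\item It first shows that the intersection is empty exactly when $x_{ji} > t_j^{\ac}+t_i^{\br}$, splitting on whether $\pi_j \le \pi_i$.
\item For a non-empty intersection, it reads off from the four configurations in its figure that the overlap is $\min\{t_j^{\ac}, t_i^{\br}, x_{ji}, (t_j^{\ac}+t_i^{\br}-x_{ji}) \bmod T\}$.
\item It then drops the $\bmod T$ using non-emptiness.
\end{enumerate}

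You instead normalize to $\pi_j=0$ and split on whether the braking interval wraps around $0$. In the non-wrapping case, the four lengths $t_j^{\ac}, t_i^{\br}, x_a, t_j^{\ac}+t_i^{\br}-x_a$ are exactly $b-a$, $d-c$, $d-a$, $b-c$ in your identity $\min\{b,d\}-\max\{a,c\}=\min\{b-a,b-c,d-a,d-c\}$. The outer $\max\{\cdot,0\}$ is then just the usual intersection-length formula, so no separate emptiness analysis is needed. In the wrapping case, the assumption $t_j^{\ac}+t_i^{\br}<T$ enters at exactly one visible point.

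What each route buys: your version is fully algebraic, whereas the paper's min-of-four step rests on the picture. The paper's version stays symmetric in the two events and gives the emptiness criterion explicitly.

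Two cosmetic points:
\begin{itemize}
\item At $x_a=0$, the inequality $x_a-t_i^{\br}+T > T-t_i^{\br}$ holds only with $\ge$. This is harmless because the next inequality is strict.
\item The collapse in Case~2 is cleanest stated as $\min\{x_a,t_a^{\min}\}=\min\{x_a,t_j^{\ac}\}$, which holds because $x_a<t_i^{\br}$.
\end{itemize}
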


\begin{proof}
	Let $ a= ji$. There are two cases in which there is an empty intersection $[\pi_j, \pi_j + t_j^{\ac}]_T \cap [\pi_i - t_i^{\br}, \pi_i]_T$. First, the intersection is empty if $\pi_j \leq \pi_i$ and $\pi_j + t_j^{\ac} < \pi_i - t_i^{\br}$. This is the case whenever $t_j^{\ac} + t_i^{\br}< \pi_i - \pi_j = (\pi_i - \pi_j) \bmod T = x_{ji}$.
	The second case in which the intersection is empty is if  $\pi_j > \pi_i$ and $\pi_j + t_j^{\ac} < \pi_i + T - t_i^{\br}$. This is true whenever $t_j^{\ac} + t_i^{\br} < \pi_i + T - \pi_j = (\pi_i - \pi_j) \bmod T = x_{ji}$. Hence, we have an empty intersection if and only if $t_j^{\ac} + t_i^{\br} - x_{ji} < 0$. In this case the overlap is $o_{ji} = 0 $.
	
	Provided that the intersection is non-empty, we receive the length of the overlap by the minimum of the lengths of the four periodic intervals $[\pi_j, \pi_j + t_j^{\ac}]_T$, $ [\pi_i - t_i^{\br}, \pi_i]_T$, $[\pi_j, \pi_i]_T$, $[\pi_i - t_i^{\br}, \pi_j + t_j^{\ac} ]_T$. These four cases are illustrated in \cref{fig:cases-overlap}.
    \begin{figure}
     \centering
     \begin{tikzpicture}
      \foreach \i in {0,2} {
       \draw[ultra thick] (0,\i) -- ++(6,0) ++(2,0) -- +(6,0);
       \draw[energy, -] (2,{\i+.2}) -- +(2,0) ++ (8,0) -- +(2,0);
       }
     
      \draw[brake, -] (.5,2.5) -- +(5,0);
      \draw (.5,2.1) -- +(0,-.2) node[below, brake] {$\pi_i-t_i^{\br}$};
      \draw (5.5,2.1) -- +(0,-.2) node[below, brake] {$\pi_i$};
      \draw[accelerate, -] (2,2.7) -- +(2,0);
      \draw (2,2.1) -- +(0,-.2) node[below, accelerate] {$\pi_j$};
      \draw (4,2.1) -- +(0,-.2) node[below, accelerate] {$\pi_j+t_j^{\ac}$};

      \begin{scope}[xshift=8cm]
       \draw[brake, -] (2,2.5) -- +(2,0);
       \draw (2,2.1) -- +(0,-.2) node[below, brake] {$\pi_i-t_i^{\br}$};
       \draw (4,2.1) -- +(0,-.2) node[below, brake] {$\pi_i$};
       \draw[accelerate, -] (.5,2.7) -- +(5,0);
       \draw (.5,2.1) -- +(0,-.2) node[below, accelerate] {$\pi_j$};
       \draw (5.5,2.1) -- +(0,-.2) node[below, accelerate] {$\pi_j+t_j^{\ac}$};
      \end{scope}

      \draw[brake, -] (.5,.5) -- +(3.5,0);
      \draw (.5,.1) -- +(0,-.2) node[below, brake] {$\pi_i-t_i^{\br}$};
      \draw (4,.1) -- +(0,-.2) node[below, brake] {$\pi_i$};
      \draw[accelerate, -] (2,.7) -- +(3.5,0);
      \draw (2,.1) -- +(0,-.2) node[below, accelerate] {$\pi_j$};
      \draw (5.5,.1) -- +(0,-.2) node[below, accelerate] {$\pi_j+t_j^{\ac}$};

      \begin{scope}[xshift=8cm]
       \draw[brake, -] (2,.5) -- +(3.5,0);
       \draw (2,.1) -- +(0,-.2) node[below, brake] {$\pi_i-t_i^{\br}$};
       \draw (5.5,.1) -- +(0,-.2) node[below, brake] {$\pi_i$};
       \draw[accelerate, -] (.5,.7) -- +(3.5,0);
       \draw (.5,.1) -- +(0,-.2) node[below, accelerate] {$\pi_j$};
       \draw (4,.1) -- +(0,-.2) node[below, accelerate] {$\pi_j+t_j^{\ac}$};
      \end{scope}
     \end{tikzpicture}
     \caption{The four cases from the proof of \cref{lem:brake-traction-overlap}. The overlap of the braking and acceleration intervals is shown in red.}
     \label{fig:cases-overlap}
    \end{figure}
    This yields
	\begin{align*}
		o_{ji} &= \min \{t_j^{\ac},\, t_i^{\br},\, (\pi_i - \pi_j) \bmod T,\, (t_j^{\ac} + t_i^{\br} - (\pi_i - \pi_j) \bmod T) \bmod T \} \\
		&= \min \{t_j^{\ac},\, t_i^{\br},\, x_{ji},\, (t_j^{\ac} + t_i^{\br} - x_{ji}) \bmod T \} \\
		&= \min \{t_j^{\ac},\, t_i^{\br},\, x_{ji},\, t_j^{\ac} + t_i^{\br} - x_{ji} \}\\
		&= \min \{t_{ji}^{\min},\, x_{ji},\, t_{ji}^{\min} + t_{ji}^{\max} - x_{ji} \} \ge 0.
	\end{align*} 
	The third equation holds by the assumption that we have a non-empty intersection.	Therefore,
	\[\min\{t_{ji}^{\min},\, x_{ji},\, t_{ji}^{\min} + t_{ji}^{\max} - x_{ji}\} \ge 0 \iff [\pi_j, \pi_j + t_j^{\ac}]_T \cap [\pi_i - t_i^{\br}, \pi_i]_T \neq \emptyset.\]
	Hence, for the actual overlap of energy arc $a \in \possenergyact$ we obtain:
	\[o_{a} = \max\bigl\{ \min \{ x_a,\, t^{\min}_a,\, t^{\max}_a + t^{\min}_a - x_a\},\ 0\bigr\}. \qedhere\]
\end{proof}

\begin{figure}
    \centering
    \begin{tikzpicture}
        \draw[<->] (6.1,0) node[right] {$x_a$} -| (0,2.6) node[above] {$o_a$};
        \draw[blue, ultra thick] (0,0) -- (2,2) -- (3,2) -- (5,0) -- (5.9,0);
        \foreach \i/\l in {0/0, 2/{t_a^{\min}}, 3/{t_a^{\max}}, 5/{t_a^{\max} + t_a^{\min}}}
        \node[fill, circle, inner sep=0pt, minimum size=4pt, label={below:$\l$}] at (\i,0) {};
        \node[fill, circle, inner sep=0pt, minimum size=4pt, label={left:$t_a^{\min}$}] at (0,2) {};
    \end{tikzpicture}
    \caption{Brake-traction overlap~$o_a$ as a function of the periodic tension~$x_a$ on energy activity~$a$, see \cite{atmos}}
    \label{fig:overlap}
\end{figure}

\begin{corollary} \label{cor:full-overlap}
 The maximum possible overlap at $a \in \possenergyact$ is $o_{a} = t_a^{\min}$, which is achieved if and only if $t_a^{\min} \le x_a \le t_a^{\max}$.
\end{corollary}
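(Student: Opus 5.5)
The plan is to read the corollary directly off the closed-form expression in \cref{lem:brake-traction-overlap}:
\[
o_a = \max\bigl\{ \min \{ x_a,\, t^{\min}_a,\, t^{\max}_a + t^{\min}_a - x_a\},\ 0\bigr\}.
\]
The argument has three steps: an upper bound, attainment, and a characterization of exactly when the bound is attained.

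For the upper bound, the inner minimum is at most $t_a^{\min}$. We also have $t_a^{\min} \ge 0$, since acceleration and braking times are nonnegative. Hence $o_a \le \max\{t_a^{\min}, 0\} = t_a^{\min}$ for every tension $x_a$.

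For attainment, note that $t_a^{\min} \le t_a^{\max}$. The interval $[t_a^{\min}, t_a^{\max}]$ is therefore nonempty, and it lies inside $\{0,\dots,T-1\}$ because $t_a^{\max} \le t_j^{\ac}+t_i^{\br} < T$. So an admissible tension in this interval exists. This also matches the trapezoid shape in \cref{fig:overlap}.

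For the equivalence, first take $t_a^{\min} \le x_a \le t_a^{\max}$. Then $x_a \ge t_a^{\min}$, and $t_a^{\max}+t_a^{\min}-x_a \ge t_a^{\min}$. So the inner minimum equals $t_a^{\min} \ge 0$, and $o_a = t_a^{\min}$. Conversely, suppose $o_a = t_a^{\min}$. I split into two cases.

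\begin{itemize}
\item If $t_a^{\min} > 0$, the outer maximum must be attained by the inner minimum. That forces $x_a \ge t_a^{\min}$ and $t_a^{\max}+t_a^{\min}-x_a \ge t_a^{\min}$, that is, $x_a \le t_a^{\max}$.
\item If $t_a^{\min}=0$, every tension yields overlap $0 = t_a^{\min}$. The "only if" direction then fails for $x_a > t_a^{\max}$, so this case needs care.
\end{itemize}

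The degenerate case $t_a^{\min}=0$ is the only real obstacle. I would resolve it by assuming positive acceleration and braking times, which is implicit in the model since braking and acceleration phases take time. Alternatively, one can restrict the "only if" claim to $t_a^{\min} > 0$ and note that it is vacuous otherwise. With that caveat stated, the rest is routine case checking on the piecewise-linear formula.
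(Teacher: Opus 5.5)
Your proof is correct and takes essentially the paper's route. The paper states this corollary without a separate proof, as an immediate reading of the piecewise-linear formula in \cref{lem:brake-traction-overlap} (cf.\ \cref{fig:overlap}). The degenerate case $t_a^{\min}=0$ that worries you cannot occur: the input specification in \cref{tab:prob-in-out} explicitly requires $t^{\ac}>0$ and $t^{\br}>0$. So $t_a^{\min}>0$ is a standing hypothesis of the model, not an extra assumption you need to add.
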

In this case, we say that there is \emph{full overlap} on $a$.

\subsection{The \pesppassenergy Problem}
\label{ss:pesp-passenergy}

\begin{table}[ht]
	\centering
	\renewcommand{\arraystretch}{1.2}
	\begin{tabular}{ | p{0.15 \linewidth} | p{0.7 \linewidth}|}
		\hline
		\multicolumn{2}{|c|}{\textbf{Bicriteria Problem \textnormal{\emph{\pesppassenergy}}  } }  \\ \hline
		\textbf{Input:} & $\bullet$ $\cE = (E,A)$, an \textbf{event-activity-network}\\
        & \hspace{5pt} with $A = A_\drive \dotcup  A_\wait \dotcup A_\transfer \dotcup \possenergyact$ as in \Cref{ss:prob-input}\\
		&$\bullet$ $T \in \mathbb{N}$, a \textbf{period time}\\
		&$\bullet$ $l \in \Z^{A}$ with $0\leq l < T$, a \textbf{lower bound} vector\\
        & \hspace{5pt} with $l_a = 0$ for all $a \in \possenergyact$ \\
		&$\bullet$ $u \in \Z^{A}$ with $l\leq u < T+l$, an \textbf{upper bound} vector\\ 
        & \hspace{5pt} with $u_a = T-1$ for all $a \in \possenergyact$ \\
        & \hspace{5pt} and $u_a = l_a + T - 1$ for all $a \in A_\transfer$\\
		&$\bullet$ $w \in \R^{A}$ with $w \geq 0$, a \textbf{weight} vector (number of passengers) \\ 
        & \hspace{5pt} with $w_a = 0$ for all $a \in \possenergyact$ \\
		&$\bullet$ $t^{\ac} \in \Z^{E_{\dep}}$ with $t^{\ac} > 0$, an \textbf{acceleration time} vector\\
		&$\bullet$ $t^{\br} \in \Z^{E_{\arr}}$ with $t^{\br} > 0$, a \textbf{braking time} vector\\
        & \hspace{5pt} with $t_j^\ac + t_i^\br < T$ for all $ji \in \possenergyact$ \\
        \hline
		
		\textbf{Output:} 
        &$\bullet$ $\energyact \subseteq \possenergyact$, a \textbf{matching of selected energy activities} \\
        &$\bullet$ $\pi \in \{0,1, \dots , T-1 \}^{E}$, a feasible \textbf{periodic timetable} \\
		&$\bullet$  $x \in \Z^{A}$, \textbf{periodic tensions} with $l\leq x \leq u$ and \\
		& \hspace{5pt} $x_a = (\pi_{j}-\pi_{i}-l_{a})
        \bmod T + l_{a}$ for all $a = ij \in A$ \\
		& $\bullet$ $o\in \Z^{\energyact}$ \textbf{overlap} of brake and traction phases (\Cref{def:overlap})\\
		& $\bullet$  minimizing $\sum_{a \in A} w_a x_a$ \\
		& $\bullet$ maximizing $ \sum_{a \in \energyact} o_a$\\ 
		 \hline
	\end{tabular}
    \caption{Input and output for \pesppassenergy}
	\label{tab:prob-in-out}
\end{table}

Having presented the input data in \Cref{ss:prob-input} and having discussed an expression for quantifying the brake-traction overlap in \Cref{ss:overlap}, we can now introduce the central bicriteria optimization problem of this paper. In contrast to previous work~\cite{Wang2022a,Wang2022}, we do not assume that the set of energy activities between arrivals and departures is given in advance. Instead, in our problem, it has to be selected during the optimization. More specifically, the given set $A_{\energy}$ contains all \emph{possible} energy activities, and the task is to select a subset $\energyact \subseteq \possenergyact$ of \emph{actual} energy activities, which can be used to transfer energy during a brake-traction overlap phase. We assume that the selected subset must form a matching from arrival to departure events, i.e., no braking train can transfer its energy to more than one accelerating train, and no accelerating train can receive energy from multiple braking trains.

\begin{definition}[\pesppassenergy]
    Consider a tuple $(\cE, T, l, u, w, t^{\ac}, t^{\br})$ as described in \Cref{tab:prob-in-out}, where $\cE = (E, A)$. The goal is to select a matching $\energyact \subseteq \possenergyact$ and a feasible periodic timetable $\pi \in [0, T)^E$ with associated periodic tension $x \in \R^A$ and overlap $o \in \R^{\energyact}$ such that $\sum_{a \in A} w_a x_a$ is minimized and $\sum_{a \in \energyact} o_a$ is maximized.
\end{definition}

\Cref{tab:prob-in-out} gives an overview of the problem's input and output.
The problem \emph{\pesppassenergy} therefore integrates the task of periodic timetabling with finding an optimal matching of energy arcs, enabling energy transmission from braking to accelerating trains, and accounting for the fact that energy can only be reused once.

\subsection{MIP Formulation of the Bicriteria Problem}
\label{ss:bicriteria-mip}

In this section, we present a new MIP formulation of \pesppassenergy, which combines constraints from the PESP model from, e.g., \cite{Liebchen2006} with overlap constraints similar to \cite[Constraints~\mbox{[9]}]{Ramos2008} and matching constraints.

\setcounter{equation}{0}
\begin{align}
	& \max &  & \sum_{ji \in A_{\text{energy}}} o_{ji} & \label{mip:obj1} \\[5 pt] 
	&\min
	& &\sum_{ij \in A}w_{ij} x_{ij} \label{mip:obj2} \\[5pt]
	& \text{subject to}
	& x_{ij} &= \pi_{j}-\pi_{i}+p_{ij}T && \forall ij \in A \label{mip:pesp1} \\
	&& l_{ij}&\leq x_{ij}\leq u_{ij} && \forall  ij \in A\label{mip:pesp2} \\
	&& 0 &\leq \pi_{i} \leq T-1 & & \forall  i \in E \label{mip:pesp3}\\[5pt] 
	& & o_{ji} & \leq x_{ji} & &\forall {ji} \in A_{\text{energy}} \label{mip:energy2}\\
	& &o_{ji}  &\leq \alpha_{ji} \cdot t^{\min}_{ji} & &\forall {ji} \in A_{\text{energy}} \label{mip:energy1}\\
	& & o_{ji} &\leq t^{\max}_{ji} + t^{\min}_{ji} - x_{ji} + (1- \alpha_{ji}) \Gamma & &\forall {ji} \in A_{\text{energy}} \label{mip:energy3}\\[5 pt]
	& & \sum_{ji \in A_{\text{energy}} \cap \delta^-(i)} \alpha_{ji} &\leq 1 & & \forall i \in E_{\text{arr}}  \label{mip:energy5}\\
	& & \sum_{ji \in A_{\text{energy}} \cap \delta^+(j)} \alpha_{ji} &\leq 1 & & \forall j \in E_{\text{dep}} \label{mip:energy6}\\ 
	&& o_{ji} &\ge 0,\ \alpha_{ji} \in \{0,1\}& &  \forall ji \in A_{\text{energy}} \label{mip:energy7} \\
	&& x_{ij} &\ge 0,\ p_{ij} \in \Z & & \forall ij \in A \label{mip:pesp4} \\ 
	&& \pi_i &\in \Z && \forall i \in E \label{mip:pesp5}
\end{align}

The variables $p_{ij}$ are called periodic offsets or modulo parameters and are chosen such that the periodic tensions~$x_{ij}$ lie within the bounds. This is ensured by constraints \labelcref{mip:pesp1,mip:pesp2}. Constraints \labelcref{mip:pesp3,mip:pesp5} ensure that the timetable $\pi$ takes only values within $\{0,\dotsc,T-1\}$.

The constraints \eqref{mip:energy5} and \eqref{mip:energy6} ensure that the energy arcs chosen at each station form a matching. The variables $\alpha_{ji}$ define a matching~$\energyact$ with $ji \in \energyact$ if and only if $\alpha_{ji} = 1$.

The variable~$o_{ji}$ determines the brake-traction overlap for the energy arcs in $\energyact$. If $ji \notin \energyact$, then constraints~\eqref{mip:energy1} force $o_{ji}$ to be zero, while constraints~\labelcref{mip:energy2,mip:energy3} do not impose an additional restriction. Note that it is not necessary to add $(1-\alpha_{ji}) \Gamma$ to constraints~\eqref{mip:energy2} because $x_{ji} \ge 0$ due to the periodic shifting. For $ji \in \energyact$ the constraints \labelcrefrange{mip:energy2}{mip:energy3} bound the overlap according to \cref{lem:brake-traction-overlap}. The constant~$\Gamma$ is chosen large enough so that for $\alpha_{ji} = 0$ the constraints \labelcref{mip:energy3} do not impose a relevant bound on $o_{ji}$. It can be set to 
\[\Gamma \coloneqq \max\bigl\{T - (t^{\max}_{ji} + t^{\min}_{ji}) \bigm| ji \in A_{\energy}\bigr\}.\]

Wang et al.~\cite{Wang2022a} introduced an equivalent model for the overlap, which, however, requires more variables and constraints. The equivalence of both models has been shown in \cite{atmos}.

We denote the problem with the single objective of minimizing the passengers' travel time~\eqref{mip:obj2} by \emph{\pesppassenger}. This is the classic problem studied, e.g., in \cite{nac1996,Liebchen2006} that consists of the constraints \eqref{mip:pesp1}--\eqref{mip:pesp3} and  \eqref{mip:pesp4}--\eqref{mip:pesp5}. The problem with the single objective of maximizing the brake-traction overlap~\eqref{mip:obj1}, and therefore the usage of regenerative energy, is denoted by \emph{\pespenergy}, and consists of the constraints \eqref{mip:pesp1}--\eqref{mip:pesp5}.

\section{Complexity Overview}
\label{sec-compl-overview}

Even without an objective function, the Periodic Event Scheduling Problem (\pesp), which consists of deciding whether a feasible timetable exists, is NP-complete for any fixed $T\geq 3$ \cite{odijk1994}. When all activities are free, so that feasibility is guaranteed, finding a periodic timetable with minimum total travel time is NP-hard~\cite{Nachtigall1993,LindnerReisch22}.
Therefore, also the problem of deciding whether there is a feasible timetable with a total travel time of the passengers smaller than some constant $K_1$ and a total overlap of brake and traction phases that is bigger than some constant $K_2$ is NP-complete.

To obtain a better understanding, we investigate the complexity of \emph{\pesppassenergy} on a class of simple but meaningful instances, and for each individual objective.

\subsection{One-Station Networks}

From the application context, it makes sense to consider energy transmission locally at a single station.
Our upcoming complexity investigations are based on a small network of a single transfer station called a \emph{\osn}. Among others, we will show that the (classic) \pesppassenger is already NP-hard on one-station networks.


\begin{definition}[One-Station Network]\label{osnetwork}
	An EAN $\cE_n =(E, A)$ is called a \emph{\osn} with $n$ lines if it is based on one station, i.e., $|\mathcal{V}| = 1$, and $n$ (directed) lines stopping at this station inducing the following events:
	\begin{align*}
		E_{\arr} &\coloneqq \bigl\{ ( \ell, \arr) \bigm| \ell \in [n] \bigr\}, 
		& E_{\dep} &\coloneqq \bigl\{ (\ell, \dep) \bigm|  \ell \in [n] \bigr\}.
	\end{align*}
	The activities $A =  A_\wait \dotcup A_\transfer \dotcup \possenergyact$ connect the events as follows:	
	\begin{align*}
		A_\wait  &\coloneqq \bigl\{ (( \ell, \arr), ( \ell, \dep)) \in E_\arr \times E_\dep \bigr\}, \\
		A_\transfer &\subseteq \bigl\{ ((\ell_1, \arr), (\ell_2 , \dep)) \in E_\arr \times E_\dep \bigm| \ell_1 \neq \ell_2 \bigr\}, \\
		A_{\energy}  &\subseteq \bigl\{ ((\ell_1, \dep), (\ell_2, \arr)) \in E_{\dep} \times E_{\arr} \bigr\}
		.
	\end{align*}
\end{definition}

 There are no driving activities in a \osn. W.l.o.g.\ we use the following simplification: A \osn~$\cE_n^{\mathrm{pass}}$ for \pesppassenger has the arcs $A_\wait \cup A_{\transfer}$, while for \pespenergy, we only consider waiting activities~$A_\wait$ and possible energy activities~$A_\energy$ in the \osn~$\cE_n^{\energy}$. This is a valid reduction, as the activity bounds on both energy and transfer arcs do not impose any restrictions on feasibility (see \Cref{ss:prob-input}), and energy arcs are irrelevant for the objective function of \pesppassenger, as transfer arcs are for \pespenergy.

\subsection{Complexity Results}

\Cref{Tab:complexity} gives an overview on our results concerning the problems' complexity on a \osn. Here, we distinguish between a \osn denoted by $K_{n,n}$ 
that contains all possible transfer activities (\pesppassenger) or energy activities (\pespenergy), respectively, and a more general \osn denoted by $\mathcal E_n \subseteq K_{n,n}$, where an arbitrary set of transfer or possible energy activities is given as input. 
Further, we make a distinction whether arbitrary bounds on the waiting activities can be specified, or all waiting activities are free.

\begin{table}
\centering	

\begin{tabular}{|l|l|l|l|} 
	\hline
		\multicolumn{2}{|c|}{One-Station Network }  & 	\multicolumn{2}{|c|}{ Objective } \\  \hline
	graph & bounds on $A_{\wait}$ & min travel time  & max energy overlap \\  \hline	
	$\mathcal E_n \subseteq K_{n, n}$ & arbitrary & NP-hard (A) & NP-hard (E) \\
	\multirow{2}{*}{$K_{n,n}$} & \multirow{2}{*}{arbitrary} & \multirow{2}{*}{NP-hard (B)} & ? -- polynomial special cases, \\ 
		& & & \hphantom{? -- }NP-hard restrictions (F)\\	
	$\mathcal E_n \subseteq K_{n, n}$ & free & NP-hard (C) & polynomial (G) \\
	$K_{n,n}$ & free & NP-hard (D) & polynomial (H) \\
	  \hline

\end{tabular} 	
\caption{Complexity overview for one-station networks}\label{Tab:complexity} 
\end{table}

As shown in \cref{Tab:complexity}, the problem of minimizing the travel time on $\cE_n^{\mathrm{pass}}$ is NP-hard in all considered cases. The proof of case~(D) uses a reduction from the Maximum Cut Problem and can be found in \cref{sec:one-transfer} (\cref{NPhardPassengers}). As (A), (B), and (C) contain (D) as a special case, this implies the NP-hardness of (A), (B), (C) as well.

For the problem \pespenergy, we obtain a wider range of theoretical insights and complexity results. 
For the most general case (E), we show the NP-hardness in \cref{ss:pesp-energy-complexity} by a reduction from the Directed Hamiltonian Path Problem (\Cref{thm:pesp-energy-subset-hardness}).
On the other hand, when the waiting activities are free (cases (G) and (H)), an optimal solution can be found in polynomial time (\Cref{thm:pesp-energy-complexity-free-waiting}).

For \pespenergy on a \osn with arbitrary bounds on the waiting activities in which every pair of departure and arrival can be matched by an energy activity (case (F)) it remains open whether the problem is NP-hard or solvable in polynomial time. In \cref{sec:pesp-energy-one-station}, however, we provide a polynomial algorithm with an additive performance guarantee (\Cref{thm:boundH}). In \cref{sec-Special-Cases}, we show that some special cases are solvable in polynomial time, namely when the period time is large, when the acceleration or braking times are uniform, or when each train has the same acceleration and braking time.
On the other hand, we investigate restricted versions of (F), where the selected energy activities together with the waiting activities $\energyact \dotcup A_{\wait}$ should form exactly one cycle or at least two cycles in \cref{sec-Variants}. While the first variant is solvable in polynomial time (\Cref{thm:pesp-energy-single-cycle}), the second one is NP-hard (\Cref{thm:pesp-energy-two-cycles}).

We finally remark that the corresponding variants of the problem of deciding whether there is a periodic timetable with travel time at most $K_1$ and overlap at least $K_2$ are NP-hard, because all travel time variants are NP-hard (cf.\ \Cref{Tab:complexity}).

\section{\pesppassenger on a One-Station Network} \label{sec:one-transfer}

In this section, we will consider \pesppassenger minimizing the passengers' travel time, i.e., the ``classic'' PESP.  We strengthen the results of \cite{Nachtigall1993} and \cite{odijk1994} by showing that the problem of finding a timetable minimizing the total transfer time is already hard on one-station networks where all waiting activities are present but do not restrict feasibility.

\begin{theorem} \label{NPhardPassengers}
	The problem \pesppassenger is NP-hard on a \osn, even if all possible transfer activities are present, the period time is fixed to $T = 2$, and $u_a = l_a + 1$ for all $a \in A_{\wait}$.
\end{theorem}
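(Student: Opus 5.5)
The plan is a polynomial reduction from the NP-hard Maximum Cut Problem: given an undirected graph $G=(V,E)$ and an integer $k$, decide whether $G$ has a cut with at least $k$ edges. The key observation is that with $T=2$ and $u_a=l_a+1=l_a+T-1$, every waiting activity is free, just like every transfer activity. Hence every timetable $\pi\colon E\to\{0,1\}$ is feasible, and the whole difficulty sits in the objective. With period~$2$, a periodic tension only records whether its two endpoint events receive equal or different parities. Max Cut is exactly such a parity problem, which makes it the natural source.

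\textbf{Construction.} Build the one-station network $\cE_n^{\mathrm{pass}}$ with one line per vertex $v\in V$, so $n=|V|$, together with all transfer activities $((v,\arr),(w,\dep))$ for $v\neq w$, so that the network is complete.
\begin{itemize}
\item For each edge $\{v,w\}\in E$, fix an arbitrary orientation $(v,w)$. Give the transfer activity $((v,\arr),(w,\dep))$ lower bound $l=1$, upper bound $u=2$ and weight~$1$.
\item Give all other transfer activities weight $0$ and arbitrary bounds $l=0$, $u=1$.
\item Give each waiting activity $((v,\arr),(v,\dep))$ bounds $l=0$, $u=1$ and weight $M\coloneqq |E|+1$.
\end{itemize}

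\textbf{Cost of a timetable.} For a waiting activity, the tension is $x=(\pi_{(v,\dep)}-\pi_{(v,\arr)})\bmod 2$, so it costs $M$ exactly when the arrival and departure parities of line~$v$ differ. For an edge transfer with $l=1$, the tension is $(\pi_{(w,\dep)}-\pi_{(v,\arr)}-1)\bmod 2+1$. This equals $1$ if the two parities differ and $2$ if they agree.

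\textbf{Consistent timetables.} Call $\pi$ consistent if $\pi_{(v,\dep)}=\pi_{(v,\arr)}$ for all $v$, and write $S\coloneqq\{v:\pi_{(v,\arr)}=1\}$. A consistent timetable has total cost
\[
\sum_{\{v,w\}\in E}\bigl(2-[\text{edge } \{v,w\} \text{ is cut by } S]\bigr)=2|E|-|\delta(S)|.
\]
Conversely, every $S\subseteq V$ yields a consistent timetable with exactly this cost.

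\textbf{Main step: optimal timetables are consistent.} Take any timetable $\pi$ and, for every line with differing parities, reset $\pi_{(v,\dep)}\coloneqq\pi_{(v,\arr)}$. Each such line saves $M=|E|+1$ on its waiting activity. The total transfer cost of any timetable lies between $|E|$ and $2|E|$, so all transfer arcs together can grow by at most $|E|<M$. The cost therefore strictly decreases whenever $\pi$ was inconsistent, and every optimal timetable is consistent.

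\textbf{Conclusion.} It follows that $G$ has a cut of size at least $k$ if and only if the constructed instance admits a timetable of total travel time at most $2|E|-k$. The construction is polynomial, so the problem is NP-hard.

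The only points needing care are the following.
\begin{itemize}
\item Bounds: all $l_a\in\{0,1\}\le T-1$ and all $u_a-l_a=1=T-1$, as the model requires.
\item Weights: zero weights are allowed by \Cref{tab:prob-in-out}. If strictly positive weights were preferred, the non-edge transfers could instead receive weight $1$ with $l=0$. Their cost then also depends only on parities, and it can be absorbed by reducing from weighted Max Cut; I would avoid this by keeping weight $0$.
\item Waiting bounds: the hypothesis $u_a=l_a+1$ on waiting arcs is used only to make them free, and the waiting weight $M$ is what does the work.
\end{itemize}
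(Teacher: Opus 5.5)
Your proposal is correct and follows essentially the same route as the paper: a reduction from Max-Cut with $T=2$, where free waiting arcs carry a prohibitive weight that forces $\pi_{(v,\dep)}=\pi_{(v,\arr)}$ in every optimal timetable, and transfer arcs with $l=1$, $u=2$ have tension $1$ exactly on cut edges. The differences are cosmetic: you reduce from unweighted Max-Cut in its decision form, weight only one orientation of each edge (the paper weights both), and argue consistency for all lines at once rather than line by line.
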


\begin{proof}
	We show NP-hardness by a reduction from the Max-Cut problem. Let $I$ be an arbitrary Max-Cut instance, consisting of a graph~$G=(V,R)$ and a weight function~$\upsilon \colon R \to \R$. We search a bipartition $(S, T)$ of the vertex set $V$ such that the sum of the weights on the edges between the sets $S$ and $T$, $\sum_{\{i, j\} \in R: i \in S, j \in T} \upsilon(\{i,j \})$, is maximum.
	
	Based on $I$ we define an instance~$I'$ of the \pesppassenger problem on a \osn: Let $\cE_n^{\mathrm{pass}} = (E_{\arr} \dotcup E_{\dep},\, A_{\wait} \dotcup A_{\transfer})$ be a \osn with $n \coloneqq |V|$ lines, inducing $n$ arrival events and $n$ departure events, and $A_{\wait} \dotcup A_{\transfer} = E_{\arr} \times E_{\dep}$. This means that $\cE_n^{\mathrm{pass}}$ is the complete bipartite graph $K_{n,n}$ with all edges directed from $E_\arr$ to $E_\dep$. We consider the period time $T \coloneqq 2$. The waiting activities $a \in A_{\wait}$ have $l_a \coloneqq 0$ and $u_a \coloneqq 1$, and their weights are $w_a \coloneqq \sum_{r \in R} \upsilon(r) + 1$. For the transfer activities $((i,\arr),(j,\dep)) \in A_{\transfer}$, define the bounds $l_{(i,\arr),(j,\dep)} \coloneqq 1$ and $u_{(i,\arr),(j,\dep)} \coloneqq 2$ and weights
	\[
	w_{(i,\arr),(j,\dep)} \coloneqq
	\begin{cases}
		\upsilon(\{i,j\}) &\text{if } \{i,j\} \in R, \\
		0 &\text{else.}
	\end{cases}
	\]
	
	We want to show that any optimal solution to $I'$ can be transformed to an optimal solution of $I$. 
	Let $\pi$ be an optimal timetable for $I'$. Then for every $i \in V$ we have $\pi_{(i,\dep)} = \pi_{(i,\arr)}$: Suppose, to the contrary, that there exists some $i \in V$ with $\pi_{(i,\dep)} = \pi_{(i,\arr)} + 1$. Then the waiting arc $((i,\arr),(i,\dep))$ contributes $\sum_{r \in R} \upsilon(r) + 1$ to the total travel time. By moving the departure time earlier to $\pi_{(i,\arr)}$, these costs vanish, while the transfer times from each other line to line~$i$ may increase by at most $1$. However, this increase of transfer time is clearly bounded by $\sum_{r \in R} \upsilon(r)$. Thus, the timetable~$\pi$ cannot minimize the total travel time.
    
    We define $S \coloneqq \{i \in V \mid \pi_{(i,\arr)} = \pi_{(i,\dep)} = 0\}$ and $T \coloneqq \{i \in V \mid \pi_{(i,\arr)} = \pi_{(i,\dep)} = 1\}$.
	To see that $(S,T)$ is an optimal solution to the Max-Cut problem,
    note that $\pi$ minimizes the sum of the weights multiplied with the periodic tensions in $I'$. As every transfer arc has periodic tension~$1$ or $2$, this is the same as maximizing the sum of the weights of arcs with periodic tension $1$, which are exactly those between the sets $S$ and $T$. Hence, $(S,T)$ is a maximum-weight cut.
\end{proof}

\begin{remark}
It is well-known that finding a feasible timetable in the sense of PESP on arbitrary EANs is NP-hard for any fixed period time $T$ with $T \geq 3$ \cite{odijk1994}. This result does not carry over to $T = 2$, as any instance can be preprocessed in such a way that every activity $a$ is either fixed ($l_a = u_a$) or free ($u_a - l_a = T - 1 = 1$). As fixed arcs can always be contracted (cf.\ \cite{Liebchen2006}), no feasibility restrictions remain: Any PESP instance with $T = 2$ is trivially feasible. \Cref{NPhardPassengers} shows however that the hardness persists when asking for an optimal timetable. It is not surprising that simple networks suffice: It has been shown in \cite{Borndoerfer2020} that the separation of a maximally violated cycle or change-cycle inequality for \pesppassenger is already NP-hard on a star network with turnaround loops.
\end{remark}

Now we establish a special case in which we know the structure of an optimal solution. 

\begin{definition}[Basel Solution Structure]
	A timetable~$\pi$ for a one-station network~$\cE_n^{\mathrm{pass}}$ has a \emph{Basel solution structure} if all arrival events are scheduled at the same time~$\pi^\arr$ and all departure events at time~$\pi^\dep$ such that 
    $$(\pi^{\dep} - \pi^\arr) \bmod T = l^{\max} \coloneqq \max \{l_{a} \mid a \in A\}.$$
\end{definition}

The name ``Basel solution structure'' has been chosen since such a structure occurs in integrated interval-fixed interval timetables as they are, e.g., common in Basel, Switzerland. In these timetables, all trains arrive and depart roughly at the same time, respectively, as is also illustrated in \cref{fig:example-basel-a}. The following result confirms that such a solution is highly attractive for passengers.

\begin{proposition}\label{thm:BaselSol}
	Let $\cE_n^{\mathrm{pass}} = (E, A)$ be a \osn with lower and upper bounds $l_{a}, u_{a}$ on the arcs such that $u_{a} = l_{a} + T - 1$ for all transfer arcs $a \in A_{\transfer}$. Then any timetable~$\pi$ with the Basel solution structure minimizes the total travel time independently of the weights if and only if
	$l_{a} = l_{a'}$ for all $a, a' \in A_\transfer \cup A_\wait$.
\end{proposition}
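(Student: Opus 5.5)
We prove the two directions separately, starting from the observation that the objective is bounded below pointwise by $\sum_{a} w_a l_a$, because every tension satisfies $x_a \ge l_a$ and all weights are nonnegative. A timetable therefore minimizes the total travel time for every choice of weights exactly when every tension attains its lower bound, that is, $x_a = l_a$ for all $a \in A_\transfer \cup A_\wait$.

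\emph{Sufficiency.} Suppose $l_a = l_{a'}$ for all $a,a' \in A_\transfer \cup A_\wait$, and call this common value $L$. We assume, as in a meaningful instance, that $A = A_\wait \cup A_\transfer$ is nonempty, so that $l^{\max} = L$. In a Basel timetable every arc $a = ij$ runs from an arrival to a departure, so $(\pi_j - \pi_i) \bmod T = (\pi^\dep - \pi^\arr) \bmod T = L$. Since $0 \le L \le T-1$ and $x_a = (\pi_j-\pi_i-l_a) \bmod T + l_a$, we get $x_a = (L - L)\bmod T + L = L = l_a$. Feasibility holds because $l_a \le x_a \le u_a$. Every tension is at its lower bound, so by the observation above the timetable is optimal for all weights.

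\emph{Necessity.} Suppose some Basel timetable $\pi$ is optimal for all weights, and suppose for contradiction that there is an arc $a$ with $l_a < l^{\max}$. For every arc $a=ij$ we have $(\pi_j - \pi_i - l_a) \bmod T = (l^{\max} - l_a) \bmod T$. Both $l^{\max}$ and $l_a$ lie in $[0,T-1]$, so this equals $l^{\max} - l_a$, and hence $x_a = l^{\max}$. Thus every arc receives tension exactly $l^{\max}$.

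Now choose weights $w_a = 1$ on an arc with $l_a < l^{\max}$ and $w = 0$ elsewhere. The Basel timetable has cost $l^{\max}$, so it suffices to exhibit a feasible timetable with $x_a = l_a$. Write $a = ij$ and set $\pi_j = \pi_i + l_a$. The remaining question is whether the other events can be placed so that all other constraints hold.

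\begin{itemize}
\item Transfer arcs are free, so they impose nothing.
\item Each waiting arc connects the arrival and departure of the same line. In a one-station network without driving arcs, these pairs are disjoint components apart from the free arcs.
\item If $a$ is a waiting arc, set its tension to $l_a$; the other waiting arcs can each independently be set to their lower bounds.
\item If $a = ((\ell_1,\arr),(\ell_2,\dep))$ is a transfer arc, the events of line $\ell_1$ and of line $\ell_2$ lie in different waiting components. Fix $\pi$ on line $\ell_1$ with its waiting arc at its lower bound, then shift the whole component of line $\ell_2$ so that $\pi_{(\ell_2,\dep)} = \pi_{(\ell_1,\arr)} + l_a \bmod T$.
\end{itemize}

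In either case all waiting constraints stay satisfied, the new timetable has cost $l_a < l^{\max}$, and the Basel timetable is not optimal for these weights, a contradiction. Hence all $l_a$ are equal.

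The only real obstacle in this plan is the independence argument in the last step: we must check that fixing one arc's tension can be done without violating the bounds on the waiting arcs. This holds because the only non-free arcs are the waiting arcs, and they form a matching.
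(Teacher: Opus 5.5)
Your proposal is correct and follows essentially the same route as the paper: the lower-bound argument for sufficiency, and for necessity a weight vector that is $1$ on a single arc $a$ with $l_a < l^{\max}$ and $0$ elsewhere, together with a timetable realizing $x_a = l_a$. You also check two points the paper only asserts: that every Basel tension equals $l^{\max}$ when the bounds differ, and that the alternative timetable is feasible because the waiting arcs form a matching and the transfer arcs are free.
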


\begin{proof}
	First, let us assume that $l_{a} = l_{a'}$ for all $a, a' \in A_\transfer \cup A_\wait$. Let $\pi$ be a timetable with the Basel solution structure. Then the periodic tensions induced by $\pi$ are 
    $$x_{ij} = (\pi_j - \pi_i -l_{ij}) \bmod T + l_{ij} = (l^{\max} - l^{\max} ) \bmod T + l^{\max} =  l^{\max}$$ for all $ij \in A_\transfer \cup A_\wait$. As we cannot do better than attaining the lower bounds on the tensions, $\pi$ must be optimal.
	
	Let us now assume that $\pi$ is optimal and that there is an arc $a' \in A_\transfer \cup A_\wait$ with $l_{a'}  < l^{\max}$. In the following we find a weight vector~$w$ for which $\pi$ is not optimal. Let $a' = i'j'$. Then the following timetable $\pi'$ achieves a lower objective value than $\pi$ for the following weight vector~$w$:
	\[w_{ij} \coloneqq \begin{cases} 1 &\text{if } ij = i'j', \\ 0 &\text{else,}\end{cases}\qquad\quad \pi_i' \coloneqq \begin{cases} 0 &\text{if } i = i', \\  l_{i'j'} &\text{if } i = j', \\ \text{arbitrary feasible values} &\text{else.} \end{cases}\]
	This is possible as only the waiting activities impose feasibility constraints.
	The weighted sum of the periodic tensions w.r.t.\ $\pi$ is $l^{\max}$, and it is $l_{a'}$ w.r.t.\ $\pi'$. By assumption, $l_{a'} < l^{\max}$, hence $\pi$ is not optimal.
\end{proof}

While a timetable with the Basel solution as in \Cref{thm:BaselSol} is optimal for \pesppassenger, the overlap of the energy activities is forced to be small (see also \cref{fig:example-basel}), as typically transfer, acceleration and braking times are relatively short:

\begin{proposition}
    Let $\cE_n = (E, A)$ be a one-station network and let $\pi$ be a timetable with the Basel solution structure. If $l^{\max} + \max\{t^\ac_{j} + t^\br_{i} \mid ji \in A_\energy\} < T$, then any solution to \pespenergy using $\pi$ has overlap $0$.
\end{proposition}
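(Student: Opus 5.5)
The plan is to use \cref{lem:brake-traction-overlap} directly: the overlap on any energy activity depends only on its periodic tension, so it suffices to compute the tension of every possible energy activity under a Basel timetable. We then show that each such tension lies in the region where the formula returns $0$.

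First, fix a timetable $\pi$ with the Basel solution structure. All arrivals occur at $\pi^\arr$, all departures at $\pi^\dep$, and $(\pi^\dep-\pi^\arr)\bmod T=l^{\max}$. An energy activity $ji\in A_\energy$ goes from a departure $j$ to an arrival $i$. Since $l_{ji}=0$, its tension is
\[
x_{ji}=(\pi_i-\pi_j)\bmod T=(\pi^\arr-\pi^\dep)\bmod T=(-l^{\max})\bmod T .
\]
We have $0\le l^{\max}\le T-1$. If $l^{\max}>0$, this gives $x_{ji}=T-l^{\max}$. If $l^{\max}=0$, then $x_{ji}=0$.

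Second, plug this tension into \cref{lem:brake-traction-overlap}, which gives $o_{ji}=\max\{\min\{x_{ji},t^{\min}_{ji},t^{\max}_{ji}+t^{\min}_{ji}-x_{ji}\},0\}$. Note that $t^{\max}_{ji}+t^{\min}_{ji}=t^\ac_j+t^\br_i$.
\begin{itemize}
\item If $l^{\max}=0$, the first argument of the minimum is $x_{ji}=0$, so $o_{ji}=0$.
\item If $l^{\max}>0$, the hypothesis gives $t^\ac_j+t^\br_i<T-l^{\max}=x_{ji}$. Hence the third argument of the minimum is negative and again $o_{ji}=0$.
\end{itemize}

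Finally, every energy activity has overlap $0$. So for any matching $\energyact\subseteq A_\energy$ selected together with $\pi$, the total overlap $\sum_{a\in\energyact}o_a$ is $0$.

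The only subtle point is the modular arithmetic in the first step. The tension must be computed in the energy arc's direction, from departure to arrival, which gives $T-l^{\max}$ rather than $l^{\max}$. The degenerate case $l^{\max}=0$ must be handled separately, as above. Everything else is a direct substitution into the lemma.
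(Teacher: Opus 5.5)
Your proof is correct and follows the same route as the paper: compute the tension of every energy arc under a Basel timetable, which is $T - l^{\max}$, and then conclude $o_{ji}=0$ from \cref{lem:brake-traction-overlap}. Your separate treatment of $l^{\max}=0$ adds a little care the paper's one-line version skips. In that case the periodic tension is $0$, not $T$, and the lemma still gives zero overlap.
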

\begin{proof}
    Let $(M, \pi, x, o)$ be a solution to \pespenergy with $\pi$ having the Basel solution structure. For $ji \in A_\energy$ then holds $x_{ji} = T  - l^{\max}$, which by assumption is strictly larger than $t^\ac_{j} + t^\br_{i}$. By \Cref{lem:brake-traction-overlap}, we find $o_{ji} = 0$.
\end{proof}

\section{\pespenergy on a One-Station Network} \label{sec:pesp-energy-one-station}

Similar to the previous section where \pesppassenger on a \osn was analyzed with respect to its complexity and the structure of an optimal solution has been described for a simple special case, we now investigate the same questions for \pespenergy on a  \osn. We distinguish between the two cases where an arbitrary set~$\possenergyact$ of of event pairs is given between which an energy activity is possible and where an energy activity can be selected between every departure and arrival. While, in the first case, we are able to show the problem's NP-hardness, in the second case, we provide a polynomial-time algorithm with additive performance bound. 

We start with a simple upper bound for the objective value of the maximization of the brake-traction overlap on an arbitrary network.

\begin{proposition}\label{prop:upperbound-matching}
	For an instance of \pespenergy on an EAN $\cE = (E, A)$, consider a feasible solution $S= (\energyact, \pi, x, o)$. Let $W$ be the maximum weight of a (perfect) matching in the graph $G=(E, \possenergyact)$ with respect to the weights~$\w_a \coloneqq t^{\min}_a$ for all $a \in \possenergyact$. Then $\sum_{a \in \energyact} o_a \leq W$.
\end{proposition}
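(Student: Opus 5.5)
The plan is to bound each selected overlap individually by its maximum possible value and then use that the selected arcs form a matching. By \cref{cor:full-overlap}, or directly from \cref{lem:brake-traction-overlap}, every energy activity $a \in \possenergyact$ satisfies $o_a = \max\bigl\{\min\{x_a, t^{\min}_a, t^{\max}_a + t^{\min}_a - x_a\}, 0\bigr\} \le t^{\min}_a = \w_a$, because $t^{\min}_a > 0$ and so the outer maximum never exceeds $t^{\min}_a$. Summing over the selected arcs gives
\[
\sum_{a \in \energyact} o_a \le \sum_{a \in \energyact} \w_a .
\]

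Next, I use that $\energyact \subseteq \possenergyact$ is by definition of a solution a matching in $G = (E, \possenergyact)$. Hence $\sum_{a \in \energyact} \w_a$ is the weight of some matching of $G$. It is therefore at most the maximum weight $W$ of a matching with respect to $\w$.

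The only subtlety is the parenthetical ``(perfect)''. If $W$ is read as the maximum over perfect matchings only, the comparison needs care, since $\energyact$ need not be perfect. I would handle this by noting that all weights are nonnegative, so a maximum-weight matching can be taken inclusion-maximal. In the intended one-station setting, where $G$ is complete bipartite between departures and arrivals, any matching extends to a perfect one without decreasing the weight, so the two maxima coincide. For a general $\possenergyact$ without a perfect matching, the statement should be read with arbitrary matchings, and then the argument above is complete. This interpretive point is the only obstacle; the rest is the two-line chain of inequalities above.
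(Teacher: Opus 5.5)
Your proposal is correct and is essentially the paper's proof: bound each selected overlap by $t^{\min}_a$ (the paper just notes $o_{ji} \le t^{\ac}_j$ and $o_{ji} \le t^{\br}_i$), then use that $\energyact$ is a matching in $(E, \possenergyact)$. Your caveat about ``(perfect)'' is a fair point the paper glosses over, and it applies more broadly than you state. For a restricted $\possenergyact$, the maximum over perfect matchings can be strictly smaller than the maximum over all matchings even when a perfect matching exists. So outside the complete bipartite one-station setting, the bound should indeed be read with arbitrary matchings.
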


\begin{proof}
	Each overlap is bounded from above by both the corresponding acceleration and the corresponding braking time: $o_{ji} \leq t_{j}^{\ac}$ and $o_{ji} \leq t_{i}^{\br}$.
\end{proof}

The problem can be divided into two stages: first, select a matching of the energy arcs, i.e., determine which trains can transfer their energy to which trains, and second, find a timetable for the selected energy activities. Of course, the timetable affects what constitutes a good matching in the first stage. We first study the second stage of finding a timetable for a given matching.

\subsection{The Timetable for a Given Matching of Energy Activities}

In this subsection, we investigate for a \osn how to obtain an optimal timetable for a given matching of energy activities, and how close its achieved overlap is to the weight of the matching.
The input~$\cE_n^{\energy}$ is a bipartite graph with partition classes $E_\arr$ and $E_\dep$. The set of waiting activities constitutes a perfect matching from $E_\arr$ to $E_\dep$. These activities impose the only feasibility constraints on the timetable~$\pi$. In contrast, the energy activities solely influence the objective value. Hence, there are no restrictions between arrival and departure times of different lines.

For any matching~$\energyact \subseteq \possenergyact = E_\dep \times E_\arr$, the graph $(E,\,\energyact \dotcup A_\wait)$ is a union of node-disjoint directed cycles and directed paths. By \cref{cor:full-overlap}, we achieve full overlap on a cycle~$C$ if and only if the periodic tension~$x$ satisfies $t_{a}^{\min} \le x_a \le t_a^{\max}$ for all energy activities $a \in C \cap \energyact$. On the other hand, for any feasible periodic tension, it has to hold that $l_a \le x_a \le u_a$ for all $a \in A_{\wait}$ and $\sum_{a \in C} x_a \in T \Z$.

\begin{definition}
    \label{def:delta}
    For a matching $\energyact \subseteq \possenergyact$ and a cycle $C \subseteq \energyact \dotcup A_\wait$, we define the distance from the interval that allows full overlap to the nearest multiple of the period time by
	\[ \delta_C \coloneqq d([L_C, U_C], T\Z) = \min \{|x - \gamma T| \mid x \in [L_C, U_C],\ \gamma \in \Z\},
    \]
    where $L_C \coloneqq \sum_{a \in C \cap A_{\wait}} l_a + \sum_{a \in C \cap \energyact} t_a^{\min}$ and $U_C \coloneqq \sum_{a \in C \cap A_{\wait}} u_a + \sum_{a \in C \cap \energyact} t_a^{\max}$.
\end{definition}

A timetable with maximum brake-traction overlap on the selected energy activities can be determined for each connected component of $(E, \energyact \dotcup A_\wait)$ separately. The following \lcnamecref{thm:struc-cycle} describes the structure of an optimal timetable for a directed cycle. Recall that $\w_a = t_a^{\min}$ for $a \in \possenergyact$. For a set~$B \subseteq A$ of activities, we write $\w(B) \coloneqq \sum_{a \in B \cap \possenergyact} \w_a$ and $o(B) \coloneqq \sum_{a \in S \cap \possenergyact} o_a$.

\begin{theorem}\label{thm:struc-cycle}
Let $\energyact \subseteq \possenergyact$ be a matching, let $C \subseteq \energyact \dotcup A_\wait$ be a cycle, and let $a_0 \in C$ with $\w_{a_0} \leq \w_a$ for all $a \in C \cap \energyact$. Then there is a solution $(\energyact, \pi, x, o)$ to \pespenergy such that 
\begin{enumerate}[\normalfont (a)]
    \item $o_a = \w_a$ for all $a \in C \cap \energyact \setminus \{a_0\}$,
    \item $o(C) = \w(C) - \min\{t^{\min}_{a_0},\, \delta_C\}$,
    \item $o(C)$ is maximum among all solutions that fix the energy activities~$\energyact$.
\end{enumerate}
\end{theorem}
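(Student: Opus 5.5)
We work with tensions instead of timetables. On a single directed cycle $C \subseteq \energyact \dotcup A_\wait$, any choice of integer tensions $x_a$ with $l_a \le x_a \le u_a$ on waiting arcs, $0 \le x_a \le T-1$ on energy arcs, and $\sum_{a\in C} x_a \in T\Z$ can be realized by a timetable. We fix $\pi$ at one event and propagate along the cycle; the closing condition is exactly $\sum_{a \in C} x_a \equiv 0 \pmod T$. Since the events of $C$ are touched by no other arcs of $\energyact \dotcup A_\wait$, this choice does not interfere with other components. So the problem on $C$ becomes: choose such tensions maximizing $\sum_{a \in C\cap\energyact} o_a(x_a)$, where $o_a$ is the trapezoid function of \cref{lem:brake-traction-overlap}.

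\textbf{Construction (achievability of (a) and (b)).} If $\delta_C = 0$, pick $\gamma T \in [L_C,U_C]$. Distribute it as $x_a \in [l_a,u_a]$ on waiting arcs and $x_a \in [t^{\min}_a, t^{\max}_a]$ on energy arcs, which is possible by integrality and monotone filling. Then \cref{cor:full-overlap} gives full overlap everywhere, so $o(C) = \w(C)$.

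If $\delta_C > 0$, then $[L_C,U_C]$ lies strictly between two consecutive multiples $\gamma T < L_C \le U_C < (\gamma+1)T$, and $\delta_C = \min\{L_C - \gamma T,\,(\gamma+1)T - U_C\}$.
\begin{itemize}
\item If the lower distance is attained, set every arc at its lower value ($l_a$ or $t^{\min}_a$), except $a_0$, whose tension is decreased by $\delta_C$. If $\delta_C \ge t^{\min}_{a_0}$, we instead set $x_{a_0}$ to a value in $[0, t^{\min}_{a_0}]$ or beyond $t^{\max}+t^{\min}$ with overlap $0$, which forces a check below.
\item If the upper distance is attained, symmetrically raise $a_0$ above $t^{\max}_{a_0}$ by $\delta_C$; the slope-$(-1)$ part of the trapezoid gives overlap $t^{\min}_{a_0} - \delta_C$.
\end{itemize}
When $\delta_C \ge t^{\min}_{a_0}$, we only need the sum to hit a multiple of $T$ with $o_{a_0}=0$, i.e.\ some $x_{a_0} \in [0,T-1]$ with $x_{a_0} \equiv -(\text{rest}) \pmod T$. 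Such a value always exists, so overlap $\w(C) - t^{\min}_{a_0}$ is achieved. If $a_0$ happens to be a waiting arc (the statement allows $a_0\in C$), I would read it as an energy arc of minimum weight. If $C$ has no energy arc, the claim is trivial. This gives (a) and (b).

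\textbf{Optimality (c), the main step.} Take any feasible tension and let $D_a = \w_a - o_a \ge 0$ be the deficit on energy arcs. We want $\sum_a D_a \ge \min\{t^{\min}_{a_0}, \delta_C\}$.
\begin{itemize}
\item If some $D_a \ge t^{\min}_a$, i.e.\ $o_a = 0$, the bound follows from $t^{\min}_a \ge t^{\min}_{a_0}$.
\item Otherwise every $o_a > 0$, so $x_a \in (0, t^{\max}_a + t^{\min}_a)$. The trapezoid then gives $x_a \ge t^{\min}_a - D_a$ and $x_a \le t^{\max}_a + D_a$. Summing with the waiting bounds yields $\sum_{a \in C} x_a \in [L_C - \sum D_a,\, U_C + \sum D_a]$. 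Since this sum is a multiple of $T$, the distance from $[L_C,U_C]$ to $T\Z$ is at most $\sum D_a$, so $\sum D_a \ge \delta_C$.
\end{itemize}
The subtle point is precisely this case split: the trapezoid is not concave over the whole range $[0,T)$. The bound $x_a \ge t^{\min}_a - D_a$ fails when $o_a = 0$ with $x_a$ large, which is why that case has to be absorbed by the $t^{\min}_{a_0}$ term.
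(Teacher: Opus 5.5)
Your proposal is correct and follows the paper's proof. The construction is the same: all arcs sit at their lower (resp.\ upper) values and $a_0$ absorbs $\delta_C$. The optimality argument is also the same, resting on the fact that the cycle sum lies in $T\Z$. Your case split on whether some energy arc has zero overlap makes rigorous the step the paper only asserts informally (``we lose an overlap of at least $\min\{t_{a_0}^{\min},\delta\}$''), since it handles the non-concavity of the trapezoid explicitly.

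One small point concerns the case $\delta_C \ge t^{\min}_{a_0}$. There an arbitrary closing value for $x_{a_0}$ only gives $o(C) \ge \w(C) - t^{\min}_{a_0}$, so equality in (b) needs your upper bound from (c). The paper instead checks directly that the forced tension on $a_0$ yields zero overlap.
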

\begin{proof}    
    Suppose first $\delta_C = 0$. Then there is a vector $x \in \mathbb R^C$ such that $l_a \leq x_a \leq u_a$ for all $a \in C \cap A_\wait$ and $t_a^{\min} \leq x_a \leq t_a^{\max}$ for all $a \in C \cap \energyact$ such that $x(C) = \sum_{a \in C} x_a $ is an integer multiple of $T$. In particular, $x$ is a periodic tension on $C$, and by \cref{cor:full-overlap}, we can realize full overlap $o_a = t^{\min}_a = \w_a$ on all energy activities in $C$.

    Now assume that $\delta_C > 0$. Without loss of generality, we assume that $\delta_C$ is the distance from $L_C$ to the next integer multiple of $T$, i.e., there is a $\gamma \in \mathbb Z$ such that $L_C - \delta_C = \gamma T$ and $U_C + \delta_C \leq \gamma T + T$. Define $x \in \mathbb R^C$ via
    $$
    x_a \coloneqq \begin{cases}
        l_a & \text{ if } a \in C \cap A_\wait,\\
        t_a^{\min} & \text{ if } a \in C \cap \energyact \setminus\{a_0\},\\
        (t_{a_0}^{\min} - \delta_C) \bmod T & \text{ if } a = a_0.
    \end{cases}
    $$
    Then $x(C) \equiv L_C - \delta_C \equiv \gamma T \equiv 0 \pmod T$. We infer that $x$ is a periodic tension on $C$ that defines full overlap $o_a = t^{\min}_a = \w_a$ on all $a \in C \cap \energyact \setminus \{a_0\}$. In the case that $\delta_C \leq t_{a_0}^{\min}$, we have $x_{a_0} = t_{a_0}^{\min} - \delta_C$ and therefore $o_{a_0} = t_{a_0}^{\min} - \delta_C$, so that
    $$ o(C) = \sum_{a \in C \cap \energyact \setminus \{a_0\}} \w_a + t_{a_0}^{\min} - \delta_C = \w(C) - \delta_C = \w(C) - \min\{t_{a_0}^{\min},\, \delta_C\}.$$    
    Otherwise, if $\delta_C > t_{a_0}^{\min}$, we have $x_a = t_{a_0}^{\min} - \delta_C + T$, as $0 < \delta_C < T$. Since then
    \begin{align*}
    x_{a_0} &=  t_{a_0}^{\min} - \delta_C + T \\
    &\geq t_{a_0}^{\min} + U_C - \gamma T \\
    &= t_{a_0}^{\min} + (U_C - L_C) + \delta_C \\
    &> t_{a_0}^{\min} + (t_{a_0}^{\max} - t_{a_0}^{\min}) + t_{a_0}^{\min} \\
    &= t_{a_0}^{\min} + t_{a_0}^{\max},
    \end{align*}
    we find $o_{a_0} = 0$. This yields
    $$ o(C) = \sum_{a \in C \cap \energyact \setminus \{a_0\}} \w_a = \w(C) - t_{a_0}^{\min} = \w(C) - \min\{t_{a_0}^{\min},\, \delta_C\}.$$  
    The argument when $\delta_C$ is the distance from $U_C$ to the nearest integer multiple of $T$, is analogous, so that we have established (a) and (b).

    It remains to show (c), i.e., that no feasible periodic tension can induce larger overlap on $C$. To this end, consider an optimal solution $(\energyact, \pi^*, x^*, o^*)$. Let 
    \[\delta^* \coloneqq d([L_C, U_C], x^*(C)) = \min \{|x-x^*(C)| \mid x \in [L_C, U_C]\}.\]
    Since $x^*(C)$ is an integer multiple of $T$, we have $\delta^* \geq \delta_C$. The periodic tension $x^*$ hence misses the full overlap interval $[L_C, U_C]$ by $\delta$. By definition of the overlap function (cf.~\cref{fig:overlap}), we lose an overlap of at least $\min\{t_{a_0}^{\min}, \delta\}$. We obtain
    $$o^*(C) \leq \w(C) - \min\{t_{a_0}^{\min}, \delta^*\} \leq \w(C) - \min\{t_{a_0}^{\min}, \delta_C\} = o(C),$$
    showing that the periodic tension $x$ constructed above realizes the maximum possible overlap on $C$.
\end{proof}

We can interpret directed paths that arise as components of $(E, \energyact \dotcup A_\wait)$ as cycles with a missing edge of overlap~0.
Hence, by \cref{thm:struc-cycle}, there is an optimal timetable for such a path such that all energy edges have full overlap.

\subsection{Complexity Results for \pespenergy}
\label{ss:pesp-energy-complexity}

We have seen that finding an optimal timetable for given energy activities is an easy task. The complexity of \pespenergy thus comes from the task of selecting the energy activities that admit an optimal timetable with a large brake-traction overlap. We start with the simple case where we have no restrictions on the waiting activities.

\begin{theorem}\label{thm:pesp-energy-complexity-free-waiting}
    Let $\cE_n^{\energy}$ be a \osn such that all waiting activities are free. Then a maximum-weight matching w.r.t.~$\w \coloneqq t^ {\min}$ yields an optimal solution to \pespenergy whose overlap equals the matching's weight. In particular, \pespenergy is solvable in polynomial time.
\end{theorem}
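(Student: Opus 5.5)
The plan is to combine the upper bound from \cref{prop:upperbound-matching} with a matching lower bound obtained by constructing a timetable explicitly. Let $\energyact$ be a maximum-weight matching in $G=(E,\possenergyact)$ with respect to $\w = t^{\min}$; by \cref{prop:upperbound-matching}, no feasible solution achieves total overlap larger than $W=\w(\energyact)$. It therefore suffices to exhibit a feasible timetable $\pi$ with $o_a = t_a^{\min}$ for every $a \in \energyact$. Since a maximum-weight bipartite matching is computable in polynomial time (e.g.\ by the Hungarian method), and the timetable construction below is clearly polynomial, the complexity claim follows immediately.

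For the construction, I would use \cref{thm:struc-cycle} componentwise on $(E,\energyact \dotcup A_\wait)$, which decomposes into node-disjoint directed cycles and paths. For a path component, as remarked after \cref{thm:struc-cycle}, full overlap on all energy arcs is achievable. A direct argument is also easy: walk along the path, set the tension of each energy arc to $t_a^{\min}$ and of each waiting arc to $l_a$, and define $\pi$ by accumulating tensions modulo $T$. No cycle condition needs to be checked.

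For a cycle component $C$, by \cref{thm:struc-cycle}(b) it suffices to show $\delta_C = 0$. Here the freeness of waiting activities enters: each waiting arc $a$ has $u_a - l_a = T-1$. Every cycle in $\energyact \dotcup A_\wait$ alternates between energy and waiting arcs, so it contains at least one waiting arc. Hence $U_C - L_C \ge T-1$. Because $L_C, U_C$ are integers, the interval $[L_C,U_C]$ then contains $T$ consecutive integers and thus a multiple of $T$, giving $\delta_C = 0$ and full overlap on all of $C \cap \energyact$.

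Gluing the componentwise timetables is unproblematic, since components share no events and, in $\cE_n^{\energy}$, only waiting arcs impose feasibility constraints. The resulting overlap is then $\w(\energyact) = W$, which meets the upper bound, so the solution is optimal. The only step requiring care is the observation that every cycle contains a waiting arc, so that the interval length is at least $T-1$. I expect no real obstacle beyond this.
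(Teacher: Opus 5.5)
Your proposal is correct and follows essentially the same route as the paper: the upper bound from \cref{prop:upperbound-matching}, plus a componentwise construction via \cref{thm:struc-cycle}, in which the free waiting arc on each cycle absorbs the periodicity defect. The only difference is cosmetic: you verify $\delta_C = 0$ directly from $U_C - L_C \ge T-1$, whereas the paper first applies \cref{thm:struc-cycle} and then shifts the remaining discrepancy from the critical energy arc onto a free waiting arc of the cycle.
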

\begin{proof}
    We first compute a maximum-weight matching $\energyact$ within the set $\possenergyact$ of possible energy activities as in \Cref{prop:upperbound-matching}. By \Cref{thm:struc-cycle}, we find a timetable $\pi$ that realizes the maximum possible overlap $o_a = t_a^{\min}$ for all energy arcs except possibly for one arc per cycle component $C$ of $A_\wait \dotcup \energyact$. Since every such cycle $C$ necessarily contains a free waiting activity, we can add (modulo $T$) any number $\delta$ to the periodic tension of the critical arc, and subtract (modulo $T$) the same number $\delta$ from the periodic tension of an arbitrary waiting activity in $C$, while maintaining feasibility, and not altering the overlap of the other components of $A_\wait \dotcup \energyact$. This way, we can achieve the maximum overlap on all energy arcs of $C$. By \Cref{prop:upperbound-matching}, this solution is optimal.
\end{proof}

Let us now consider the more general case of \pespenergy with restricted waiting times and a restricted set of compatible arrivals and departures between which an energy transfer is possible.

\begin{theorem}
    \label{thm:pesp-energy-subset-hardness}
	The problem \pespenergy on a \osn~$\cE_n^{\energy}$ with an arbitrary set of possible energy activities and arbitrary bounds on the waiting activities is NP-hard. 
\end{theorem}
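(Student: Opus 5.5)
The plan is a polynomial reduction from the Directed Hamiltonian Path Problem, as announced in the complexity overview. Given a digraph $G=(V,R)$ without loops and with $n=|V|$ vertices, I build a \osn~$\cE_n^{\energy}$ with one line per vertex. For each $(u,v)\in R$ there is a possible energy activity $((u,\dep),(v,\arr))$, and there are no others. Every waiting activity is fixed with $l_a=u_a=0$. All acceleration and braking times are set to $t^{\ac}=t^{\br}=1$, so $t^{\min}_a=t^{\max}_a=\w_a=1$. The period is $T\coloneqq n+1$, which satisfies $t^\ac_j+t^\br_i=2<T$ for $n\ge 2$. I claim that $G$ has a Hamiltonian path if and only if the optimal overlap is at least $n-1$.

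The first key step evaluates the optimal overlap for a fixed matching~$\energyact$ using \cref{thm:struc-cycle}. The graph $(E,\energyact\dotcup A_\wait)$ splits into disjoint directed paths and cycles. Consider a cycle~$C$ through $k\le n$ lines. It has $k$ energy arcs and $k$ fixed waiting arcs, so $L_C=U_C=k$. Since $1\le k\le n<T$, we get $\delta_C\ge 1$. By \cref{thm:struc-cycle}(b),(c), the best overlap on $C$ is $k-\min\{1,\delta_C\}=k-1$. A path component through $m$ lines has $m-1$ energy arcs. By the remark after \cref{thm:struc-cycle} it achieves full overlap $m-1$. The components are independent, because only the waiting arcs constrain the timetable. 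Summing over components, the maximum overlap for $\energyact$ is exactly $n-c(\energyact)$, where $c(\energyact)$ is the number of components, counting single-line components with overlap $0$.

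The second step is the equivalence. Suppose $v_1,\dots,v_n$ is a Hamiltonian path in $G$. Then $\energyact=\{((v_r,\dep),(v_{r+1},\arr))\}_{r<n}$ is a matching, and together with the waiting arcs it forms a single path. So the overlap $n-1$ is attained. Conversely, suppose some solution has overlap at least $n-1$. By the formula above its matching has exactly one component, since overlap $n$ would require zero components. Following energy and waiting arcs alternately in that component visits the lines in an order $v_1,v_2,\dots$ with $(v_r,v_{r+1})\in R$ for consecutive lines. This order is either a Hamiltonian path or a Hamiltonian cycle of $G$. A Hamiltonian cycle contains a Hamiltonian path after deleting one arc. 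The construction is clearly polynomial, which proves NP-hardness.

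The main obstacle I expect is the first step. I must check that \cref{thm:struc-cycle} gives the exact optimum per cycle in this degenerate setting. Here $t^{\min}=t^{\max}$, so $[L_C,U_C]$ is a single point, and the wait bounds are fixed. I must also check that the direction conventions make components correspond to directed walks in $G$. Loops must be excluded, since otherwise a single line could close a $2$-arc cycle. Because $T>n$, no cycle can ever sum to a multiple of $T$, so each cycle loses exactly one unit; this is the crux of the argument.
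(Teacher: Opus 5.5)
Your proof is correct and uses the paper's reduction exactly: directed Hamiltonian path, fixed waiting times $l_a=u_a=0$, $t^{\ac}=t^{\br}=1$, $T=n+1$, threshold $n-1$. The only difference is the converse direction: the paper argues directly that at least $n-1$ energy arcs must have full overlap and that no cycle of such arcs can close up because $T>n$, whereas you use \cref{thm:struc-cycle} to obtain the exact optimum $n-c(\energyact)$ for each matching, which also handles a single Hamiltonian-cycle component explicitly without relying on integrality of the timetable.
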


\begin{proof}
	We prove the statement through a reduction from the Hamiltonian Path Problem on a directed graph $D=(V, R)$ with $|V| = n$. Let us define the following instance of \pespenergy on a \osn. Let $\cE_n^{\energy} =(E, A)$ be a \osn with $n$ lines, where a pair $((j, \dep),(i, \arr))$ is contained in $\possenergyact$ if and only if there is an arc $ji \in R$. Let the bounds on the waiting activities be given by $l_a=u_a=0$ for all $a \in A_{\wait}$, hence, the departure time of a line will be its arrival time. The braking and acceleration times of all lines are set to $t^{\ac}_i = t^{\br}_i=1$ for all $i \in [n]$. The period time is chosen to be equal to the number of lines plus one: $T = n+1$.
    
    Now, there is a Hamiltonian path in $D$ if and only if there is a solution to our \pespenergy instance with a total overlap of at least $n-1$. Assuming there is a Hamiltonian path from $s$ to $t$ in $D$, we select the energy activities corresponding to the arcs of the path. We can then start at the departure node of the line associated with $s$ and set its departure time to $0$. Then, given that we have visited node $i$, for node $i+1$ of the Hamiltonian path we fix the arrival and departure times of the corresponding line to $\pi((i+1, \arr))=\pi((i+1, \dep))= \pi((i, \arr))+1$. Like this, we obtain full overlap on the energy arcs given by the Hamiltonian path. As the Hamiltonian path consists of $n-1$ arcs, we achieve an overlap of $n-1$. 
    
    Consider now a solution $S=(\energyact, \pi, x, o)$ to \pespenergy with overlap of at least $n-1$. Then there must be full overlap on at least $n-1$ energy arcs. Let $A_{\text{full}}$ be the se of these arcs. Then $A_{\text{full}} \cup A_{\wait}$ induces a union of paths and cycles in $\cE_n$. If there is a cycle of full overlap arcs, then every arc on that cycle must have a periodic tension (duration) of $1$, which is not possible because $T > n$. Therefore, there are only paths. Since $A_{\text{full}}$ consists of $n-1$ arcs, there can only be a single path, which is thus a Hamiltonian path in $\cE_n^{\energy}$. This corresponds to a Hamiltonian path in the original graph~$D$.
\end{proof}

The cases that are not covered by \Cref{thm:pesp-energy-complexity-free-waiting,thm:pesp-energy-subset-hardness}, i.e., when waiting activities can have arbitrary bounds and $\cE_n$ is complete are studied in the following subsections.

\subsection{Perfect and Greedy Matchings of Energy Arcs} \label{subsec:matching}

While \cref{thm:struc-cycle} describes the structure of an optimal timetable for each connected component resulting from a fixed set~$\energyact$ of energy activities, we will now investigate the problem of selecting the energy activities.
In the following, we assume that an energy activity is possible between any departure and arrival event ($\cE_n^{\energy} = K_{n,n}$). The next result provides a lower bound on the number of energy activities in a globally optimal solution. 

\begin{theorem}\label{thm:matchingsize}
	In every optimal solution $S = (\energyact, \pi, x, o)$ to \pespenergy on a \osn $\cE_n^{\energy}$ there are at least $n-1$ energy activities in $\energyact$. 
\end{theorem}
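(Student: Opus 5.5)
The plan is an exchange argument by contradiction. Suppose $S = (\energyact, \pi, x, o)$ is optimal but $|\energyact| \le n-2$. I will construct another feasible solution with strictly larger total overlap.

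First, describe the components of $(E, \energyact \dotcup A_\wait)$. Every event has exactly one incident waiting activity. Since $\energyact$ is a matching, every event has at most one incident energy activity. So the components are node-disjoint directed cycles and directed paths, as already noted before \cref{thm:struc-cycle}. A path component starts at an arrival event with no incoming energy activity and ends at a departure event with no outgoing energy activity. This includes the degenerate path consisting of a single waiting activity $(\ell,\arr)\to(\ell,\dep)$.

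Next, count these paths. Exactly $n - |\energyact| \ge 2$ departure events are unmatched by $\energyact$, and each is the endpoint of exactly one path. Hence there are at least two distinct path components $P_1$ and $P_2$. Let $j_1$ be the final (unmatched) departure of $P_1$, and let $i_2$ be the initial (unmatched) arrival of $P_2$. Because $\cE_n^{\energy} = K_{n,n}$, the activity $j_1 i_2$ lies in $\possenergyact$. Moreover, $\energyact' \coloneqq \energyact \cup \{j_1 i_2\}$ is still a matching. Since $P_1 \neq P_2$, adding this arc merges the two paths into one longer path and creates no cycle.

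Then repair the timetable. Keep $\pi$ unchanged on all events outside $P_2$. On $P_2$, shift every event time by the same constant $c$ modulo $T$, and choose $c$ so that the new tension on $j_1 i_2$ equals $t^{\min}_{j_1 i_2}$. This value lies in $[t^{\min}_{j_1 i_2}, t^{\max}_{j_1 i_2}]$, so the arc has full overlap $t^{\min}_{j_1 i_2} > 0$ by \cref{cor:full-overlap}, using $t^{\ac}, t^{\br} > 0$.

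It remains to check that nothing else changes. A uniform shift of a whole component leaves all tensions on arcs inside $P_2$ unchanged. Therefore all waiting activities of $P_2$ stay feasible, and all energy activities of $P_2$ keep their overlaps. Every other activity of $\energyact$ has both endpoints outside $P_2$. Its tension and overlap are therefore also unchanged. This uses the fact that $j_1 i_2$ is the only selected energy activity joining $P_2$ to the rest of the network.

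The new solution is therefore feasible and has total overlap $\sum_{a\in\energyact} o_a + t^{\min}_{j_1 i_2}$, which is strictly larger than before. This contradicts the optimality of $S$, so $|\energyact| \ge n-1$.

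The main point needing care is the counting step: we must find two \emph{different} path components, so that the new arc does not close a cycle. If it closed a cycle, its full overlap would not be guaranteed, because of the $\delta_C$ loss in \cref{thm:struc-cycle}. Counting unmatched departures gives this directly. Once that is settled, the shifting step is routine, because in a \osn the only feasibility constraints are on waiting activities.
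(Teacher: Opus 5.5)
Your proposal is correct and matches the paper's proof: take two path components $P_1,P_2$, add the arc $j_1 i_2$, and shift $P_2$ by $c = \pi_{j_1} + t^{\min}_{j_1 i_2} - \pi_{i_2}$. This gives full overlap $t^{\min}_{j_1 i_2} > 0$ on the new arc and leaves all other tensions unchanged. Your count of the $n - |M| \ge 2$ unmatched departure events, which guarantees two \emph{distinct} path components, is a useful detail that the paper only asserts.
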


\begin{proof}
	Let $S = (\energyact, \pi, x, o)$ be an optimal solution to \pespenergy with $|\energyact| < n-1$, so at least two connected components of $(E, A_\wait \dotcup \energyact)$ are directed paths~$P_1, P_2$. For $k \in \{1,2\}$ let $i_k \in E_\arr$ be the start and $j_k \in E_\dep$ be the end node of $P_k$.
	
	Let $\energyact' \coloneqq \energyact \cup \{ j_1i_2 \}$, $c \coloneqq \pi_{j_1} + t^{\min}_{j_1i_2} - \pi_{i_2}$, and let us define the following timetable
	\[
	\pi'_v \coloneqq \begin{cases} \pi_v &\text{if } v \in E\setminus V(P_2), \\
		(\pi_v + c) \bmod T &\text{if } v \in V(P_2).
	\end{cases}
	\]
    We note that $\pi'$ is feasible. 
    For the new energy activities $\energyact'$ and the new timetable $\pi'$, we obtain the periodic tensions
	\[x'_{ji} = \begin{cases}
		(\pi'_i - \pi'_j) \bmod T = (\pi_i - \pi_j) \bmod T = x_{ji} &\text{for } ji \in \energyact,\\
		(\pi'_{i_2} - \pi'_{j_1}) \bmod T = t_{j_1i_2}^{\min} &\text{for } j=j_1, i=i_2,
	\end{cases}\]
	and hence the brake-traction overlaps
	\[
	o'_{ji} = \begin{cases}
		o_{ji} &\text{if } ji \in \energyact, \\
		t^{\min}_{j_1i_2} &\text{if } ji = j_1i_2.
	\end{cases}
	\]
	Consequently, as $t^{\min}_{j_1i_2} > 0$, $S' \coloneqq (\energyact', \pi', x', o')$ yields a better objective value than $S$, so $S$ cannot be optimal.
\end{proof}

\begin{corollary} \label{cor:extension-perfect-matching}
 If $\energyact$ is a matching that maximizes the achievable overlap and is not perfect, then there is a unique perfect matching $\energyact^p$ extending $\energyact$, and this leads to the same overlap.
\end{corollary}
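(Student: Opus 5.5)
By \cref{thm:matchingsize}, an overlap-maximizing matching $\energyact$ has at least $n-1$ arcs. If it is not perfect, it has exactly $n-1$ arcs, and I will build the corollary from this fact in three steps.

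The first step is to describe $(E, A_\wait \dotcup \energyact)$ when $|\energyact| = n-1$. The waiting arcs form a perfect matching from $E_\arr$ to $E_\dep$. In $\energyact$, exactly one departure event $j^*$ and exactly one arrival event $i^*$ are unmatched. Every event other than $i^*$ and $j^*$ has in-degree and out-degree one, so the graph is a disjoint union of directed cycles together with exactly one directed path. That path starts at $i^*$ (no energy arc enters it) and ends at $j^*$ (no energy arc leaves it).

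The second step gives uniqueness. Any perfect matching containing $\energyact$ must match $j^*$ to an unmatched arrival, and the only one is $i^*$. So $\energyact^p = \energyact \cup \{j^*i^*\}$ is the unique perfect extension. It lies in $\possenergyact$ because $\cE_n^{\energy} = K_{n,n}$.

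The third step shows the overlap does not change, and this is the part that needs care. Take an optimal solution $(\energyact, \pi, x, o)$ and add $j^*i^*$ to it, keeping $\pi$ unchanged. The overlap $o_{j^*i^*}$ is nonnegative by \cref{lem:brake-traction-overlap}, and all other overlaps are unaffected. Hence the overlap achievable with $\energyact^p$ is at least that of $\energyact$. Conversely, $\energyact$ maximizes the achievable overlap over all matchings, and $\energyact^p$ is itself a matching, so it cannot achieve more. The two values are therefore equal.

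It is worth noting an alternative route through \cref{thm:struc-cycle}: closing the path with $j^*i^*$ creates a cycle $C$, and its optimal overlap $\w(C) - \min\{t^{\min}_{a_0}, \delta_C\}$ is at least $\w(C) - \w_{j^*i^*}$, which is the path's overlap. The direct argument above is simpler, and I would use it.

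The only real subtlety is to read ``leads to the same overlap'' as referring to the optimal achievable overlap for the given matching. The inequality in each direction then follows from the maximality of $\energyact$ and the nonnegativity of overlaps.
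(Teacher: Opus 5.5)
Your proof is correct and matches the reasoning the paper intends: the corollary is stated without a separate proof as an immediate consequence of Theorem~\ref{thm:matchingsize}, which forces $|\energyact| = n-1$, so the single unmatched departure--arrival pair determines the unique extension $\energyact^p$. Nonnegativity of overlaps (keeping $\pi$ fixed) together with the optimality of $\energyact$ then gives equal overlap, exactly as you argue; in fact the added arc must receive overlap $0$ under the same timetable.
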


This yields an alternative way of looking at optimal solutions to \pespenergy on a \osn. A perfect matching~$\energyact^{p} \subseteq \possenergyact$ corresponds one-to-one to a permutation $\varphi \colon [n] \to [n]$ of the trains (lines) in a \osn. This is given by $\varphi(\ell) = k$ if and only if $((\ell,\dep),(k,\arr)) \in \energyact^p$. The directed cycles in $\energyact^p \dotcup A_\wait$ then correspond to the cycles of the permutation.

Recall that we can find an upper bound for the objective value of a \pespenergy instance by calculating the maximum-weight perfect matching within the set of possible energy activities~$a \in \possenergyact$ w.r.t.\ the weights~$\w \colon A_{\energy} \to \R$ with $\w_{ji} = \min\{ t_{j}^{\ac}, t_{i}^{\br}\}$ (cf.~\cref{prop:upperbound-matching}). It is known that for such weight structure, the maximum-weight perfect matching can be found easily by a greedy algorithm for the weights in our problem. Sorting both $t^{\br}_i$, $i \in E_{\arr}$, and $t^{\ac}_j$, $j \in E_{\dep}$, according to their sizes, we obtain permutations~$\sigma$ and $\varphi$ with $t^{\ac}_{\sigma(1)}\leq \dots \leq t^{\ac}_{\sigma(n)}$ and  $t^{\br}_{\varphi(1)} \leq \dots \leq t^{\br}_{\varphi(n)}$.

\begin{lemma}[{e.g.~\cite[Lemma~1]{vanDal}}]\label{lem:greedy}
$\energyact_\mathrm{greedy} \coloneqq \bigl\{(\sigma(i), \varphi(i)) \in E_{\dep} \times E_{\arr} \bigm| i \in [n]\bigr\}$ is a maximum-weight perfect matching w.r.t.\ $\w$ in the graph $(E, A_\energy)$.
\end{lemma}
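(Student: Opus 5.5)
We prove \cref{lem:greedy} with a standard exchange argument. Because $\cE_n^{\energy}=K_{n,n}$, every departure can be matched with every arrival, so a perfect matching is just a bijection between departures and arrivals. The weight of pairing $j$ with $i$ is $\w_{ji}=\min\{t^{\ac}_j,t^{\br}_i\}$. We will show that every maximum-weight perfect matching can be transformed, without losing weight, into the matching that pairs the departures and arrivals in sorted order, which is $\energyact_\mathrm{greedy}$.

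The key step is a two-by-two inequality. Take reals $a\le a'$ and $b\le b'$. We claim
\[
\min\{a,b\}+\min\{a',b'\}\;\ge\;\min\{a,b'\}+\min\{a',b\},
\]
so pairing small with small and large with large is never worse than crossing. To verify it, write $\min\{x,y\}=\tfrac12(x+y-|x-y|)$. The claim then reduces to $|a-b|+|a'-b'|\le|a-b'|+|a'-b|$.

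We check this by cases on the order of the four numbers. By symmetry we may assume $a\le b$. If the intervals $[a,a']$ and $[b,b']$ are disjoint, with $a'\le b$, both sides equal $(b-a)+(b'-a')$. If they overlap, the inequality follows by a short direct check. This case split is the only step with any real computation, and it is routine.

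Now fix an optimal perfect matching $\energyact$. Suppose the $k$-th smallest departure $\sigma(k)$ is matched to some arrival other than $\varphi(k)$. Let $k$ be the smallest index for which this happens. Then $\sigma(k)$ is matched to $\varphi(m)$ for some $m>k$, and $\varphi(k)$ is matched to $\sigma(m')$ for some $m'>k$. Since $m,m'>k$, we have $t^{\ac}_{\sigma(k)}\le t^{\ac}_{\sigma(m')}$ and $t^{\br}_{\varphi(k)}\le t^{\br}_{\varphi(m)}$.

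Swap the two pairs: use $(\sigma(k),\varphi(k))$ and $(\sigma(m'),\varphi(m))$ instead. By the two-by-two inequality the total weight does not decrease. Every pair with index below $k$ is left untouched, and index $k$ is now paired correctly, so the smallest mismatched index strictly increases.

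Repeating the swap at most $n$ times yields $\energyact_\mathrm{greedy}$, and its weight is at least that of the optimal matching we started from. Hence $\energyact_\mathrm{greedy}$ is a maximum-weight perfect matching. Ties in the sorting cause no trouble, because the inequality uses only non-strict orderings.
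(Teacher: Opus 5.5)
Your proof is correct. The paper does not prove this lemma itself; it only cites van Dal~\cite[Lemma~1]{vanDal}. Your argument is the standard one behind that reference. With departures and arrivals sorted, the weight matrix $\w_{ji}=\min\{t^{\ac}_j,t^{\br}_i\}$ satisfies the uncrossing (Monge) inequality. An exchange argument then turns any optimal perfect matching into the sorted one, which is $\energyact_\mathrm{greedy}$. Your write-up therefore makes the lemma self-contained. You also rightly rely on $\possenergyact$ being complete in this subsection: for an arbitrary $\possenergyact$ the sorted pairs need not even be arcs.

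One extra payoff: if you track equality in your case analysis rather than just the inequality, you get the exact identity
\[
\min\{a,b\}+\min\{a',b'\}-\min\{a,b'\}-\min\{a',b\} \;=\; \bigl|[a,a']\cap[b,b']\bigr| \qquad (a\le a',\ b\le b').
\]
This is precisely the weight change the paper uses without justification in the proof of \cref{thm:boundH}, where adjacent pairs of the greedy matching are uncrossed.
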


\subsection{A Hamiltonian Path Algorithm}

Having unraveled the connection to permutations, we will now present a heuristic based on Hamiltonian paths.

\begin{proposition}\label{prop:lowerbound-ham}
    Let $P$ be a Hamiltonian path in $\cE_n^\energy$ that traverses all waiting activities, and let $a'$ be the arc from the end node to the start node of $P$. Then the matching $\energyact \coloneqq (P \cap A_\energy) \cup \{a'\}$ admits a solution with overlap $\sum_{a \in \energyact} o_a \ge \sum_{a \in P \cap A_\energy} \w_a$.
\end{proposition}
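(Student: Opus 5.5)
Since $P$ traverses all waiting activities and all nodes of $\cE_n^\energy$, it starts at an arrival event and ends at a departure event, alternating between waiting and energy arcs. Its energy arcs $P \cap \possenergyact$ form a matching of size $n-1$. The closing arc $a'$ runs from the end node (a departure event) to the start node (an arrival event), and $a' \in \possenergyact$ because $\cE_n^\energy = K_{n,n}$. Hence $\energyact = (P\cap\possenergyact)\cup\{a'\}$ is a perfect matching, and $\energyact \dotcup A_\wait$ forms a single directed Hamiltonian cycle $C = P \cup \{a'\}$.

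The direct approach fixes tensions along $P$ and lets $a'$ absorb the periodicity, in the same spirit as the construction in the proof of \cref{thm:struc-cycle}. Choose $x_a \coloneqq l_a$ for every waiting activity $a \in P \cap A_\wait$ (feasible since $l_a \le u_a$), and $x_a \coloneqq t_a^{\min}$ for every $a \in P \cap \possenergyact$. Then define the timetable $\pi$ by walking along $P$: set the start node's time to $0$ and give each successive node the previous time plus the tension of the arc just traversed, reduced modulo $T$. This is consistent because $P$ is a path, so every node is reached exactly once and no cycle condition arises along $P$.

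Each waiting activity $a$ then has periodic tension $(\pi_j - \pi_i - l_a) \bmod T + l_a = l_a$, so $\pi$ is feasible. Each energy arc $a$ of $P$ has tension $t_a^{\min} < T$, so by \cref{cor:full-overlap} its overlap is $o_a = t_a^{\min} = \w_a$. The arc $a'$ receives some tension in $[0,T)$, and its overlap is $o_{a'} \ge 0$ by \cref{lem:brake-traction-overlap}. Summing gives $\sum_{a\in\energyact} o_a \ge \sum_{a \in P\cap\possenergyact} \w_a$.

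Alternatively, one can invoke \cref{thm:struc-cycle} on $C$, taking $a_0$ as a minimum-weight energy arc. That gives $o(C) = \w(C) - \min\{t_{a_0}^{\min},\delta_C\} \ge \w(C) - \w_{a_0} \ge \w(C) - \w_{a'} = \sum_{a\in P\cap\possenergyact}\w_a$. Either route works; the direct construction avoids the $\delta_C$ case analysis.

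The only delicate point is well-definedness: the timetable must be reduced modulo $T$, and the tensions on arcs of $P$ must come out exactly as prescribed. Both follow because $0 \le l_a \le T-1$ and $t_a^{\min} < T$.
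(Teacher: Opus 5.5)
Your proof is correct and is essentially the paper's argument: the paper simply applies \cref{thm:struc-cycle} to $C = P \cup \{a'\}$, and your direct construction is precisely the tension vector from that theorem's proof with $a'$ playing the role of $a_0$. Your version is in fact slightly more careful: \cref{thm:struc-cycle}(a) formally assumes $a_0$ has minimum weight, which $a'$ need not have, whereas both your direct construction and your alternative route via part (b) with $\w_{a_0} \le \w_{a'}$ avoid this issue.
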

In particular, the total weight of any Hamiltonian cycle containing all waiting activities is a lower bound on the optimal objective value.

\begin{proof}
    Let $C \coloneqq P \cup \{a'\}$.
    By applying \Cref{thm:struc-cycle} to 
    $C$, we find a timetable that realizes the full overlap $o_a = t_a^{\min} = \w_a$ for all energy arcs $a$ on $P$.
\end{proof}

In \cref{fig:exampleHP} we can see that neither a maximum-weight perfect matching nor a maximum-weight Hamiltonian path w.r.t.\ $\w$ necessarily yield an optimal solution to \pespenergy. In \cref{fig:celeft} the maxmimum-weight perfect matching (obtained by greedy sorting, see \cref{lem:greedy}) together with the waiting activities $\energyact\dotcup A_{\wait}$ decomposes into three cycles. Due to the cycle periodicity, however, we cannot obtain full overlap in the second cycle. There is no overlap on the dashed arc. While the greedy matching has weight $38$, only an overlap of $32$ can be obtained from this matching. 
In \cref{fig:cemiddle} an optimal solution is depicted. We can see two cycles, one of them containing a dashed arc with no overlap. The achieved overlap is $35$.
In \cref{fig:ceright}, we can see a maximum-weight Hamiltonian path with full overlap on all energy arcs. In total an overlap of $34$ is achieved. Due to the cycle periodicity it is not possible to obtain overlap on the missing energy arc.

\begin{figure}
    \centering
	\tikzset{labelnode/.style={sloped, above, font=\scriptsize}}
	\begin{subfigure}{0.3\textwidth}
		\centering
		\begin{tikzpicture}[->, shorten >=1pt, scale=0.3]
			
			\filldraw[station] (-0.2,-6.4) rectangle ++ (14.4,19.8);
			
			\node[Arr] (5) at (2,12) {0};
			\node[Dep] (7) at (12,12) {5};
			
			\node[Arr] (2) at (2, 9){13};
			\node[Dep] (3) at (12,9) {3};
			
			\node[Arr] (1) at (2,5) {0};
			\node[Dep] (4) at (12,5) {5};
			
			\node[Arr] (6) at (2,2) {12};
			\node[Dep] (8) at (12,2) {2};
			
			\node[Arr] (9) at (2,-2) {0};
			\node[Dep] (10) at (12,-2) {5};
			
			\node[Arr] (11) at (2,-5) {7};
			\node[Dep] (12) at (12,-5) {12};
			
			\node[font=\scriptsize, anchor=east] at (1.3, 9){8};
			\node[font=\scriptsize, anchor=west] at (12.7, 9){12};
			
			\node[font=\scriptsize, anchor=east] at (1.3, 12){12};
			\node[font=\scriptsize, anchor=west] at (12.7, 12){8};
			
			\node[font=\scriptsize, anchor=east] at (1.3, 5){6};
			\node[font=\scriptsize, anchor=west] at (12.7, 5){7};
			
			\node[font=\scriptsize, anchor=east] at (1.3, 2){7};
			\node[font=\scriptsize, anchor=west] at (12.7, 2){6};
			
			\node[font=\scriptsize, anchor=east] at (1.3, -2){5};
			\node[font=\scriptsize, anchor=west] at (12.7, -2){2};
			
			\node[font=\scriptsize, anchor=east] at (1.3, -5){2};
			\node[font=\scriptsize, anchor=west] at (12.7, -5){3};
			
			\draw (1) edge node[labelnode] {5} (4); 
			\draw (6) edge node[labelnode] {5} (8); 
			
			\draw (5) edge node[labelnode, below] {5} (7);
			\draw[black] (2) edge node[labelnode,below] {5} (3);
			
			\draw (9) edge node[labelnode] {5} (10); 
			\draw (11) edge node[labelnode] {5} (12);
			
			
			\draw[red] (3) edge node[labelnode, pos=0.25] {12} (5);
			\draw[red] (7) edge node[labelnode, pos=0.3] {8} (2);

			\draw[red, dashed] (8) edge node[labelnode, pos=0.3] {6} (1);
			\draw[red] (4) edge node[labelnode, pos=0.3] {7} (6);
			
			\draw[red] (12) edge node[labelnode, pos=0.3] {3} (9);
			\draw[red] (10) edge node[labelnode, pos=0.3] {2} (11);
			
		\end{tikzpicture}
		\caption{Greedy matching, \\ weight: 38, total overlap: 32}
		\label{fig:celeft}
	\end{subfigure}
	\begin{subfigure}{0.3\textwidth}
		\centering
		\begin{tikzpicture}[->, shorten >=1pt, scale=0.3]
			
			\filldraw[station] (-0.2,-6.4) rectangle ++ (14.4,19.8);
			
			
			\node[Arr] (5) at (2,12) {0};
			\node[Dep] (7) at (12,12) {5};
			
			\node[Arr] (2) at (2, 9){13};
			\node[Dep] (3) at (12,9) {18};
			
			\node[Arr] (1) at (2,5) {8};
			\node[Dep] (4) at (12,5) {13};
			
			\node[Arr] (6) at (2,2) {5};
			\node[Dep] (8) at (12,2) {10};
			
			\node[Arr] (9) at (2,-2) {0};
			\node[Dep] (10) at (12,-2) {5};
			
			\node[Arr] (11) at (2,-5) {0};
			\node[Dep] (12) at (12,-5) {5};

			\node[font=\scriptsize, anchor=east] at (1.3, 9){8};
			\node[font=\scriptsize, anchor=west] at (12.7, 9){12};
			
			\node[font=\scriptsize, anchor=east] at (1.3, 12){12};
			\node[font=\scriptsize, anchor=west] at (12.7, 12){8};
			
			\node[font=\scriptsize, anchor=east] at (1.3, 5){6};
			\node[font=\scriptsize, anchor=west] at (12.7, 5){7};
			
			\node[font=\scriptsize, anchor=east] at (1.3, 2){7};
			\node[font=\scriptsize, anchor=west] at (12.7, 2){6};
			
			\node[font=\scriptsize, anchor=east] at (1.3, -2){5};
			\node[font=\scriptsize, anchor=west] at (12.7, -2){2};
			
			\node[font=\scriptsize, anchor=east] at (1.3, -5){2};
			\node[font=\scriptsize, anchor=west] at (12.7, -5){3};
			
			\draw (1) edge node[labelnode] {5} (4); 
			\draw (6) edge node[labelnode] {5} (8); 
			
			\draw (5) edge node[labelnode, below] {5} (7);
			\draw[black] (2) edge node[labelnode,below] {5} (3);
			
			\draw (9) edge node[labelnode] {5} (10); 
			\draw (11) edge node[labelnode] {5} (12);
			
			
			\draw[red] (3) edge node[labelnode, pos=0.25] {12} (5);
			\draw[red] (7) edge node[labelnode, pos=0.3] {8} (2);

			\draw[red] (8) edge node[labelnode, pos=0.3] {5} (9);
			\draw[red] (4) edge node[labelnode, pos=0.35] {7} (6);

			\draw[red] (12) edge node[labelnode, pos=0.3] {3} (1);
			\draw[red, dashed] (10) edge node[labelnode, pos=0.4] {2} (11);
			
		\end{tikzpicture}
		\caption{Optimal solution, \\ total overlap: 35}
		\label{fig:cemiddle}
	\end{subfigure}	
	\begin{subfigure}{0.3\textwidth}
		\centering
		\begin{tikzpicture}[->, shorten >=1pt, scale=0.3]
			
			\filldraw[station] (-0.2,-6.4) rectangle ++ (14.4,19.8);
			
			\node[Arr] (5) at (2,12) {7};
			\node[Dep] (7) at (12,12) {12};
			
			\node[Arr] (2) at (2, 9){5};
			\node[Dep] (3) at (12,9) {10};
			
			\node[Arr] (1) at (2,5) {8};
			\node[Dep] (4) at (12,5) {13};
			
			\node[Arr] (6) at (2,2) {4};
			\node[Dep] (8) at (12,2) {9};
			
			\node[Arr] (9) at (2,-2) {14};
			\node[Dep] (10) at (12,-2) {4};
			
			\node[Arr] (11) at (2,-5) {0};
			\node[Dep] (12) at (12,-5) {5};

			\node[font=\scriptsize, anchor=east] at (1.3, 9){8};
			\node[font=\scriptsize, anchor=west] at (12.7, 9){12};
			
			\node[font=\scriptsize, anchor=east] at (1.3, 12){12};
			\node[font=\scriptsize, anchor=west] at (12.7, 12){8};
			
			\node[font=\scriptsize, anchor=east] at (1.3, 5){6};
			\node[font=\scriptsize, anchor=west] at (12.7, 5){7};
			
			\node[font=\scriptsize, anchor=east] at (1.3, 2){7};
			\node[font=\scriptsize, anchor=west] at (12.7, 2){6};
			
			\node[font=\scriptsize, anchor=east] at (1.3, -2){5};
			\node[font=\scriptsize, anchor=west] at (12.7, -2){2};
			
			\node[font=\scriptsize, anchor=east] at (1.3, -5){2};
			\node[font=\scriptsize, anchor=west] at (12.7, -5){3};
			
			\draw (1) edge node[labelnode] {5} (4); 
			\draw (6) edge node[labelnode] {5} (8); 
			
			\draw (5) edge node[labelnode, below] {5} (7);
			\draw[black] (2) edge node[labelnode,below] {5} (3);
			
			\draw (9) edge node[labelnode] {5} (10); 
			\draw (11) edge node[labelnode] {5} (12);
			
			
			\draw[red] (3) edge node[labelnode, pos=0.3] {12} (5);
			\draw[red] (7) edge node[labelnode, pos=0.6] {7} (6);
			
			\draw[red] (8) edge node[labelnode, pos=0.3] {5} (9);
			\draw[red] (4) edge node[labelnode, pos=0.3] {7} (2);

			\draw[red] (12) edge node[labelnode, pos=0.3] {3} (1);
			
		\end{tikzpicture}
		\caption{Hamiltonian path, \\ total overlap: 34}
		\label{fig:ceright}
	\end{subfigure}

	\caption{Example of non-optimal greedy and Hamiltonian path matchings for $T = 15$, see \cite{atmos}. The timetable~$\pi$ is written in the nodes. To the left of the departure nodes (white) the acceleration times are given, and to the right of the arrival nodes (blue) there are the braking times. The numbers on the waiting (black) and energy (red) arcs correspond to the weight $\w_a$. For the solid red arcs, they also correspond to the achieved overlap $o_a$, while $o_a = 0$ for the dashed arcs.}
	\label{fig:exampleHP}
\end{figure}

Although Hamiltonian paths might sound impractical at first glance, we show now that in our case they can be computed efficiently. 

\begin{theorem}\label{thm:findpath}
	A Hamiltonian path 
    in $\mathcal E_n^\energy$ traversing all waiting activities that has maximum weight w.r.t.\ $\w$ can be found in polynomial time.
\end{theorem}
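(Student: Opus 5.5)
The plan is to reduce the search for a maximum-weight Hamiltonian path through all waiting activities to a Hamiltonian \emph{cycle} problem with the cost structure of Gilmore and Gomory's problem of sequencing a one state-variable machine. That problem is solvable in polynomial time, in fact in $O(n\log n)$.

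\emph{Step 1: reformulate as a sequencing problem.} A Hamiltonian path in $\cE_n^\energy$ that uses every waiting activity visits, for each line, the arrival event and then the departure event. It is therefore determined by an ordering $\ell_1,\dots,\ell_n$ of the lines. Its energy arcs are $((\ell_k,\dep),(\ell_{k+1},\arr))$ for $k=1,\dots,n-1$, so its weight is $\sum_{k=1}^{n-1}\min\{t^\ac_{\ell_k},t^\br_{\ell_{k+1}}\}$.

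\emph{Step 2: pass from paths to cycles.} Add a dummy line $0$ with $t^\ac_0=t^\br_0=0$. Every arc incident to the dummy line then has weight $0$, since $\min\{\cdot,0\}=0$ for nonnegative times. Maximum-weight Hamiltonian paths on the $n$ lines correspond exactly to maximum-weight Hamiltonian cycles, that is, cyclic permutations, on the $n+1$ lines: cutting the cycle at the dummy line gives the path, and the weight is unchanged.

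\emph{Step 3: bring the cost into Gilmore--Gomory form.} Use the identity $\min\{a,b\}=a+b-\max\{a,b\}$ and $\max\{a_j,b_i\}=b_i+(a_j-b_i)^+$. Along any Hamiltonian cycle each line occurs exactly once as the tail and once as the head of an energy arc. Hence the total weight equals
\[
\sum_\ell t^\ac_\ell-\sum_{ji}(t^\ac_j-t^\br_i)^+ .
\]
Maximizing the weight is therefore the same as finding a cyclic permutation $\varphi$ that minimizes $\sum_j (t^\ac_j-t^\br_{\varphi(j)})^+$. This cost has the form $c_{ji}=\int_{t^\br_i}^{t^\ac_j}f(x)\,dx$ if $t^\ac_j\ge t^\br_i$ and $c_{ji}=\int_{t^\ac_j}^{t^\br_i}g(x)\,dx$ otherwise, with $f\equiv1$ and $g\equiv0$. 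Since $f+g\ge0$, this is precisely an instance of the Gilmore--Gomory traveling salesman problem.

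\emph{Step 4: apply the Gilmore--Gomory algorithm.} First compute the optimal assignment, which is the sorted (greedy) matching of \cref{lem:greedy}; sorting also minimizes this cost. Then merge its cycles by interchanges between adjacent positions in sorted order, choosing these interchanges via a minimum spanning tree on the cycle components and applying them in the prescribed order. This yields an optimal tour in $O(n\log n)$ time. Cutting the tour at the dummy line gives the desired path.

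The main obstacle is Step 4. Its correctness is the Gilmore--Gomory theorem, and I would cite it rather than reprove it. The remaining care is in checking that the degenerate choice $g\equiv0$ and the zero-time dummy node satisfy its hypotheses. If they do not, my fallback is a direct argument on this special cost: start from the greedy matching and patch cycles by swapping neighbours in sorted order, bounding the loss by a minimum spanning tree computation.
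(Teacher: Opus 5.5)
Your proposal is correct and follows the paper's route in substance. The paper likewise contracts the waiting arcs to a complete digraph on the lines and observes that a maximum-weight Hamiltonian cycle for $\w^G(k\ell)=\min\{t^\ac_k,t^\br_\ell\}$ is a minimum-cost tour for $\max\{-t^\ac_k,-t^\br_\ell\}$, i.e.\ an instance of the Large TSP; this is exactly the Gilmore--Gomory case (with $f\equiv 1$, $g\equiv 0$) that you derive explicitly in Step~3. The only difference is the path-to-cycle step: where you add a zero-time dummy line, the paper proves in \cref{lem:pathtour} that an optimal path is obtained by deleting from an optimal tour an arc entering (or leaving) the line with smallest $\min\{t^\ac_\ell,t^\br_\ell\}$, since that arc has the globally minimum weight.
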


In order to prove this theorem, we show that this problem can be transformed to the so-called \emph{Large TSP},
for which a polynomial-time algorithm is known, see~\cite{vanDal}
. To simplify the notation, in this section we write $t_\ell^{\ac} \coloneqq t_{(\ell,\dep)}^{\ac}$ and $t_\ell^{\br} \coloneqq t_{(\ell,\arr)}^{\br}$ for $\ell \in [n]$. 
Moreover, we can contract the waiting edges because they must appear in any allowed Hamiltonian path. This results in the complete bi-directed graph~$G = ([n], \cA)$ on the set of all lines and weights $\w_G \colon \cA \to \R$ defined by $\w^G(k\ell) \coloneqq \min \{t_k^{\ac}, t_\ell^{\br}\}$. This is formalized in the following \lcnamecref{lem:equivalence-hamiltonian-cycles}.

\begin{lemma} \label{lem:equivalence-hamiltonian-cycles}
	If $P_G$ is a maximum-weight Hamiltonian path in $G$ w.r.t.\ $\w^G$, then 
    $$P \coloneqq \{((k,\dep),(\ell,\arr)) \mid k\ell \in P_G\} \cup \{((\ell,\arr),(\ell,\dep)) \mid \ell \in [n]\}$$
    is a maximum-weight Hamiltonian path in $\cE_n^\energy$ w.r.t.\ $\w$ traversing all waiting activities, where we set $\w((\ell, \arr), (\ell, \dep)) \coloneqq 0$ for all $\ell \in [n]$.
\end{lemma}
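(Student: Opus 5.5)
We prove the lemma by exhibiting a weight-preserving bijection between Hamiltonian paths in $G$ and Hamiltonian paths in $\cE_n^\energy$ that traverse all waiting activities. Maximality then transfers directly from one side to the other.

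\textbf{The map from $G$ to $\cE_n^\energy$.} Let $P_G$ visit the lines $\ell_1,\dots,\ell_n$ in this order, so $P_G = \{\ell_1\ell_2,\dots,\ell_{n-1}\ell_n\}$. Then $P$ as defined in the statement is the walk
\[(\ell_1,\arr),(\ell_1,\dep),(\ell_2,\arr),(\ell_2,\dep),\dots,(\ell_n,\arr),(\ell_n,\dep).\]
It alternates waiting arcs and energy arcs $((\ell_k,\dep),(\ell_{k+1},\arr))$. All energy arcs exist because $\cE_n^\energy = K_{n,n}$, and every event appears exactly once. Hence $P$ is a Hamiltonian path containing all waiting activities. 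Its weight is
\[\w(P) = \sum_{k} \min\{t^{\ac}_{\ell_k}, t^{\br}_{\ell_{k+1}}\} = \w^G(P_G),\]
since waiting arcs have weight $0$ and $\w_{ji} = t^{\min}_{ji}$.

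\textbf{The map back.} Let $Q$ be any Hamiltonian path in $\cE_n^\energy$ traversing all waiting activities. Arcs only go from arrivals to departures (waiting) or from departures to arrivals (energy). Every $(\ell,\arr)$ is followed on $Q$ by its waiting arc to $(\ell,\dep)$, so $Q$ must start at an arrival event and end at a departure event. It therefore has the form $(\ell_1,\arr),(\ell_1,\dep),(\ell_2,\arr),\dots,(\ell_n,\dep)$ for some ordering $\ell_1,\dots,\ell_n$ of $[n]$. This ordering defines a Hamiltonian path $Q_G$ in $G$ with $\w^G(Q_G) = \w(Q)$ by the same computation, and $Q$ is exactly the image of $Q_G$ under the first map.

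\textbf{Conclusion.} For every such $Q$ we get
\[\w(Q) = \w^G(Q_G) \le \w^G(P_G) = \w(P),\]
so $P$ is of maximum weight. The only point needing care is the structural claim in the second step, that an allowed Hamiltonian path is forced into this alternating shape. It follows from the bipartite arc orientation together with the requirement that all waiting activities are used.
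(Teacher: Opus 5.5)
Your proof is correct and follows the same route as the paper. Both show that $P$ is an admissible Hamiltonian path of weight $\w^G(P_G)$, then contract an arbitrary admissible path $Q$ to a Hamiltonian path in $G$ of equal weight and conclude $\w(Q)\le\w(P)$. You additionally justify why $Q$ must start at an arrival, end at a departure and alternate waiting and energy arcs, a step the paper only asserts.
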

\begin{proof}
	First, $P$ is the arc set of a Hamiltonian path and contains all waiting activities. Consider a maximum-weight Hamiltonian path $P^{\opt}$ that visits all waiting activities. Then
    $(P^{\opt})_G \coloneqq \{k \ell \mid ((k,\dep),(\ell,\arr)) \in P^{\opt}\}$
    is a Hamiltonian path in $G$. In particular, $\w^G((P^{\opt})_G) \leq \w^G(P_G)$, so that
    $$ \w(P^\opt) = \w^G((P^\opt)_G) 
    \leq \w^G(P_G) 
    = \w(P),$$
    and we conclude $\w(P) = \w(P^\opt)$.
\end{proof}

The next two lemmata deal with Hamiltonian cycles.

\begin{lemma} \label{lem:pathtour}
	Any maximum-weight Hamiltonian cycle in $G$ w.r.t.\ $\w^G$ contains a maximum-weight Hamiltonian path. Conversely, any maximum-weight Hamiltonian path can be closed to a maximum-weight Hamiltonian cycle.
\end{lemma}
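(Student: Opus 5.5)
The approach is to show that the maximum weights of Hamiltonian cycles and Hamiltonian paths in $G$ differ by an explicit constant, namely the smallest of all $2n$ acceleration and braking times. Both directions of the statement then follow by counting.

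Let $H_C$ and $H_P$ denote the maximum weights w.r.t.\ $\w^G$ of a Hamiltonian cycle and of a Hamiltonian path in $G$, respectively. Let
\[
\mu \coloneqq \min\bigl\{\min_{\ell \in [n]} t^{\ac}_\ell,\ \min_{\ell \in [n]} t^{\br}_\ell\bigr\}.
\]
Since $\w^G(k\ell) = \min\{t^{\ac}_k, t^{\br}_\ell\}$, two elementary facts hold.
\begin{enumerate}[(i)]
\item Every arc of $G$ has weight at least $\mu$.
\item There is a line $q$ such that every arc incident to $q$ in a fixed direction has weight exactly $\mu$. If $\mu = t^{\ac}_q$, these are all arcs leaving $q$, since $\w^G(q\ell) = \min\{t^{\ac}_q, t^{\br}_\ell\} = t^{\ac}_q$ by minimality. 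If $\mu = t^{\br}_q$, these are all arcs entering $q$. By symmetry I would treat only the first case.
\end{enumerate}

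Next I establish $H_C = H_P + \mu$. For the upper bound, take any Hamiltonian cycle $C$. It contains exactly one arc $a_q$ leaving $q$, and $\w^G(a_q) = \mu$ by (ii). Since $C \setminus \{a_q\}$ is a Hamiltonian path, $\w^G(C) = \mu + \w^G(C\setminus\{a_q\}) \le \mu + H_P$. For the lower bound, take any Hamiltonian path $P$ from $s$ to $e$. Closing it with the arc $es$ yields a Hamiltonian cycle of weight $\w^G(P) + \w^G(es) \ge \w^G(P) + \mu$ by (i). Hence $H_C \ge H_P + \mu$, and the identity follows. The case $n = 1$ is degenerate and can be excluded or treated trivially.

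Both claims now follow. If $C$ is a maximum-weight Hamiltonian cycle, removing the arc leaving $q$ gives a Hamiltonian path of weight $H_C - \mu = H_P$, which is therefore maximum and contained in $C$. Conversely, if $P$ is a maximum-weight Hamiltonian path, its closing arc yields a cycle of weight at least $H_P + \mu = H_C$, which is therefore maximum.

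The only step requiring care is the observation (ii) that all arcs out of (or into) the line attaining the global minimum have the same, minimal weight. Everything else is a direct counting argument.
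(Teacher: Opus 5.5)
Your proposal is correct and follows essentially the same route as the paper. Both arguments take the line attaining the global minimum of all acceleration and braking times, use that all of its arcs in one direction have exactly this minimal weight, delete that arc from an optimal cycle, and close an optimal path with an arc of weight at least that minimum. The only difference is cosmetic: you first state the identity $H_C = H_P + \mu$ and derive both claims from it, whereas the paper chains the inequalities directly to conclude that both objects are optimal.
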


\begin{proof}
	Let $k \coloneqq \argmin\{ \min\{ t_{\ell}^{\ac}, t_\ell^{\br} \} \mid \ell \in [n]\}$. W.l.o.g.\ let us assume $t_{k}^{\br} \leq t_k^{\ac}$. We know that all incoming arcs of $k$ have weight $\w^G_{\min} \coloneqq t_{k}^{\br}$.
	Let $C^{\opt}$ be a maximum-weight Hamiltonian cycle. Since $C^{\opt}$ must visit $k$, it must contain an arc~$a$ of weight $\w^G_{\min}$. Then $P \coloneqq C^{\opt}\setminus \{a\}$ is a Hamiltonian path with weight $\w^G(P) = \w^G(C^{\opt}) - \w^G_{\min}$.
	
	Let now $P^{\opt}$ be a maximum-weight Hamiltonian path, and let $v$ be the first and $u$ be the last vertex in $P^{\opt}$. Then $C \coloneqq P^{\opt} \cup \{uv\}$ is a Hamiltonian cycle with weight 
    $$\w^G(C) = \w^G(P^{\opt}) + \w^G(uv) \ge \w^G(P^{\opt}) + \w^G_{\min} \geq \w^G(P) + \w^G_{\min} = \w^G(C^{\opt}).$$
	
	Therefore, $\w^G(C) = \w^G(C^{\opt})$ and $\w^G(P) = \w^G( P^{\opt})$, so that
    $P$ and $C$ must be optimal.
\end{proof}

This shows that a Hamiltonian path as in \cref{thm:findpath} can be found by computing a maximum-weight Hamiltonian cycle in $G$ w.r.t.\ $\w^G$. This is obviously equivalent to computing a minimum-weight Hamiltonian cycle for the weights $-\w^G_{k\ell} = \max\{-t_k^\ac, -t_\ell^\br\}$. This is an instance of the Large TSP, which is well known to be solvable in polynomial time, see e.g.~\cite{vanDal}.

The maximum-weight Hamiltonian path yields a feasible solution to \pespenergy on a \osn. We can guarantee that the objective value of this solution is not further away from the optimal objective value than the smaller of the largest acceleration and the largest braking time. This follows from the following \lcnamecref{thm:boundH}, which bounds the difference between the lower bound from \Cref{prop:lowerbound-ham} and the upper bound from the maximum-weight matching (\Cref{prop:upperbound-matching}).

\begin{theorem}\label{thm:boundH}
	Let $P$ be a Hamiltonian path traversing all waiting activities in $\cE_n^{\energy}$ of maximum weight. Then it holds
	\[\w(\energyact_\mathrm{greedy}) - \w(P \cap A_\energy) \leq  \min\bigl\{ \max\{t^{\br}_\ell \mid \ell \in [n]\} ,\ \max\{t^{\ac}_\ell \mid \ell \in [n]\}\bigr\}.\]
\end{theorem}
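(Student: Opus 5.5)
We construct an explicit Hamiltonian path from the greedy matching and compare weights. Sort lines so that $t^{\ac}_{\sigma(1)}\le\dots\le t^{\ac}_{\sigma(n)}$ and $t^{\br}_{\varphi(1)}\le\dots\le t^{\br}_{\varphi(n)}$. By \cref{lem:greedy}, $\w(\energyact_\mathrm{greedy})=\sum_{i=1}^n \min\{t^{\ac}_{\sigma(i)},t^{\br}_{\varphi(i)}\}$. By \cref{lem:equivalence-hamiltonian-cycles} and \cref{lem:pathtour}, it suffices to exhibit, in the contracted graph $G$, a Hamiltonian cycle $C$ with $\w^G(C)\ge \w(\energyact_\mathrm{greedy})$. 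Deleting from $C$ an arc of weight at most $\min\{\max_\ell t^{\br}_\ell,\max_\ell t^{\ac}_\ell\}$ then gives a Hamiltonian path with the required weight. Any arc qualifies, since every arc weight $\min\{t^{\ac}_k,t^{\br}_\ell\}$ is bounded by both maxima. The maximum-weight path $P$ is at least as good as this path.

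The key step is a patching argument. The greedy permutation $\energyact_\mathrm{greedy}$ decomposes $G$ into cycles, viewed as the permutation $\psi$ on lines with $\psi(\sigma(i))=\varphi(i)$. We merge these cycles into a single Hamiltonian cycle without decreasing the weight. Take two adjacent positions $i,i+1$ in the sorted order whose greedy arcs $\sigma(i)\to\varphi(i)$ and $\sigma(i+1)\to\varphi(i+1)$ lie in different cycles. Exchange them for $\sigma(i)\to\varphi(i+1)$ and $\sigma(i+1)\to\varphi(i)$. This transposition merges the two cycles into one. The weight change is
\[
\min\{a_i,b_{i+1}\}+\min\{a_{i+1},b_i\}-\min\{a_i,b_i\}-\min\{a_{i+1},b_{i+1}\},
\]
where $a_i=t^{\ac}_{\sigma(i)}$ and $b_i=t^{\br}_{\varphi(i)}$ are sorted. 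A sorted exchange of this kind can lose weight; for $\min$-weights the loss is at most $\min\{a_{i+1}-a_i,\,b_{i+1}-b_i\}$ by a short case check. Some loss is unavoidable in general, so the claim should not be that the Hamiltonian cycle is at least as good, but that the total loss is controlled.

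So the refined plan is to keep the exact greedy cycle-merging via adjacent transpositions (the classical Gilmore--Gomory patching idea). If the cycles are merged using transpositions at a set $I$ of adjacent indices, the total loss is at most $\sum_{i\in I}\min\{a_{i+1}-a_i,\,b_{i+1}-b_i\}$. This sum is at most $\min\{\sum_{i\in I}(a_{i+1}-a_i),\sum_{i\in I}(b_{i+1}-b_i)\}\le \min\{a_n-a_1,\,b_n-b_1\}$, because the indices are distinct and the sorted increments telescope. This yields a Hamiltonian cycle $C$ with $\w^G(C)\ge \w(\energyact_\mathrm{greedy})-\min\{a_n-a_1,b_n-b_1\}$.

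We then remove an arc of $C$ entering the line $k$ with minimal $\min\{t^\ac_k,t^\br_k\}$, as in \cref{lem:pathtour}. This arc has weight $\w^G_{\min}\le\min\{a_1,b_1\}$. The resulting Hamiltonian path has weight at least $\w(\energyact_\mathrm{greedy})-\min\{a_n-a_1,b_n-b_1\}-\min\{a_1,b_1\}$. The total deficit is at most $\min\{a_n,b_n\}$, since $\min\{x,y\}+\min\{u,v\}\le\min\{x+u,y+v\}$. The optimal path $P$ is at least as good as this path. There are two main obstacles. The first is verifying the per-transposition loss bound. The second is showing that adjacent transpositions with distinct indices always suffice to connect all cycles. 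For the latter, note that whenever there is more than one cycle, some adjacent pair $i,i+1$ lies in different cycles. Once a transposition at $i$ is used, the positions $i$ and $i+1$ lie in the same cycle, so no index is reused.
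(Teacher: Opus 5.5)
Your overall strategy is the paper's: merge the greedy cycles by adjacent transpositions in sorted order, telescope the losses, then drop a cheapest arc. However, the central accounting step is false as stated.

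The bound $\min\{a_{i+1}-a_i,\,b_{i+1}-b_i\}$ is the loss of a transposition applied to the \emph{greedy} arcs $\sigma(i)\to\varphi(i)$ and $\sigma(i+1)\to\varphi(i+1)$. Once a transposition at $i-1$ or $i+1$ has been executed, $\sigma(i)$ or $\sigma(i+1)$ is no longer matched to its greedy partner. The swap at $i$ then exchanges arrivals whose braking times $b',b''$ can be any values in $[b_1,b_n]$, and the individual losses are not additive.

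Concretely, let $n=4$, $t^{\ac}=(1,1,1,10)$, $t^{\br}=(1,10,10,10)$, with $\sigma=\varphi=\mathrm{id}$. The greedy matching has weight $13$ and consists of four cycles, so you must use $I=\{1,2,3\}$. Performing the swaps in the order $1,2,3$ yields $(\ell,\dep)\to(\ell+1,\arr)$ for $\ell\le 3$ and $(4,\dep)\to(1,\arr)$. This has weight $1+1+1+\min\{10,1\}=4$, a loss of $9$, whereas $\sum_{i\in I}\min\{a_{i+1}-a_i,\,b_{i+1}-b_i\}=0$. The order $3,2,1$ happens to lose nothing here, but additivity of interchange costs needs a specific execution order (the delicate part of Gilmore--Gomory), and your argument supplies none. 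In particular, the $b$-side telescoping $\sum_{i\in I}(b_{i+1}-b_i)\le b_n-b_1$ is unjustified, because the braking times actually involved in the swaps are not $b_i,b_{i+1}$.

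The repair, which is exactly the paper's argument, is to telescope only on the $a$-side and bound the $b$-side by the whole range. Suppose the departures at positions $i,i+1$ are currently matched to arrivals with braking times $b'$ and $b''$. The swap changes the weight by $-\bigl|[a_i,a_{i+1}]\cap[b',b'']\bigr|$ if $b'\le b''$, and by $+\bigl|[a_i,a_{i+1}]\cap[b'',b']\bigr|$ otherwise. (The paper checks that $b'\le b''$ always holds for its left-to-right merging order.) Hence each loss is at most $\bigl|[a_i,a_{i+1}]\cap[b_1,b_n]\bigr|$, regardless of earlier swaps.

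Since the indices in $I$ are distinct, the intervals $[a_i,a_{i+1}]$ overlap only at endpoints. The total loss is therefore at most $\bigl|[a_1,a_n]\cap[b_1,b_n]\bigr|\le\min\{a_n-a_1,\,b_n-b_1\}$. With this replacement, your remaining steps go through: deleting an arc of weight $\min\{a_1,b_1\}$, then using $\min\{x,y\}+\min\{u,v\}\le\min\{x+u,y+v\}$.
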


\begin{proof}
	We iteratively convert the greedy matching $\energyact^{0}_\text{greedy}$ into a matching inducing a Hamiltonian cycle and bound the total reduction of weight in this process. Finally, we delete one edge to obtain a Hamiltonian path.
	
	Let us assume that $t^{\ac}_1 \leq \dots \leq t^{\ac}_n$ holds, and let $\varphi^{0}$ denote the permutation obtained by $\energyact^{0}_\text{greedy}$ such that $t^{\br}_{\varphi^{0}(1)} \leq \dots \leq t^{\br}_{ \varphi^{0}(n)}$. Then $\energyact^{0}_\text{greedy} = \{( (\ell, \dep), (\varphi^{0}(\ell), \arr)) \mid \ell \in [n]\}$. 
	The permutation $\varphi^{i}$ corresponds to the perfect matching $\energyact^{i}$ obtained in iteration $i$.	
	In each iteration, we obtain the matching $\energyact^{i}$ as follows from the matching $\energyact^{i-1}$.
	Let $C \subseteq \energyact^{i-1} \cup A_\wait$ be the cycle containing $(1, \dep)$. If $C$ is a Hamiltonian cycle, we are done. Otherwise, there is a smallest $\ell_i$ in iteration $i$ such that $(\ell_i, \dep) \in C$ but $(\ell_i +1, \dep) \not \in C$. 
    We define the new permutation $\varphi^{i}$ as follows:
	\[\varphi^{i}(x) \coloneqq \begin{cases}
		\varphi^{i-1}(\ell_i +1) &\text{if  } x = \ell_i, \\
		\varphi^{i-1}(\ell_i) &\text{if  } x = \ell_i +1, \\
		\varphi^{i-1}(x) &\text{else.} 
	\end{cases}\]
	
	For the new matching $\energyact^{i}$ we have:
    \begin{align*}
    \energyact^{i} = \energyact^{i-1} &\cup \bigl\{((\ell_i, \dep), (\varphi^{i-1}(\ell_i+1), \arr)), ((\ell_i+1,\dep),(\varphi^{i-1}(\ell_i),\arr))\bigr\} \\
    &\setminus \bigl\{((\ell_i, \dep), (\varphi^{i-1}(\ell_i), \arr)), ((\ell_i+1, \dep), (\varphi^{i-1}(\ell_i+1), \arr))\bigr\}.
    \end{align*}
    The cycle's length increases by this operation and for the weight of the new matching $\energyact^{i}$, we get
	\begin{align*}
		\w(\energyact^{i}) &= \w(\energyact^{i-1}) + \min\{t^{\ac}_{\ell_i}, t^{\br}_{\varphi^{i-1}(\ell_i+1)}\} + \min\{t^{\ac}_{\ell_i+1}, t^{\br}_{\varphi^{i-1}(\ell_i)}\} \\ & \hspace{62 pt} - \min\{t^{\ac}_{\ell_i}, t^{\br}_{\varphi^{i-1}(\ell_i)}\} - \min\{t^{\ac}_{\ell_i+1}, t^{\br}_{\varphi^{i-1}(\ell_i+1)}\} \\
		&= \w(\energyact^{i-1}) - | [t^{\ac}_{\ell_i}, t^{\ac}_{\ell_i+1} ] \cap [t^{\br}_{\varphi^{i-1}(\ell_i)}, t^{\br}_{\varphi^{i-1}(\ell_i+1)} ] |.
	\end{align*}
    Note that in each iteration $i$ we have $t_{\varphi^{i-1}(\ell_i)}^{\br} \le t_{\varphi^{i-1}(\ell_i+1)}^{\br}$, as the inequality holds for $\varphi^0$, and the image of $\ell_i + 1$ has not been changed in iterations~$1,\dotsc,i-1$, while the image of $\ell_i$ can only have decreased. 
    Therefore, we have $\varphi^{i-1}(\ell_i) \le \varphi^0(\ell_i) \le \varphi^0(\ell_i+1) = \varphi^{i-1}(\ell_i+1)$.
    
	Further, we know that $[t^{\br}_{\varphi^{i-1}(\ell_i)}, t^{\br}_{\varphi^{i-1}(\ell_i+1)} ] \subseteq [t^{\br}_{\varphi^{0}(1)}, t^{\br}_{\varphi^{0}(n)} ]$.
	Hence, the length of the intersection can be bounded by \[| [t^{\ac}_{\ell_i}, t^{\ac}_{\ell_i+1} ] \cap [t^{\br}_{\varphi^{i-1}(\ell_i)}, t^{\br}_{\varphi^{i-1}(\ell_i+1)} ] | \leq | [t^{\ac}_{\ell_i}, t^{\ac}_{\ell_i+1} ] \cap [t^{\br}_{\varphi^{0}(1)}, t^{\br}_{\varphi^{0}(n)} ] | \] and, therefore 
	$$\w(\energyact^{i}) \geq \w(\energyact^{i-1}) - | [t^{\ac}_{\ell_i}, t^{\ac}_{\ell_i+1} ] \cap [t^{\br}_{\varphi^{0}(1)}, t^{\br}_{\varphi^{0}(n)} ] |.$$
	
	Let $k$ be the number of iterations until we obtain a matching $\energyact^k$ such that $\energyact^k \dotcup A_\wait$ corresponds to a Hamiltonian cycle.  It holds $k \leq n - 1$ as there are $n$ different trains.
	\begin{align*}
		\w(\energyact^{k}) &\geq \w(\energyact^{0}_{\text{greedy}}) - \sum_{i=1}^{k} | [t^{\ac}_{\ell_{i}}, t^{\ac}_{\ell_{i}+1} ] \cap [t^{\br}_{\varphi^{0}(1)}, t^{\br}_{\varphi^{0}(n)}] | 
	\end{align*}
    We can bound the sum of the intersections as follows:
	\begin{align*}
		\sum_{i=1}^{k}  | [t^{\ac}_{\ell_i}, t^{\ac}_{\ell_i+1} ] \cap [t^{\br}_{\varphi^{0}(1)}, t^{\br}_{\varphi^{0}(n)} ] | &\leq \biggl\lvert\bigcup_{j=1}^{n} [t^{\ac}_j, t^{\ac}_{j+1} ] \biggr\rvert  = t^{\ac}_n - t^{\ac}_1,\\
		\sum_{i=1}^{k}  | [t^{\ac}_{\ell_i}, t^{\ac}_{\ell_i+1} ] \cap [t^{\br}_{\varphi^0(1)}, t^{\br}_{\varphi^0(n)} ] | &\leq |[t^{\br}_{\varphi^0(1)}, t^{\br}_{\varphi^0(n)} ]| = t^{\br}_{\varphi^{0}(n)} - t^{\br}_{\varphi^{0}(1)}.
  	\end{align*}
	Thus, $\w(\energyact^k) \geq \w(\energyact^{0}_\text{greedy}) - \min\{ t^{\ac}_n - t^{\ac}_1, t^{\br}_{\varphi^{0}(n)} - t^{\br}_{\varphi^{0}(1)} \}$. In order to receive a Hamiltonian path $P \subseteq \energyact^k \dotcup A_\wait$, we delete one edge from the matching~$\energyact^k$. As we want to maximize the path's weight, we choose the edge with the lowest weight. Due to the weight structure, this weight is $\min \{t^{\ac}_1, t^{\br}_{\varphi^{0}(1)} \}$. For the difference of the weights of the greedy matching  $\energyact_{\text{greedy}}$ and the weight of the energy arcs in $H$, we get:
	\begin{align*}
		\w(\energyact_\text{greedy}) - \w(P \cap A_\energy) &\leq  \min\{ t^{\ac}_n - t^{\ac}_1,\, t^{\br}_{\varphi^{0}(n)} - t^{\br}_{\varphi^{0}(1)} \} + \min \{t^{\ac}_1, t^{\br}_{\varphi^{0}(1)} \}\\
		& \leq   \min\{ t^{\ac}_n , t^{\br}_{\varphi^{0}(n)} \}.
	\end{align*}
	The weight of the path $P$ is a lower bound for the weight of an optimal Hamiltonian path.
\end{proof}

\section{Polynomial Cases of \pespenergy}
\label{sec-Special-Cases}
In this section, we identify special cases of \pespenergy on a \osn $K_{n,n}$ with arbitrary bounds on waiting activities that can be solved in polynomial time.
Each special case is based on a particular choice of the acceleration and braking times.

\subsection{Special Case: Large Period Time $T$}
\label{sec-SC-T-big}
In the first case, we consider a period time that is so large that no cycle $C \subseteq \energyact\dotcup A_{\wait}$ can have full overlap on all energy arcs. This corresponds to an aperiodic timetabling problem.

\begin{proposition}\label{prop:specialcase2}
    Let $u^{\max} \coloneqq \max\{u_a \mid a \in A_\wait\}+ \max\{t^{\min}_{a}+
    t^{\max}_{a} \mid  a \in A_\energy\}$, and let 
    $T \geq n \cdot u^{\max}$. Then, any matching $\energyact^{H}$ inducing a Hamiltonian path of maximum weight w.r.t.\ $\w \coloneqq t^{\min}$ defines an optimal solution $S=(\energyact^{H}, \pi^H, x^H, o^H)$. 
\end{proposition}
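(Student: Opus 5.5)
Fix an arbitrary optimal solution $S^*=(\energyact^*,\pi^*,x^*,o^*)$. By \Cref{cor:extension-perfect-matching} we may assume $\energyact^*$ is perfect, so that $\energyact^*\dotcup A_\wait$ decomposes into directed cycles $C_1,\dots,C_r$. The argument has two steps: first bound the overlap of each cycle by the weight of its energy arcs minus the smallest one, and then glue the resulting paths into a Hamiltonian path whose weight is at least as large.

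\textbf{Step 1 (each cycle loses its lightest arc).} Fix a cycle $C$. It has at most $n$ waiting arcs and at most $n$ energy arcs. We have $U_C\le n\cdot u^{\max}\le T$, and the inequality is in fact strict because $t^{\min}_a+t^{\max}_a<T$. In the boundary case the plan is to argue directly with the per-arc slack. Also $L_C>0$, since $t^{\min}_a>0$ for every energy arc. Hence $[L_C,U_C]\subseteq(0,T)$, and
\[
\delta_C=\min\{L_C,\;T-U_C\}.
\]
We want $\delta_C\ge t^{\min}_{a_0}$, where $a_0$ is the lightest energy arc of $C$. This holds if $L_C\ge t^{\min}_{a_0}$, which is clear, and if $T-U_C\ge t^{\min}_{a_0}$. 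The second inequality is the main obstacle. For a cycle with $k<n$ lines we get $U_C\le k\,u^{\max}\le T-u^{\max}$, which is enough. For a Hamiltonian cycle ($k=n$) the available slack is only $T-U_C\ge\sum_a(u^{\max}-u_a-t^{\min}_a-t^{\max}_a)$, and we need to extract a margin of $t^{\min}_{a_0}$ from it. If this fails, the fallback is to note that on a full cycle the total tension $x(C)\in T\Z$ can only be $0$ or $T$, and to check directly that then at least one energy arc loses at least $t^{\min}_{a_0}$.

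Once Step 1 is established, \Cref{thm:struc-cycle}(b),(c) gives
\[
o^*(C)\le \w(C)-t^{\min}_{a_0(C)}=\w(C\setminus\{a_0(C)\}).
\]

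\textbf{Step 2 (gluing).} Delete the lightest energy arc from each cycle. This leaves node-disjoint paths $P_1,\dots,P_r$, each traversing its waiting arcs, with
\[
o^*(\energyact^*)\le\sum_i \w(P_i).
\]
Now link the paths into a single Hamiltonian path $P$ traversing all waiting activities. To do this, add energy arcs from the end departure node of $P_i$ to the start arrival node of $P_{i+1}$ for $i=1,\dots,r-1$. These arcs exist because the network is $K_{n,n}$, and they have positive weight. Therefore
\[
\w(P\cap A_\energy)\ge\sum_i\w(P_i)\ge o^*(\energyact^*).
\]
Since $\energyact^H$ induces a Hamiltonian path of maximum weight, we also have $\w(\energyact^H\text{-path})\ge\w(P\cap A_\energy)$.

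\textbf{Step 3 (conclusion).} By \Cref{prop:lowerbound-ham}, the matching given by the maximum-weight Hamiltonian path (closed by one arc) admits a timetable realising full overlap on all path arcs. Its objective is therefore at least the optimum $o^*$, so it is optimal. This is the solution $S$ in the statement.
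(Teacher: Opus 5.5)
Your overall plan is exactly the paper's: pass to a perfect matching via \Cref{cor:extension-perfect-matching}, show that each cycle of $\energyact^*\dotcup A_\wait$ loses at least its lightest energy arc, glue the resulting paths into a Hamiltonian path, and realise full overlap on a maximum-weight Hamiltonian path.

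\textbf{The gap.} The one step that actually uses $T\ge n\cdot u^{\max}$, namely $T-U_C\ge t^{\min}_{a_0}$ in Step~1, is left unproven. You handle cycles with $k<n$ lines, call the Hamiltonian case ``the main obstacle'', and offer only a conditional fallback. The trouble comes from a miscomputed slack: you subtract $t^{\min}_a$, but $U_C$ contains only $t^{\max}_e$ for each energy arc $e$, while $u^{\max}$ budgets $t^{\min}_e+t^{\max}_e$ per line. The true slack is therefore at least $\sum_{e\in C\cap\energyact^*}t^{\min}_e$. Since $a_0$ is itself an energy arc of $C$ and all $t^{\min}_e>0$,
\[
U_C+t^{\min}_{a_0}\le\sum_{a\in C\cap A_\wait}u_a+\sum_{e\in C\cap\energyact^*}\bigl(t^{\min}_e+t^{\max}_e\bigr)\le k\cdot u^{\max}\le n\cdot u^{\max}\le T .
\]
This gives $\delta_C=\min\{L_C,\,T-U_C\}\ge t^{\min}_{a_0}$ for every cycle, with no case split on $k$. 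It is the one-line inequality the paper uses.

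\textbf{The fallback.} Your fallback does not work as stated either. $x(C)$ is not confined to $\{0,T\}$: energy tensions range over $[0,T-1]$, so on cycles with several lines $x(C)$ can be $2T$ or more. A correct direct version runs as follows. Suppose every energy arc of $C$ had positive overlap. Then by \Cref{lem:brake-traction-overlap}, $0<x_e<t^{\min}_e+t^{\max}_e$ for each of them. Hence
\[
0<x(C)<\sum_{a\in C\cap A_\wait}u_a+\sum_{e\in C\cap\energyact^*}\bigl(t^{\min}_e+t^{\max}_e\bigr)\le T,
\]
contradicting $x(C)\in T\Z$. So some energy arc of $C$ has zero overlap, and $o(C)\le\w(C)-t^{\min}_{a_0}$ follows without invoking \Cref{thm:struc-cycle}.

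With either repair, your Steps~2 and~3 are correct. They coincide with the paper's gluing argument and its application of \Cref{thm:struc-cycle} to the closed-up maximum-weight Hamiltonian path.
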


\begin{proof}
    Let $S = (\energyact, \pi, x, o)$ be an optimal solution and let $\energyact^H$ be any matching that induces a maximum-weight Hamiltonian path $P^H =\energyact^H \dotcup A_\wait$ w.r.t.\ $\w$. Applying \cref{thm:struc-cycle} to the cycle obtained by adding the missing arc to $P^H$, we find a timetable $\pi^H$ with overlap~$o^H$ such that $o^H_a = t^{\min}_a$ for all $a \in P^H \cap A_\energy \energyact^H$. Then $\sum_{a \in \energyact^H} o^H_a = \sum_{a \in P^H \cap A_\energy} \w_a$.
    
    Now consider an arbitrary matching~$\energyact \subseteq \possenergyact$.
    By \cref{cor:extension-perfect-matching}, we can assume that $\energyact$ is perfect. Let $C$ be an arbitrary directed cycle in $\energyact \dotcup A_\wait$.  By \cref{thm:struc-cycle} (a), we can assume that $\pi$ induces full overlap on all but the arc of $C$ with smallest $t^{\min}$, say $a_0$. Recalling \cref{def:delta}, due to $l \geq 0 $ and $t > 0$, we have $L_C \geq t_{a_0}^{\min}$. As also $U_C + t_{a_0}^{\min} \leq n \cdot u^{\max} \leq T$, we conclude $\delta_C \geq t_{a_0}^{\min}$, and hence $o_{a_0} = 0$ and $o(C) = \w(C) - t_{a_0}^{\min}$ by the formula in \cref{thm:struc-cycle} (b).
    
     Removing $a_0$ from $\energyact$, we still maintain a matching with maximum overlap. We can delete the no-overlap arcs~$a_0^1,\dotsc,a_0^m$ in all cycles~$C_1,\dotsc,C_m$ of $M \dotcup A_\wait$ and obtain 
     $$o(M) = \sum_{k=1}^m (\w(C_k) - t_{a_0^k}^{\min}) = \sum_{k=1}^m \w(C_k \setminus \{a_0\}).$$
     By adding additional arcs, we can connect the paths $C_k \setminus \{a_0\}$ to a Hamiltonian path~$P$. Therefore, $o(M) \le \w(P) \le \w(P^H)$.
\end{proof}

\subsection{Special Case: Equal Acceleration or Equal Braking Times}
\label{sec-SC-ac-equal-br-equal}
In the second case, all trains need the same time for the acceleration (or the braking) process, i.e., 
$t_i^{\ac} = t^{\ac}$ for all $i \in E^{\dep}$ or $t_i^{\br} = t^{\br}$ for all $i \in E^{\arr}$. W.l.o.g., let us assume that all acceleration times are equal, i.e., $t^{\ac} = t^{\ac}_j$ for all $i,j \in [n]$. Then the possible overlap between one braking train and any accelerating train is the same. Hence, it becomes irrelevant for which two trains the departure and the arrival are synchronized. Therefore, the solution of one big cycle of trains in arbitrary order together with a timetable maximizing the overlap on this cycle is optimal.

\begin{proposition}\label{prop:specialcase1}
	Suppose that all acceleration times are equal, i.e., $t^{\ac} = t^{\ac}_j$ for all $i,j \in [n]$, or all braking times are equal, i.e., $t^{\br} = t^{\br}_i$ for all $i,j \in [n]$. Then, there is an optimal solution consisting of one cycle of all lines in arbitrary order. This can be found in polynomial time.
\end{proposition}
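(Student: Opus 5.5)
The plan is to use the fact that with equal acceleration times the weights, and also the bounds $L_C, U_C$ from \cref{def:delta}, no longer depend on which departure is matched to which arrival. The case of equal braking times is symmetric, so I assume $t^{\ac}_j = t^{\ac}$ for all $j$.

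For an energy arc $ji$ we then have $t^{\min}_{ji} = \min\{t^{\ac}, t^{\br}_i\} =: \w_i$ and $t^{\max}_{ji} = \max\{t^{\ac}, t^{\br}_i\}$. Both depend only on the arrival line $i$. Consequently every perfect matching has the same weight $W = \sum_{i} \w_i$. Moreover, for any cycle $C$ of $\energyact \dotcup A_\wait$ with line set $V(C)$,
\[
L_C = \sum_{\ell \in V(C)} \bigl(l_\ell + \w_\ell\bigr), \qquad U_C = \sum_{\ell \in V(C)} \bigl(u_\ell + \max\{t^{\ac}, t^{\br}_\ell\}\bigr),
\]
where $l_\ell, u_\ell$ are the bounds of the waiting arc of line $\ell$. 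So $L_C$ and $U_C$ depend only on the set of lines in $C$, not on their order. They are also additive when cycles are merged.

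Now take an optimal solution. By \cref{cor:extension-perfect-matching} we may assume its matching is perfect, with cycles $C_1,\dots,C_m$. By \cref{thm:struc-cycle}, its overlap is
\[
W - \sum_{k=1}^m \min\{\w_{a_0^k}, \delta_{C_k}\},
\]
where $a_0^k$ is the energy arc of minimum weight in $C_k$. Let $C$ be any single cycle through all lines, in arbitrary order, and let $\w_{\min} = \min_i \w_i$. Again by \cref{thm:struc-cycle}, $C$ attains overlap $W - \min\{\w_{\min}, \delta_C\}$. It therefore suffices to show
\[
\min\{\w_{\min}, \delta_C\} \le \sum_k \min\{\w_{a_0^k}, \delta_{C_k}\}.
\]

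The key step is the bound $\delta_C \le \sum_k \delta_{C_k}$. For each $k$, pick $x_k \in [L_{C_k}, U_{C_k}]$ and $\gamma_k \in \Z$ with $|x_k - \gamma_k T| = \delta_{C_k}$. Then $\sum_k x_k \in [L_C, U_C]$ by additivity, and $\sum_k \gamma_k T \in T\Z$. The distance between these two points is at most $\sum_k \delta_{C_k}$, which gives the bound.

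The desired inequality then follows by a case split:
\begin{itemize}
\item If $\delta_{C_k} \ge \w_{a_0^k}$ for some $k$, the right-hand side is at least $\w_{a_0^k} \ge \w_{\min}$, which is at least the left-hand side.
\item Otherwise the right-hand side equals $\sum_k \delta_{C_k} \ge \delta_C$, which is at least the left-hand side.
\end{itemize}
Hence the single cycle is optimal.

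For the polynomial running time, fix any cyclic order of the lines and compute $L_C$, $U_C$ and $\delta_C$. The timetable is then obtained directly from the explicit construction in the proof of \cref{thm:struc-cycle}. The main obstacle is noticing that $t^{\max}$, and not only $t^{\min}$, becomes arrival-dependent only. This is what makes $U_C$ order-independent and additive; I would check this carefully, since otherwise the Minkowski-sum argument fails.
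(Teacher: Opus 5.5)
Your proof is correct, but it takes a different route from the paper. The paper argues directly on timetables. It takes an optimal solution with the fewest cycles and picks energy arcs $a_1=j_1i_1$ and $a_2=j_2i_2$ in two different cycles. It replaces them by $j_1i_2$ and $j_2i_1$, which merges the cycles, and shifts all events of $C_2$ by one constant so that the two new arcs inherit the tensions $x_{a_1}$ and $x_{a_2}$. Waiting tensions are unchanged, and the equal-times assumption makes the new overlaps equal to the old ones. That argument is elementary, does not use \cref{thm:struc-cycle}, and converts a solution into a single-cycle one with the same overlap arc by arc.

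As written, however, the paper gives $j_1i_2$ the tension $x_{a_1}$. This keeps the departure and changes the arrival, so it preserves overlaps under equal \emph{braking} times. For equal acceleration times the shift must instead let $j_2i_1$ inherit $x_{a_1}$. Your argument needs no such bookkeeping.

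You compare optimal values instead, via the closed form $\w(C)-\min\{t^{\min}_{a_0},\delta_C\}$. Your observation that $t^{\min}_{ji}$ and $t^{\max}_{ji}$ depend only on the arrival makes $W$, $L_C$ and $U_C$ order-independent and additive. The subadditivity $\delta_C\le\sum_k\delta_{C_k}$ together with your case split then finishes the comparison; this is the same mechanism the paper later uses in \cref{lem:crossing} and \cref{thm:pesp-energy-single-cycle}.

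What your route gains is that the two remaining claims become explicit. Because $L_C$ and $U_C$ do not depend on the order of the lines, every Hamiltonian cycle attains the same optimum. The paper's exchange only produces \emph{some} single cycle, so ``arbitrary order'' is not really argued there. The polynomial running time also follows at once from computing $\delta_C$ and applying the construction in \cref{thm:struc-cycle}.
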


\begin{proof}
	Let us assume that there is no optimal solution consisting of a single cycle, and consider an optimal solution~$(\energyact, \pi, x, o)$ with the minimum number of cycles. Let $C_1, C_2$ be two different cycles in $\energyact \dotcup A_\wait$, and let $a_k = (j_k, i_k) \in A_{\energy} \cap C_k$ for $k=1,2$ 
    Consider the alternative solution with $\energyact' \coloneqq (\energyact \setminus \{a_1, a_2\}) \cup \{j_1i_2, j_2i_1\}$. Then $\energyact'$ induces a big cycle on the node set $V(C_1) \cup V(C_2)$. Let $c \coloneqq \pi_{j_1} + x_{a_1} - \pi_{i_2}$, and set
	\[
	\pi'_v \coloneqq
	\begin{cases}
		\pi_v &\text{if } v \in E\setminus V(C_2), \\
		(\pi_v + c) \bmod T &\text{if } v \in V(C_2).
	\end{cases}
	\]
	Then the arc $j_1i_2$ with braking time $t^{\br}_{i_2} $ has the new periodic tension $x_{j_1i_2}' = (\pi'_{i_2}- \pi'_{j_1})\bmod T = (\pi_{i_2} + c - \pi_{j_1})\bmod T = x_{a_1}$, i.e.,  it also has the same overlap as there was on arc $a_1$ because $t^{\ac}_{i_1} = t^{\ac}_{i_2}$.
    Moreover, the arc $j_2i_1$ has periodic tension
	$x'_{j_2i_1} = (\pi_{i_1}' - \pi_{j_2}') \bmod T = (\pi_{i_1} - \pi_{j_2} - c) \bmod T = (\pi_{i_1} - \pi_{j_1} + \pi_{i_2} - \pi_{j_2} - x_{a_1}) \bmod T = x_{a_2}$, i.e., the overlap is also equal to the one on arc $a_2$ as $t^{\ac}_{i_1} = t^{\ac}_{i_2}$. Therefore, together the overlap on the two new arcs is the same as on the two old arcs. So we have found an optimal solution with fewer cycles, which constitutes a contradiction.
\end{proof}

\subsection{Special Case: Acceleration Time Equals Braking Time}

In the following, we investigate the case that each train accelerates as long as it brakes, i.e., $t_i^{\ac} = t_i^{\br}$ for all $i \in [n]$. Hence, we can denote the time that train $i$ needs for accelerating/braking simply by $t_i$. We assume that the $n$ trains in a one-station network are ordered such that $t_i \leq t_{i+1}$ for all $i \in [n-1]$. 
First we describe the structure of the matching induced by a maximum-weight Hamiltonian cycle.

\begin{lemma}\label{lem:t-matching}
	The matching $\energyact_H \coloneqq \{ ((i, \dep),(i-1, \arr)) \mid i \in \{2, \dots , n \} \} \cup \{((1, \dep),(n, \arr)) \}$ is of maximum weight w.r.t.\ $\w$ among the matchings $\energyact$ that result in exactly one cycle $C = \energyact\dotcup A_{\wait}$.
\end{lemma}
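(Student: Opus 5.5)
The plan is to read the statement as a maximum-weight Hamiltonian cycle problem on the contracted graph $G=([n],\cA)$ of \cref{lem:equivalence-hamiltonian-cycles}. A matching $\energyact$ for which $\energyact \dotcup A_\wait$ is a single cycle corresponds to a cyclic permutation of the lines, i.e., to a Hamiltonian cycle in $G$. Since $t^{\ac}_\ell = t^{\br}_\ell = t_\ell$, the arc weight is $\w^G(k\ell) = \min\{t_k, t_\ell\}$, which is symmetric in $k$ and $\ell$. So orientation is irrelevant, and I can treat the cycle as an undirected Hamiltonian cycle $H$ on $[n]$ with edge weights $\min\{t_k,t_\ell\}$. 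The cycle induced by $\energyact_H$ is $n \to n-1 \to \dots \to 1 \to n$. Its edges are $\{i-1,i\}$ for $i=2,\dots,n$ together with $\{1,n\}$, so its weight is $t_1 + \sum_{i=1}^{n-1} t_i$. The cases $n\le 2$ are trivial, so I assume $n \ge 3$.

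The key step is a layer-cake decomposition of the weight of an arbitrary Hamiltonian cycle $H$. Set $t_0 \coloneqq 0$ and use the ordering $t_1 \le \dots \le t_n$. For every edge $\{k,\ell\}$ with $k<\ell$, I write
\[
\min\{t_k,t_\ell\} = t_k = \sum_{m=1}^{n} (t_m - t_{m-1})\,[\,\min\{k,\ell\} \ge m\,].
\]
Summing over all edges gives
\[
\w^G(H) = \sum_{m=1}^{n} (t_m - t_{m-1})\, e_H(m),
\]
where $e_H(m)$ is the number of edges of $H$ with both endpoints in $\{m,\dots,n\}$. All coefficients $t_m - t_{m-1}$ are nonnegative, so it suffices to bound each $e_H(m)$.

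Next I bound the counts. For $m=1$ we have $e_H(1)=n$. For $m \ge 2$, the vertex set $\{m,\dots,n\}$ is a proper subset of the vertices of the Hamiltonian cycle $H$. Hence $H$ restricted to it is a disjoint union of paths, i.e., a forest, and it has at most $n-m$ edges. Therefore
\[
\w^G(H) \le n\,t_1 + \sum_{m=2}^{n} (t_m - t_{m-1})(n-m).
\]
For the cycle of $\energyact_H$, the edges inside $\{m,\dots,n\}$ form the path $m, m+1, \dots, n$, which has exactly $n-m$ edges for every $m\ge 2$. So equality holds, and $\energyact_H$ attains the bound and is of maximum weight among all single-cycle matchings.

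The only step needing care is the forest argument for $m\ge2$, namely that a proper vertex subset of a Hamiltonian cycle induces no cycle. This holds because the only cycle contained in $H$ is $H$ itself. Ties among the $t_i$ cause no problem, since the decomposition uses only $\min\{k,\ell\}$ and nonnegative increments. A final routine check is that telescoping the bound reproduces $t_1+\sum_{i=1}^{n-1}t_i$.
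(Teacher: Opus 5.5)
Your proof is correct and uses the same idea as the paper. Your bound $e_H(m)\le n-m$, which holds because a proper vertex subset of a Hamiltonian cycle induces a forest, is the complement form of the paper's count that at least $i+1$ arcs have weight at most $t_i$. The one difference is presentational: your layer-cake identity $\w^G(H)=\sum_{m}(t_m-t_{m-1})\,e_H(m)$ makes the paper's informal conclusion (``we cannot do better'') explicit, and it handles ties more cleanly than the paper's separate remark on $t_n = t_{n-1} = \dots = t_{n-k}$.
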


\begin{proof}
Let $\energyact^{*}$ be a matching of maximum weight such that $\energyact^{*} \dotcup A_{\wait}$ is a cycle. Now, we know that for the arcs in $\energyact^{*}$ incident to the nodes $(1, \dep)$ and $(1, \arr)$, we obtain the minimum weight $t_1$. Hence, for the second smallest train, we also know that the weight $t_2$ must be obtained on at least one of the arcs in $\energyact^{*}$ incident to $(2, \dep)$ or $(2, \arr)$. Otherwise, they would both be matched to nodes with a smaller weight, but such a matching would result in a closed cycle and contradict the assumption that $\energyact^{*} \dotcup A_{\wait}$ forms one cycle. Similar, for any $0 < i < n$, we can argue that $\energyact^{*}$ must contain at least $i+1$ arcs with a weight smaller or equal to the one of the $i^{\text{th}}$ smallest train. Finally, we know that, in case it holds $t_{n} > t_{n-1}$, there is no arc in $\energyact^{*}$ with the weight $t_n$. However, if it holds $t_{n} = t_{n-1} = \dots = t_{n-k}$, then there can be at most $k-1$ arcs in $\energyact^{*}$ with weight $t_n$.
This means that we cannot do better than obtaining the weights of the matching $\energyact_H$, i.e. $t_1 + \sum_{i=1}^{n-1} t_{i}$.
\end{proof}

Next, we want to calculate the optimal objective value of the overlap under the condition that the given trains are supposed to be matched into one cycle.

\begin{lemma}
\label{lem:t-matching-overlap}
	The matching $\energyact_H$ from \Cref{lem:t-matching} yields a cycle $C = \energyact_H \dotcup A_{\wait}$ with maximum overlap 
	  \[o(C) = \sum_{i =1}^{n} t_i - t_{n} + \max\{t_{1} - \delta_{C},\, 0 \},\]
	where $t_1$ is the smallest acceleration/braking time of all $i \in C$ and $t_n$ is the largest.      
\end{lemma}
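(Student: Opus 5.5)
The plan is to apply \Cref{thm:struc-cycle} directly to the single cycle $C = \energyact_H \dotcup A_\wait$, after computing the weights $\w_a$ of its energy arcs and identifying an arc $a_0$ of minimum weight.

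\textbf{Step 1: the weights.} Since $t_i^{\ac} = t_i^{\br} = t_i$ and $t_1 \le \dots \le t_n$, each arc $((i,\dep),(i-1,\arr))$ with $i \in \{2,\dots,n\}$ has $\w = \min\{t_i, t_{i-1}\} = t_{i-1}$. The closing arc $((1,\dep),(n,\arr))$ has $\w = \min\{t_1, t_n\} = t_1$. Hence
\[
\w(C) = t_1 + \sum_{i=1}^{n-1} t_i = \sum_{i=1}^{n} t_i - t_n + t_1,
\]
which is the value already noted in \Cref{lem:t-matching}.

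\textbf{Step 2: the minimum-weight arc.} Take $a_0 \coloneqq ((1,\dep),(n,\arr))$. Its weight is $t^{\min}_{a_0} = t_1$, and $t_1 \le \w_a$ for every $a \in C \cap \energyact_H$, so $a_0$ satisfies the hypothesis of \Cref{thm:struc-cycle}. (The arc $((2,\dep),(1,\arr))$ also has weight $t_1$ and would serve equally well; only $t^{\min}_{a_0}$ enters the formula.)

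\textbf{Step 3: apply the theorem.} By \Cref{thm:struc-cycle}(b) and (c), the maximum overlap over all timetables with the energy activities fixed to $\energyact_H$ is
\[
o(C) = \w(C) - \min\{t_1, \delta_C\}.
\]
Here $\delta_C$ is taken from \Cref{def:delta} for this particular cycle, with $L_C = \sum_{a \in A_\wait} l_a + \w(C)$ and $U_C = \sum_{a \in A_\wait} u_a + \sum_{a \in C \cap \energyact_H} t_a^{\max}$. Substituting Step 1 and using the identity $t_1 - \min\{t_1, \delta_C\} = \max\{t_1 - \delta_C, 0\}$ gives
\[
o(C) = \sum_{i=1}^{n} t_i - t_n + \max\{t_1 - \delta_C, 0\},
\]
which is the claimed formula.

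\textbf{Main obstacle.} The computation itself is routine. The delicate point is reading the statement correctly: "maximum overlap" means maximum over timetables for the fixed matching $\energyact_H$, which is exactly what part (c) of \Cref{thm:struc-cycle} provides. It does not mean maximum over all single-cycle matchings. A bound of that second kind would require comparing $\delta_C$ across different cycles, which \Cref{lem:t-matching} does not do. I would state this interpretation explicitly in the proof. I would also check the degenerate case $n = 1$, where the only energy arc is the self-loop $((1,\dep),(1,\arr))$ of weight $t_1$ and the formula still reads $t_1 - \min\{t_1, \delta_C\}$.
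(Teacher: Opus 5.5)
Your proposal is correct and follows the paper's proof: compute $\w(C) = t_1 + \sum_{i=1}^{n-1} t_i$ as in \Cref{lem:t-matching}, then substitute into \cref{thm:struc-cycle}~(b), with (c) giving maximality over timetables for the fixed matching. Your closing identity $t_1 - \min\{t_1,\delta_C\} = \max\{t_1-\delta_C,0\}$ is the correct one, whereas the paper's displayed last step writes $-\min\{0,\, t_1-\delta_C\}$, a sign slip that your version avoids.
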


\begin{proof}
We apply the formula in \cref{thm:struc-cycle} (b). Computing $\w(C)$ as in the proof of \cref{lem:t-matching},
\[ o(C) = \w(C)  - \min\{t_1, \delta_C\} = t_1 + \sum_{i=1}^{n-1} t_{i} - \min\{t_1, \delta_C\} = \sum_{i=1}^n t_i - t_n - \min\{0, t_1 - \delta_C\}.\]
\end{proof}

Now, we go back to the original problem and consider a matching $\energyact$ that maximizes the total overlap. Then for each connected component (cycle) of $\energyact \dotcup A_{\wait}$, we can assume that the cycle follows the order described above. Therefore, it suffices to specify which trains are contained in each cycle.
Let us now investigate the relation of the cycles of $\energyact \dotcup A_{\wait}$ to each other.

\begin{definition}
	Let $C_1, C_2 \subseteq \energyact\dotcup A_{\wait}$ be two cycles obtained from a matching of energy arcs. We denote by $s_1 = \text{arg}\min \{t_i \mid i \in C_1 \}$ the train with smallest acceleration/braking time in $C_1$ and by $s_2 = \text{arg}\min \{t_i \mid i \in C_2 \}$ the train with smallest acceleration/braking time in $C_2$. Without loss of generality, let $t_{s_1} \leq t_{s_2}$. Let $l_1 = \text{arg}\max \{t_i \mid i \in C_1 \}$ denote the train with largest acceleration/braking time in $C_1$. 
	
	$C_1$ and $C_2$ are called \textit{crossing cycles} if $t_{l_1} \geq t_{s_2}$.
\end{definition}

Note that for crossing cycles, we have $\min\{t_{l_1}, t_{l_2}\} \ge t_{s_2}$.

\begin{lemma}\label{lem:crossing}
	Let $C_1 \cup C_2 = \energyact\dotcup A_{\wait}$ be two crossing cycles obtained from a matching of energy arcs on a one-station network with $n$ trains. Then, the maximum overlap obtained by $\energyact$ is smaller or equal to the maximum overlap obtained by a maximum-weight Hamiltonian path matching $\energyact_H$, i.e. $o(\energyact) \leq o(\energyact_H)$.
\end{lemma}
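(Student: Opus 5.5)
The plan is to compare both sides through the closed formula of \cref{thm:struc-cycle}~(b). I read $o(\energyact_H)$ as the maximum overlap attainable with the single-cycle matching of \cref{lem:t-matching}, as evaluated in \cref{lem:t-matching-overlap}. If the statement is meant literally as the Hamiltonian-path solution, that solution only guarantees $\sum_i t_i - t_n$ by \cref{prop:lowerbound-ham}, and the argument below would have to be redone against that smaller value. Write $g \coloneqq \min\{t_{l_1},t_{l_2}\} - t_{s_2}$; the crossing property says exactly $g \ge 0$.

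\textbf{Step 1: bound the left-hand side.} By \cref{thm:struc-cycle} and the argument of \cref{lem:t-matching}, applied to each cycle separately, I may assume that each $C_k$ is ordered as in \cref{lem:t-matching}. This ordering maximises $\w(C_k)$ among matchings on the vertex set of $C_k$, and I expect it also maximises the resulting overlap; this needs a short check. Then $\w(C_k) = \sum_{i\in C_k} t_i + t_{s_k} - t_{l_k}$ and
\[
o(\energyact) \le \sum_{i=1}^n t_i + t_{s_1}+t_{s_2}-t_{l_1}-t_{l_2} - \lambda_1-\lambda_2, \qquad \lambda_k \coloneqq \min\{t_{s_k},\delta_{C_k}\}.
\]
Since $t_n=\max\{t_{l_1},t_{l_2}\}$ and $t_{s_1}=t_1$, this simplifies to $o(\energyact)\le \sum_i t_i - t_n + t_1 - g - \lambda_1-\lambda_2$.

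\textbf{Step 2: the right-hand side and the key comparison.} By \cref{lem:t-matching-overlap}, $o(\energyact_H) = \sum_i t_i - t_n + \max\{t_1-\delta_C,0\}$, where $C$ is the merged single cycle. So it suffices to show
\[
t_1 - g - \lambda_1 - \lambda_2 \le \max\{t_1-\delta_C,0\}.
\]
To get this I compare the full-overlap intervals of \cref{def:delta}. In the ordered cycles the $t^{\max}$-values of the energy arcs are $t_i$ for the arcs $(i,\dep)\to(i-1,\arr)$ and $t_{l_k}$ for the closing arc. Hence their sum is $\sum_{i\in C_k} t_i$, and $U_C = U_{C_1}+U_{C_2}$. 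Only the $t^{\min}$-sums change, by the weight gain computed above, so $L_C = L_{C_1}+L_{C_2}+g$.

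Choose $x_k\in[L_{C_k},U_{C_k}]$ at distance $\delta_{C_k}$ from $T\Z$. Then $x_1+x_2 \in [L_{C_1}+L_{C_2},\,U_C]$ lies within $\delta_{C_1}+\delta_{C_2}$ of $T\Z$. Moving it into $[L_C,U_C]$ costs at most $g$, which gives
\[
\delta_C \le \delta_{C_1}+\delta_{C_2}+g.
\]

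\textbf{Step 3: case analysis.} If $\delta_{C_k}\ge t_{s_k}$ for some $k$, then $\lambda_k = t_{s_k}\ge t_1$. The left side of the key inequality is then at most $0$, and we are done. Otherwise $\lambda_k=\delta_{C_k}$ for both $k$. The left side becomes $t_1 - g - \delta_{C_1}-\delta_{C_2} \le t_1-\delta_C$ by Step~2, which is at most the maximum on the right.

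\textbf{Main obstacle.} I expect the hard part to be Step~2, specifically the precise bookkeeping of $L_C$ and $U_C$ for the merged cycle versus the two ordered cycles. The sign and size of the shift $g$ must come out exactly as claimed. One also has to handle the case where the sum $x_1+x_2$ falls below $L_C$, so that the shift by at most $g$ is really needed. The remaining steps only use the formulas of \cref{thm:struc-cycle} and \cref{lem:t-matching-overlap}.
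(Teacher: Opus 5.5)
Your proof is essentially the paper's argument. Both apply \cref{lem:t-matching-overlap} to each sorted cycle and use the same shift $g$ (the paper's $k$) and the same estimate $\delta_C \le \delta_{C_1}+\delta_{C_2}+g$. Your two cases are a compressed form of the paper's four. The ``short check'' in Step~1 is the argument of \cref{thm:pesp-energy-single-cycle} applied to each cycle. The paper also takes it for granted, and reordering keeps the vertex sets, so the crossing property is preserved.

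Step~2 contains one false claim, though it does not affect your conclusion. In a sorted cycle the $t^{\max}$-values are $t_i$ for $i\in C_k\setminus\{s_k\}$, plus $t_{l_k}$ for the closing arc. Their sum is therefore $\sum_{i\in C_k}t_i - t_{s_k}+t_{l_k}$, not $\sum_{i\in C_k}t_i$.

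Moreover, $L_C+U_C$ does not depend on the matching, because every arc contributes $t^{\min}+t^{\max}=t^{\ac}_j+t^{\br}_i$. So the gain of $g$ in $L$ forces $U_C=U_{C_1}+U_{C_2}-g$, not $U_C=U_{C_1}+U_{C_2}$. For example, with $t=(1,2,3,4)$, $C_1=\{1,3\}$ and $C_2=\{2,4\}$, the $t^{\max}$-sums are $6+8$ for the two cycles but $13$ for the merged cycle.

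Consequently $x_1+x_2$ lies in $[L_C-g,\,U_C+g]$, and it may overshoot $U_C$ as well as undershoot $L_C$. Since $L_C\le U_C$, in either case it is within $g$ of $[L_C,U_C]$. Hence $\delta_C\le\delta_{C_1}+\delta_{C_2}+g$ still holds, and the rest of your argument goes through.
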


\begin{proof}
	For the overlaps we obtain by \cref{lem:t-matching-overlap}
	\begin{align*}
		o(C_1) &= \sum_{i \in C_1} t_i - t_{l_1} + \max\{t_{s_1} - \delta_{C_1},\, 0 \}, \\
		o(C_2) &= \sum_{i \in C_2} t_i - t_{l_2} + \max\{t_{s_2} - \delta_{C_2},\, 0 \}, \\
		o(C) &= \sum_{i=1}^n t_i - t_n + \max\{t_1 - \delta_{C},\, 0 \}.
	\end{align*}
	This means
	\begin{align*}
		o(C_1) + o(C_2) = \sum_{i=1}^n t_i - t_{l_1} - t_{l_2} + \max\{t_{s_1} - \delta_{C_1},\, 0 \} + \max\{t_{s_2} - \delta_{C_2},\, 0 \}.
	\end{align*}
	We want to show that $o(C_1) + o(C_2) \leq o(C)$.
	
	\begin{itemize}
		\item If $t_{s_1} - \delta_{C_1} \leq 0$ and $t_{s_2} - \delta_{C_2} \leq 0$, then 
		\begin{align*}
			o(C_1) + o(C_2) = \sum_{i=1}^n t_i - t_{l_1} - t_{l_2} \leq \sum_{i=1}^n t_i - t_n \leq o(C),
		\end{align*}
		as one of $t_{l_1}$ and $t_{l_2}$ equals $t_n$.
		\item If $t_{s_1} - \delta_{C_1} \geq 0$ and $t_{s_2} - \delta_{C_2} \leq 0$, then 
		\begin{align*}
			o(C_1) + o(C_2) = \sum_{i=1}^n t_i - t_{l_1} - t_{l_2} + t_{s_1} - \delta_{C_1} \leq \sum_{i=1}^n t_i - t_n \leq o(C),
		\end{align*}
		because $\max\{t_{l_1}, t_{l_2}\} = t_n$, $\min\{t_{l_1}, t_{l_2}\} \geq t_{s_1}$, and $\delta_{C_1} \geq 0$.
		\item If $t_{s_1} - \delta_{C_1} \leq 0$ and $t_{s_2} - \delta_{C_2} \geq 0$, then 
		\begin{align*}
			o(C_1) + o(C_2) = \sum_{i=1}^n t_i - t_{l_1} - t_{l_2} + t_{s_2} - \delta_{C_2} \leq \sum_{i=1}^n t_i - t_n \leq o(C),
		\end{align*}
		because $\max\{t_{l_1}, t_{l_2}\} = t_n$, $\min\{t_{l_1}, t_{l_2}\} \geq t_{s_2}$ as $C_1$ and $C_2$ are crossing cycles, and $\delta_{C_2} \geq 0$.
		\item Finally, if $t_{s_1} - \delta_{C_1} \geq 0$ and $t_{s_2} - \delta_{C_2} \geq 0$, then
		\begin{align*}
			o(C_1) + o(C_2) &= \sum_{i=1}^n t_i - t_{l_1} - t_{l_2} + t_{s_1} - \delta_{C_1} + t_{s_2} - \delta_{C_2}\\ & = \sum_{i=1}^n t_i - t_{n} + t_{s_1} - (t^{\min}_{l} - t_{s_2}) - \delta_{C_1} - \delta_{C_2}  \\
            &= \sum_{i=1}^n t_i - t_{n} + t_1 - k  - \delta_{C_1} - \delta_{C_2},
		\end{align*}
		as it holds $t_{s_1} = t_1$ and $\max\{t_{l_1}, t_{l_2}\} = t_n$. Further, we define $t^{\min}_{l}:= \min\{t_{l_1}, t_{l_2}\} \geq t_{s_2}$, which holds as $C_1$ and $C_2$ are crossing cycles, and set $k \coloneqq t^{\min}_{l}- t_{s_2}$.
        If we consider the sum of the lower bounds and minima of the departure/arrival times on the energy arcs on cycles, we obtain
        \begin{align*}
            L_C &= L_{C_1} + L_{C_2} + k.
        \end{align*}

        Further, we know that that the sum of all lower and all upper bounds is independent of the matching. Hence, it holds
        \begin{align*}
            L_C + U_C &= L_{C_1} + U_{C_1} + L_{C_2} + U_{C_2}.
        \end{align*}
This yields
\begin{align*}
            U_C &= U_{C_1} + U_{C_2} - k.
        \end{align*}

        Hence, we get the following estimation for $\delta_C$:
        \begin{align*}
         \delta_C = d([L_C, U_C], T\Z) &=  d([L_{C_1} + L_{C_2} + k, U_{C_1} + U_{C_2} - k], T\Z) \\ 
        &\leq 
        d([L_{C_1} + L_{C_2}, U_{C_1} + U_{C_2}], T\Z) + k \\ 
        &\le  d([L_{C_1},\, U_{C_1}], T\Z) + d([L_{C_2}, U_{C_2}], T\Z) + k
        \\
        &= \delta_{C_1} + \delta_{C_2} + k.
        \end{align*}
		Therefore, we obtain the following result for the overlap of $C$:
		\begin{align*}
        o(C) &= \sum_{i=1}^{n} t_i - t_n + t_1 - \min\{t_1, \delta_C\}\\
        &\geq \sum_{i=1}^{n} t_i - t_n + t_1 - \min\{t_1, \delta_{C_1} + \delta_{C_2} + k\}\\
        &\geq \sum_{i=1}^{n} t_i - t_n + t_1  - k - \delta_{C_1} - \delta_{C_2} \\
        &= o(C_1) + o(C_2).
		\end{align*}
	\end{itemize}
\end{proof}

\begin{corollary}\label{cor:nocrossing}
    There is an optimal matching of energy activities that does not induce any crossing cycles.
\end{corollary}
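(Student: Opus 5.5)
Starting from an arbitrary optimal solution, I will eliminate crossing cycles one merge at a time, applying \cref{lem:crossing} to pairs of cycles.

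By \cref{cor:extension-perfect-matching}, take an optimal perfect matching $\energyact$, so that $\energyact \dotcup A_\wait$ splits into cycles $C_1,\dots,C_m$. By \cref{thm:struc-cycle}, the optimal overlap is additive over these components: each cycle's overlap depends only on its own vertex set and energy arcs. Hence I may change the matching inside a union of components without affecting the overlap of the remaining cycles. Among all optimal perfect matchings, I choose one with the minimum number of cycles. I will then show that this matching has no crossing cycles.

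Suppose instead that two cycles $C_1, C_2$ of this matching cross. Restrict attention to the sub-instance on the lines $V(C_1) \cup V(C_2)$; this is again a one-station network with all energy activities possible and $t^{\ac}_i = t^{\br}_i$, and $\energyact \cap (C_1 \cup C_2)$ is a matching there inducing exactly the two crossing cycles. \Cref{lem:crossing} applied to this sub-instance gives $o(C_1)+o(C_2) \le o(C)$, where $C$ is the single cycle induced by the matching $\energyact_H$ of \cref{lem:t-matching}, built on these lines in sorted order, with its optimal timetable from \cref{lem:t-matching-overlap}. I replace the energy arcs of $C_1 \cup C_2$ by those of $\energyact_H$ and keep all other cycles and their timetables unchanged. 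This is legitimate because timetables of distinct components are independent: the only feasibility constraints are on waiting arcs, which lie inside components. The new matching is still perfect and still optimal, and it has one fewer cycle, contradicting minimality. So the chosen optimal matching has no crossing cycles.

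The main obstacle is justifying the restriction to a sub-instance in \cref{lem:crossing}: that lemma is stated for two cycles covering all $n$ trains. I will check that its proof uses only the lines in $C_1 \cup C_2$ (the sums over $i$, the identity $L_C+U_C = L_{C_1}+U_{C_1}+L_{C_2}+U_{C_2}$, and $t_{s_1}$ being the overall minimum) and hence applies verbatim with $[n]$ replaced by $V(C_1)\cup V(C_2)$. I will also verify that the Hamiltonian-path matching realizes the cycle overlap formula of \cref{lem:t-matching-overlap}, which it does by \cref{thm:struc-cycle}(b).
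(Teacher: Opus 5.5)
Your proposal is correct and matches the argument the paper leaves implicit. The paper states \cref{cor:nocrossing} without proof as a consequence of \cref{lem:crossing}: replacing two crossing cycles by the sorted single cycle $\energyact_H$ on $V(C_1)\cup V(C_2)$ never decreases the overlap and lowers the number of cycles. You formalize this cleanly by taking an optimal perfect matching with the fewest cycles and applying the lemma to the sub-instance on $V(C_1)\cup V(C_2)$.

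One point to record when you check the lemma's proof: its formulas for $o(C_1)$ and $o(C_2)$ assume each cycle is itself in sorted order (the paper's ``we can assume''). This is harmless, because any other single cycle on the same lines achieves at most this overlap, by \cref{lem:t-matching} and \cref{thm:pesp-energy-single-cycle} applied to that sub-instance.
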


We can exploit this knowledge in order to obtain an efficient algorithm. Any cycle in such a solution consists of subsequent trains according to the acceleration and braking times. Let $C_{s}^{l} = \{ s, s+1, \dots, l-1, l\}$ denote the cycle containing all trains from $s$ to $l$ with $s \leq l$. Then $o(C_{s}^{l}) = \sum_{i=s}^{l-1} t_i  + \max\{t_{s} - \delta_{C_s}^{l},\, 0 \}$.

\begin{theorem}
	The dynamic programming algorithm given in \cref{alg:dynamicprogram} computes an optimal solution to \pespenergy on a \osn with $n$ trains in $\mathcal{O}(n^2)$ timein the case that the acceleration time of each train equals its braking time.
\end{theorem}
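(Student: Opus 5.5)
We would prove the statement by showing that an optimal solution corresponds to a partition of the sorted trains $1,\dots,n$ (with $t_1\le\dots\le t_n$) into consecutive blocks $\{s,\dots,l\}$, each closed to a cycle $C_s^l$ in the order of \Cref{lem:t-matching}. The dynamic program then optimizes over these interval partitions.

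\textbf{Step 1: structure of optimal solutions.} By \Cref{cor:extension-perfect-matching}, we may assume the optimal matching $\energyact$ is perfect, so $\energyact\dotcup A_\wait$ decomposes into vertex-disjoint cycles. Within each cycle on a fixed train set, the overlap depends only on that cycle, by \Cref{thm:struc-cycle}. It is therefore maximized by reordering the cycle as in \Cref{lem:t-matching}, which yields the value of \Cref{lem:t-matching-overlap}. To see this, note that $L_C+U_C$ is independent of the order, and the order from \Cref{lem:t-matching} maximizes $\w(C)$. Arguing exactly as in the proof of \Cref{lem:crossing}, a larger $\w(C)$ shrinks the interval $[L_C,U_C]$ symmetrically, so $\delta_C$ can grow by at most the weight gain. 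Hence the order from \Cref{lem:t-matching} is at least as good as any other ordering.

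\textbf{Step 2: eliminating crossings.} By \Cref{cor:nocrossing}, we may further assume no two cycles cross. The corollary is stated via \Cref{lem:crossing} for two cycles. To apply it to many cycles, we would merge one crossing pair locally while leaving the other cycles untouched, which is valid because the overlap is additive over components. Each merge strictly decreases the number of cycles, so the process terminates. Once no two cycles cross, their $t$-ranges are disjoint intervals. Ties $t_i=t_{i+1}$ can be resolved by relabeling, so the cycles occupy consecutive index blocks $C_s^l$.

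\textbf{Step 3: the dynamic program.} Define $F(0)=0$ and
\[F(l)=\max_{1\le s\le l}\bigl\{F(s-1)+o(C_s^l)\bigr\}, \qquad o(C_s^l)=\sum_{i=s}^{l-1}t_i+\max\{t_s-\delta_{C_s^l},0\}.\]
Steps 1 and 2 show that $F(n)$ is the optimal value. Backtracking the maximizing choices yields the matching, and \Cref{thm:struc-cycle} supplies the timetable for each cycle.

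\textbf{Running time.} To reach $\mathcal O(n^2)$, we precompute prefix sums of $t_i$, of $l_a$, and of $u_a$. Then $L_{C_s^l}$ and $U_{C_s^l}$ are available in $\mathcal O(1)$:
\[L_{C_s^l}=\sum_{i=s}^{l} l_{w_i}+t_s+\sum_{i=s}^{l-1}t_i,\qquad U_{C_s^l}=\sum_{i=s}^{l} u_{w_i}+t_l+\sum_{i=s+1}^{l}t_i,\]
where $w_i$ denotes the waiting activity of train $i$. From these, $\delta_{C_s^l}$ is obtained in $\mathcal O(1)$ via floor divisions by $T$. There are $\mathcal O(n^2)$ pairs $(s,l)$, each evaluated in constant time, giving the claimed bound.

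\textbf{Main obstacle.} The delicate part is Step 1's reordering claim together with the multi-cycle extension of \Cref{cor:nocrossing}, in particular checking that the $\delta$-versus-weight trade-off always favors the order from \Cref{lem:t-matching}. We would handle it with the same triangle-inequality estimate on $d(\cdot,T\Z)$ used in \Cref{lem:crossing}.
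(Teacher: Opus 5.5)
Your proposal is correct and follows the paper's proof. Using \Cref{cor:nocrossing} together with the ordering of \Cref{lem:t-matching}, you reduce to partitions of the sorted trains into consecutive blocks $C_s^l$, which gives the recurrence, and you evaluate each $o(C_s^l)$ in constant time (you via prefix sums with correct closed forms for $L_{C_s^l}$ and $U_{C_s^l}$, the paper via incremental updates); your Step 1 is in fact more careful than the paper, which cites \Cref{lem:t-matching} (a maximum-weight statement only) where the maximum-overlap argument you give, namely that of \Cref{thm:pesp-energy-single-cycle}, is what is actually needed.
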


\begin{algorithm}[ht]
	\caption{Dynamic Program}\label{alg:dynamicprogram}
	
	\KwIn{\pespenergy instance $(\cE_n^{\energy}, T, l, u, t^{\br}, t^{\ac})$ with $t^{\br}_i = t^{\ac}_i$ for all $i \in [n]$}
	\KwOut{optimal overlap }
	$\opt(0) \gets 0 $ \\
	\For{$i \leftarrow 1$ \KwTo $n$}{
		
		$\opt(i) \leftarrow \underset{j=1, \dots, i}{\max} \{ o(C_{j}^{i}) + \opt( j-1) \}$
	}
	
	\KwRet{$\opt(n)$}
\end{algorithm}

\begin{proof}
    We show that the optimum objective value~$\opt(i)$ for the first $i$ trains, $i \in \{0,\dotsc,n\}$, satisfies the recurrence relation used in the algorithm:
    \[\opt(i) = \begin{cases*} 0 &for $i=0$,\\ \max_{j=1,\dotsc,i} o(C_j^i) + \opt(j-1) &else.\end{cases*}\]
    This is trivial for $i=0$. For every $j$, combining the optimal solution for the first $j-1$ trains with the cycle $C_j^i$ yields a solution of value $o(C_j^i) + \opt(j-1)$, so $\opt(i) \ge o(C_j^i) + \opt(j-1)$ for all $j = 0,\dotsc,i$. On the other hand, by \cref{cor:nocrossing}, there is an optimal solution for the first $i$ items without crossing cycles. Therefore, it includes a cycle consisting of the trains $j,\dotsc,i$ for some $j \in \{0,\dotsc,i\}$, which, by \cref{lem:t-matching}, can be assumed to be $C_j^i$. Since the remainder of the optimal solution is a feasible solution for the first $j-1$ trains, its objective value is upper bounded by $\opt(j-1)$. Therefore, $\opt(i) \le o(C_j^i) + \opt(j-1)$.
    
    Now it follows by induction on $i$ that the algorithm correctly computes the optimal objective values of the subinstances.

    Second, let us show that the algorithm can be implemented to run in $\mathcal O(n^2)$ time. To this end, we store in iteration~$i$ the values $L_{C_j^i}$, $U_{C_j^i}$ and $T_j^i \coloneqq \sum_{k=j}^{i-1} t_k$ for $j=1,\dotsc,i$. Based on these three values, the overlap $o(C_j^i) = T_j^i + \max\{t_j - d([L_{C_j^i}, U_{C_j^i}], T\mathbb Z), 0\}$ can be computed in constant time. Also the new values in the next iteration~$i+1$ can be computed in constant time for each $j=1,\dotsc,i$ as 
    \begin{align*}
     L_{C_j^{i+1}} &= L_{C_j^i} + t_i + l_{((i+1,\arr),(i+1,\dep))},\\ 
     U_{C_j^{i+1}} &= U_{C_j^i} + t_{i+1} + u_{((i+1,\arr),(i+1,\dep))}, \\
     T_j^{i+1} &= T_j^i + t_i.
     \end{align*}
     Moreover, 
     \begin{align*}
      L_{C_{i+1}^{i+1}} &= t_{i+1} + l_{((i+1,\arr),(i+1,\dep))}, \\
      U_{C_{i+1}^{i+1}} &= t_{i+1} + u_{((i+1,\arr),(i+1,\dep))},\\ 
      T_{i+1}^{i+1} &= 0.
     \end{align*}
     Therefore, for every $1 \le j \le i \le n$, the algorithm requires constant running time, and there are $\mathcal O(n^2)$ such pairs.
\end{proof}

By backtracking the choices of the algorithm, an optimal solution can be reconstructed.

\section{Cycle-Restricted Variants of \pespenergy}
\label{sec-Variants}

While we were able to show that \pesppassenger is NP-hard on a \osn, for \pespenergy it is still open whether solutions can be found in polynomial time for one-station networks. 
In this section, we determine the complexity status of two variants of \pespenergy. These variants of the original problem demand a certain structure of the matching of energy arcs in the solution. They differ in the amount of cycles that are allowed to be formed by the (perfect) matching together with the waiting activities $\energyact\dotcup A_{\wait}$ in a feasible solution. 

\subsection{Exactly One Cycle of Trains Induced by $\energyact$}

In the following we will show that the cycle obtained by a maximum-weight Hamiltonian path has maximum overlap among all solutions whose matching induces a single cycle.

\begin{theorem}
    \label{thm:pesp-energy-single-cycle}
	Let $\energyact^H$ be a matching specifying the energy activities in a \pespenergy instance~$\mathcal E_n^\energy$ on a one-station network such that $C^H = \energyact^H \cup A_{\wait}$ is a Hamiltonian cycle in $\mathcal E_n^{\energy}$ of maximum weight w.r.t.\ $\w \coloneqq t^{\min}$.  Then, there is a timetable $\pi^H$ such that the solution $(\energyact^H, \pi^H, x^H, o^H)$ results in the maximum overlap among all solutions $(\energyact, \pi, x, o)$ that yield a single cycle $\energyact \dotcup A_{\wait}$.
\end{theorem}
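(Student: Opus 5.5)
The plan is to reduce the comparison between single-cycle solutions to the closed formula of \cref{thm:struc-cycle}(b), and then to compare the quantities $\w(C)$ and $\delta_C$ across different Hamiltonian cycles. The second comparison rests on the observation that $L_C+U_C$ does not depend on the matching.

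\emph{Step 1: the formula for a single cycle.} Let $\energyact$ be any matching such that $C=\energyact\dotcup A_\wait$ is a single cycle. Then $C$ is a Hamiltonian cycle and $\energyact$ is perfect. By \cref{thm:struc-cycle}(b),(c), the maximum overlap for fixed $\energyact$ is $o(C)=\w(C)-\min\{t^{\min}_{a_0},\delta_C\}$. Here $a_0$ is a minimum-weight energy arc of $C$.

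As in the proof of \cref{lem:pathtour}, let $k$ be the line minimizing $\min\{t^{\ac}_k,t^{\br}_k\}$, and set $\w_{\min}\coloneqq \min\{t^{\ac}_k,t^{\br}_k\}$. Every arc entering $(k,\arr)$, or every arc leaving $(k,\dep)$, has weight exactly $\w_{\min}$. No energy arc has smaller weight. Since $C$ passes through line $k$, it contains such an arc. Hence $t^{\min}_{a_0}=\w_{\min}$ for every Hamiltonian cycle, so
\[
o(C)=\w(C)-\min\{\w_{\min},\delta_C\}.
\]
We take $\pi^H$ to be the timetable produced by \cref{thm:struc-cycle} for $C^H$.

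\emph{Step 2: the interval $[L_C,U_C]$ is symmetric about a fixed centre.} Because $\energyact$ is perfect, each departure $j$ and each arrival $i$ appears in exactly one energy arc. Using $t^{\min}_{ji}+t^{\max}_{ji}=t^{\ac}_j+t^{\br}_i$, we get
\[
L_C+U_C=\sum_{a\in A_\wait}(l_a+u_a)+\sum_{j\in E_\dep}t^{\ac}_j+\sum_{i\in E_\arr}t^{\br}_i \eqqcolon K,
\]
which is independent of $\energyact$. Moreover $L_C=\sum_{a\in A_\wait}l_a+\w(C)$. Hence $[L_C,U_C]=[\lambda+\w(C),\,K-\lambda-\w(C)]$ with $\lambda\coloneqq \sum_{a\in A_\wait} l_a$ a constant. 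This interval is nonempty because $l\le u$ and $t^{\min}\le t^{\max}$.

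Let $\Delta\coloneqq \w(C^H)-\w(C)\ge 0$. Then $[L_{C^H},U_{C^H}]$ is obtained from $[L_C,U_C]$ by cutting $\Delta$ off each end, and the result is still nonempty. Every point of the larger interval therefore lies within distance $\Delta$ of the smaller one. Consequently
\[
\delta_{C^H}\le \delta_C+\Delta .
\]

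\emph{Step 3: conclusion.} Using Step 1 for $C^H$ and then Step 2,
\[
o(C^H)=\w(C)+\Delta-\min\{\w_{\min},\delta_{C^H}\}\ge \w(C)+\Delta-\min\{\w_{\min},\delta_C+\Delta\}.
\]
Since $\min\{\w_{\min},\delta_C+\Delta\}\le\min\{\w_{\min},\delta_C\}+\Delta$, the right-hand side is at least
\[
\w(C)-\min\{\w_{\min},\delta_C\}=o(C).
\]
Thus the solution built from $\energyact^H$ and $\pi^H$ has overlap at least that of any single-cycle solution, which proves the claim.

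The main obstacle I expect is Step 2, namely seeing that $\delta_C$ grows no faster than $\w(C)$ decreases. This hinges on the invariance of $L_C+U_C$ under the choice of perfect matching, and on the interval staying nonempty after shrinking. Both need to be checked carefully.
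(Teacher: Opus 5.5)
Your proof is correct and follows essentially the same route as the paper. Both arguments use the invariance of $L_C+U_C$ over perfect matchings, the common minimum arc weight on every Hamiltonian cycle, the bound $\delta_{C^H}\le\delta_C+\Delta$, and the same closing chain of inequalities via \cref{thm:struc-cycle}(b); you are slightly more careful than the paper, which writes $[L_{C^H},U_{C^H}]=[L_C,U_C]$ where containment is meant and does not explicitly check that the shrunk interval is nonempty.
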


\begin{proof}
    Let $(\energyact, \pi, x, o)$ be optimal such that $C \coloneqq \energyact \cup A_\wait$ is a single cycle. Since $\energyact$ and $\energyact^H$ are both perfect matchings,
    $$L_C + U_C = \sum_{a \in A_\wait} (l_a + u_a) + \sum_{i \in E_{\arr}} t_i^\br + \sum_{j \in E_{\dep}} t_j^\ac = L_{C^H} + U_{C^H}.$$
    Moreover, 
    $$ \min\{t_a^{\min} \mid a \in 
    \energyact\}  = \min\{ \min\{ t_i^\br \mid i \in E_{\arr}\},  \min\{t_j^\ac \mid j \in E_{\dep}\}\} = \min\{t_a^{\min} \mid a \in 
    \energyact^H\},$$
    and we call this quantity $\vartheta$.
    Since $C^H$ is of maximum weight, we have $k \coloneqq L_{C^H} - L_C = \w(C^H) - \w(C) \geq 0$, and therefore $U_{C^H} = L_C + U_C - L_{C^H} = U_C - k$, so that $[L_{C^H}, U_{C^H}] = [L_C+k, U_C-k] = [L_C, U_C]$. We derive that $\delta_{C^H} \leq \delta_C + k$ and, by \cref{thm:struc-cycle} (b), $$o(C^H) = \w(C^H) - \min\{\vartheta, \delta_{C_H}\} \geq \w(C) + k - \min\{\vartheta, \delta_C + k\} \geq \w(C) - \min\{\vartheta, \delta_C\} = o(C),$$
    so that $(\energyact^H, \pi^H, x^H, o^H)$ is optimal.
\end{proof}

By \cref{thm:findpath} it follows that this variant of \pespenergy can be solved to optimality in polynomial time.

\subsection{At Least Two Cycles of Trains Induced by $\energyact$}

Let us now consider the other case. We require every solution $S=(\energyact, \pi, x, o)$ to \pespenergy to contain at least two cycles in the union of $\energyact \dotcup A_{\wait}$. The following lemma shows that \pespenergy with this additional constraint on the solution structure is NP-complete on a \osn. 

\begin{theorem}
\label{thm:pesp-energy-two-cycles}
	It is NP-complete to decide whether there is a solution $S=(\energyact, \pi, x, o)$ to \pespenergy on a \osn with a total overlap bigger or equal to a given $O \in \R$ 
    such that $\energyact\dotcup A_{\wait}$ consists of at least two cycles.
\end{theorem}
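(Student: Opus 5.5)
We reduce from \textsc{Partition}, which is NP-complete. Membership in NP is straightforward. A certificate consists of the matching $\energyact$ and the timetable $\pi$, both of polynomial size. Given them, we compute the tensions and the overlaps via \cref{lem:brake-traction-overlap}, count the cycles of $\energyact \dotcup A_\wait$, and compare the total overlap with $O$, all in polynomial time.

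For hardness, take a \textsc{Partition} instance with positive integers $a_1,\dotsc,a_n$ and $\sum_i a_i = 2B$. If some $a_i > B$, the answer is trivially no, and we output a fixed no-instance. Otherwise we may assume $B \ge 3$ and build the following \osn $\cE_n^\energy = K_{n,n}$ with $n$ lines.
\begin{itemize}
\item The period time is $T \coloneqq B$.
\item All acceleration and braking times are $t_\ell^\ac = t_\ell^\br = 1$, so that $t_j^\ac + t_i^\br = 2 < T$.
\item The waiting activity of line~$\ell$ is fixed: $l_a = u_a \coloneqq a_\ell - 1 \in [0, T-1]$.
\item The target is $O \coloneqq n$.
\end{itemize}
Since every energy arc has $t^{\min}_a = t^{\max}_a = 1$, \cref{cor:full-overlap} says that arc $a$ contributes overlap $1$ exactly if $x_a = 1$, and at most $1$ otherwise. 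A total overlap of at least $n$ therefore forces $\energyact$ to be a perfect matching with full overlap on every arc.

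The key step is to characterise full overlap on a single cycle $C$ of $\energyact \dotcup A_\wait$. With all energy tensions equal to $1$ and the waiting tensions fixed, the tension sum is $x(C) = \sum_{\ell \in C} \bigl((a_\ell - 1) + 1\bigr) = \sum_{\ell \in C} a_\ell$. In the notation of \cref{def:delta}, $L_C = U_C = \sum_{\ell \in C} a_\ell$. So full overlap on $C$ is possible iff $\delta_C = 0$, i.e.\ iff $B$ divides $\sum_{\ell \in C} a_\ell$. By \cref{thm:struc-cycle}, such a cycle then indeed admits a timetable realising overlap $\w(C)$, and the cycles can be timetabled independently.

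With this, both directions follow.
\begin{itemize}
\item If $S \dotcup \bar S$ is a partition with $\sum_{\ell \in S} a_\ell = B$, take any cyclic matching on $S$ and any on $\bar S$. This gives two cycles, each with sum $B \equiv 0 \pmod T$, hence total overlap $n$.
\item Conversely, suppose a solution has overlap $\ge n$ and at least two cycles. Each cycle sum is a positive multiple of $B$, and the sums add up to $2B$. Hence there are exactly two cycles, each of sum $B$, and they form a partition.
\end{itemize}

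The main obstacle is the step where the required cycle structure interacts with periodicity. We must ensure that the trivial single Hamiltonian cycle, which always has sum $2B \equiv 0$, is excluded, and this is exactly what the constraint of at least two cycles does. We also must keep all data within the model's bounds ($0 \le l_a \le T-1$, $t^\ac + t^\br < T$). Because \textsc{Partition} is only weakly NP-hard, the numbers (and $T$) must be encoded in binary. That is legitimate here, as the instance size stays polynomial.
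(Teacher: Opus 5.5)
Your proposal is correct and follows the paper's proof essentially verbatim: it uses the same reduction from \textsc{Partition} with $T = \frac{1}{2}\sum_i a_i$, unit acceleration and braking times, fixed waiting times $a_\ell - 1$, target overlap $n$, and the same argument that each cycle's tension sum is a positive multiple of $T$, so at least two cycles forces exactly two cycles of sum $T$. Your extra care fills in details the paper leaves implicit: NP membership, mapping instances with some $a_i > B$ to a fixed no-instance so that $l_a \le T-1$, and requiring $B \ge 3$ so that $t^{\mathrm{ac}} + t^{\mathrm{br}} < T$. Odd total sums should likewise go to a fixed no-instance, which you exclude only implicitly by writing $\sum_i a_i = 2B$.
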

\begin{proof}
    The proof works by reduction from the partition problem. For the partition problem we are given a set of natural numbers $c_1, \dots, c_n \in \mathbb N$. Now, we want to find a subset $I \subseteq \{1, \dots, n\}$ such that $\sum_{i \in I} c_i = \frac{1}{2}\sum_{i=1}^n c_i$. Therefore, we construct a \pespenergy instance on a \osn of size $n$ with $t^{\ac} = t^{\br}=1$ and $l_{a_i} = u_{a_i} = c_i -1$ for all $a_i \in A_{\wait}$. We choose the period time to be $T= \frac{1}{2}\sum_{i=1}^n c_i$. There is a solution to the partition problem if and only if there is a solution to the defined two-cycle \pespenergy instance with an overlap greater or equal to $n$. Given a solution $I$ to the partition problem, we can choose an arbitrary matching $\energyact$ such that $\energyact \dotcup A_{\wait}$ contains a cycle $C_I$ with exactly the waiting activities~$a_i$ for $i \in I$ and a cycle $C_{\overline{I}}$ containing exactly all the other waiting activities. Due to the choice of $T$, it is possible to achieve full overlap on $C_I$ by choosing $x_{a} = 1$ for all energy activities $a \in A_{\energy}$ because 
    $$\sum_{a \in C_I} x_a = \sum_{a_i \in C_I \cap A_{\wait}} (c_i - 1) + \sum_{a \in C_I \cap A_{\energy}} 1 = \sum_{i \in I} c_i = T.$$
    The same can be done with the cycle $C_{\overline{I}}$. Hence, we have found a solution to \pespenergy with an overlap of at least $n$. On the other hand, we know that for each cycle $C$ in a solution to \pespenergy with overlap $n$ we must have $\sum_{a \in C} x_a = kT$ for a $k \in \mathbb N_0$. As we are required to have at least two cycles, we have $k = 1$. Hence, we have found a valid partition.
\end{proof}

\section{Case Studies}
\label{sec:numerics}

In this section, we present two case studies to explore qualitative conclusions that can be drawn from our integrated bicriteria model \pesppassenergy, and to examine its computational behavior. Both case studies focus on a one-station network. The smaller one is artificial with four trains, while the second is placed in a realistic setting with 14 trains.

\subsection{Case Study 1: Artificial Network with Four Trains}

We begin with analyzing a small instance on a \osn $\mathcal E_4$ with $n = 4$ trains and a period time of $T = 20$.

\begin{figure}[ht]
    \centering
	\begin{tikzpicture}[->,shorten >=1pt,auto, scale = 0.3]
    \tikzset{labelnode/.style={sloped, above, font=\footnotesize}}
    
    \filldraw[station] (0,0) rectangle ++ (14,14);
    
    
    \node[Arr] (4) at (12,5) {};
    \node[Dep] (1) at (2,5) {};
    
    \node[Arr] (2) at (2, 9){};
    \node[Dep] (3) at (12,9) {};
    
    \node[Arr] (6) at (5,2) {};
    \node[Dep] (5) at (5,12) {};
    
    \node[Arr] (7) at (9,12) {};
    \node[Dep] (8) at (9,2) {};
    
    \draw (4) edge node[labelnode, above] {40} (1); 
    \draw (6) edge node[labelnode, below] {20} (5); 
    \draw (7) edge node[labelnode, below] {20} (8);
    
    \draw[black] (2) edge node[labelnode,below] {40} (3);
    
    \draw[gray] (2) edge node[labelnode, pos=0.1] {5} (8);
    \draw[gray] (7) edge node[labelnode] {10} (3);
    \draw[gray] (7) edge node[labelnode, pos=0.1] {5} (1);
    \draw[gray] (4) edge node[labelnode, pos=0.1] {10} (5);
    \draw[gray] (4) edge node[labelnode, below] {10} (8);
    \draw[gray] (6) edge node[labelnode, pos=0.1] {10} (3);
    \draw[gray] (6) edge node[labelnode, below] {5} (1);
    \draw[gray] (2) edge node[labelnode] {5} (5);		
    
\end{tikzpicture}
	\caption{Artificial four-train instance with activity weights for the waiting (black) and transfer activities (gray), see \cite{atmos}.}
	\label{fig:mini-instance}
\end{figure}

\paragraph{Instance description.}
For all four waiting activities $a \in A_\wait$, we choose uniform bounds $l_\wait = 1$ and $u_\wait = 4$. We set a uniform minimum transfer time of $l_\transfer = 3$ for all transfer activities in $A_\transfer$ and set the upper bound to $l_\transfer + T - 1 = 22$. As acceleration and braking times, we choose $t^\ac = 5$ and $t^\br = 6$, respectively, for all 16 possible energy arcs $a \in A_\energy$. Finally, we assign a symmetric distribution of weights as indicated in \cref{fig:mini-instance}.

\paragraph{Computing the maximum overlap.}
The maximum overlap in the sense of the \pespenergy problem can be determined right away: In particular, since a perfect matching of energy arcs has size $n = 4$, the overlap cannot be larger than $n \cdot t^{\min} = 20$ by \Cref{prop:upperbound-matching}. Since all acceleration times are equal in our instance, by \Cref{prop:specialcase1}, there is an optimal solution consisting of one Hamiltonian cycle $C$. We compute $L_C = n \cdot l_\wait + n \cdot t^{\min} = 4 \cdot 1 + 4 \cdot 5 = 24$ and $U_C = n \cdot u_\wait + n \cdot t^{\max} = 4 \cdot 4 + 4 \cdot 6 = 40 = 2T$, so that $\delta_C = 0$ and hence $o(C) = \w(C) = 20$ by \Cref{thm:struc-cycle}.

\paragraph{Solving the instance.}
We solve the resulting \pesppassenergy instance using an $\varepsilon$\nobreakdash-constraint method, extending the mixed-integer program \eqref{mip:obj1}-\eqref{mip:pesp5} by lower bound constraints on the total overlap, and running the solver Gurobi~12 \cite{gurobi} on an Intel Xeon E3-1270 v6 CPU with 32 GB RAM. Obtaining all Pareto-optimal solutions takes less than two seconds, which makes this instance suitable for further sensitivity analysis.

\paragraph{Sensitivity analysis.}
To this end, based upon the nominal scenario described above, we vary the parameters $T, l_\transfer, l_\wait, u_\wait, t^\ac, t^\br$ individually and compute the corresponding Pareto fronts. The results are depicted in \cref{fig:mini-pareto}. When the period time $T$ is modified (cf.\ \cref{fig:mini-pareto-period}), we note that the travel time increases, as just missing a transfer -- being an effect of high overlap -- means to wait for almost a full period. For $T = 60$, we can only realize a maximum overlap of 15, as any Hamiltonian cycle $C$ now has $\delta_C = 9$, so that $o(C) = \rho(C) - \min\{t^{\min}, \delta_C\} = 20 - \min\{5, 9\} = 15$. In \cref{fig:mini-pareto-transfer}, we observe that the travel time is almost insensitive with respect to larger minimum transfer times for small overlaps and regarding smaller minimum transfer times for large overlaps. Our explanation is that for small overlaps, transfers can be kept short, so that only the increase in minimum travel time has to be paid for. On the other hand, for large overlaps, we see the opposite effect: Transfer activities will often be at their upper bound, as this provides the best overlaps, so that slightly decreasing this bound will also have only minor effects on the total travel time. For the variation of the waiting times (see \cref{fig:mini-pareto-min-wait,fig:mini-pareto-max-wait}), we see a comparatively large sensitivity for changing the lower bound~$l_\wait$, and much less for the upper bound~$u_\wait$. While $l_\wait$ directly impacts the in-vehicle time of many passengers, wider intervals $[l_\wait, u_\wait]$ offer lower travel times for a given overlap. The last two plots (\cref{fig:mini-pareto-ac,fig:mini-pareto-br}) show little difference compared to the nominal scenario for overlaps at most 16 and confirm the intuition that higher acceleration/braking times allow for larger overlaps and for better travel times for any given overlap.

\begin{figure}[p]
    \centering
    \begin{subfigure}{0.49\linewidth}
        \centering
        \begin{tikzpicture}[x=2.8mm,y=5mm]
	\tikzstyle{n} = [font=\footnotesize]
	\tikzstyle{p1} = [circle, draw, blue, fill=blue, inner sep=1]
	\tikzstyle{p2} = [circle, draw, green, fill=green, inner sep=1]
	\tikzstyle{p3} = [circle, draw, black, fill=black, inner sep=1]
	\tikzstyle{p4} = [circle, draw, red, fill=red, inner sep=1]
	\tikzstyle{p5} = [circle, draw, orange, fill=orange, inner sep=1]
	\draw[->] (-1,4) -- (-1,16);
	\draw[->] (-1,4) -- (21,4);
	\node[n, anchor=north, yshift=-15] at  (10.0,4) {brake-traction overlap};
	\node[n, rotate=90, anchor=south, yshift=35] at (0,10.0) {total passenger travel time};
	\draw[dotted, black!40] (-1,4) -- (21,4);
	\node[n, anchor=east] at (-1,4) {400};
	\draw[dotted, black!40] (-1,5) -- (21,5);
	\node[n, anchor=east] at (-1,5) {500};
	\draw[dotted, black!40] (-1,6) -- (21,6);
	\node[n, anchor=east] at (-1,6) {600};
	\draw[dotted, black!40] (-1,7) -- (21,7);
	\node[n, anchor=east] at (-1,7) {700};
	\draw[dotted, black!40] (-1,8) -- (21,8);
	\node[n, anchor=east] at (-1,8) {800};
	\draw[dotted, black!40] (-1,9) -- (21,9);
	\node[n, anchor=east] at (-1,9) {900};
	\draw[dotted, black!40] (-1,10) -- (21,10);
	\node[n, anchor=east] at (-1,10) {1000};
	\draw[dotted, black!40] (-1,11) -- (21,11);
	\node[n, anchor=east] at (-1,11) {1100};
	\draw[dotted, black!40] (-1,12) -- (21,12);
	\node[n, anchor=east] at (-1,12) {1200};
	\draw[dotted, black!40] (-1,13) -- (21,13);
	\node[n, anchor=east] at (-1,13) {1300};
	\draw[dotted, black!40] (-1,14) -- (21,14);
	\node[n, anchor=east] at (-1,14) {1400};
	\draw[dotted, black!40] (-1,15) -- (21,15);
	\node[n, anchor=east] at (-1,15) {1500};
	\draw[dotted, black!40] (-1,16) -- (21,16);
	\node[n, anchor=east] at (-1,16) {1600};
	\draw[dotted, black!40] (0,4) -- (0,16);
	\node[n, anchor=north] at (0,4) {0};
	\draw[dotted, black!40] (2,4) -- (2,16);
	\node[n, anchor=north] at (2,4) {2};
	\draw[dotted, black!40] (4,4) -- (4,16);
	\node[n, anchor=north] at (4,4) {4};
	\draw[dotted, black!40] (6,4) -- (6,16);
	\node[n, anchor=north] at (6,4) {6};
	\draw[dotted, black!40] (8,4) -- (8,16);
	\node[n, anchor=north] at (8,4) {8};
	\draw[dotted, black!40] (10,4) -- (10,16);
	\node[n, anchor=north] at (10,4) {10};
	\draw[dotted, black!40] (12,4) -- (12,16);
	\node[n, anchor=north] at (12,4) {12};
	\draw[dotted, black!40] (14,4) -- (14,16);
	\node[n, anchor=north] at (14,4) {14};
	\draw[dotted, black!40] (16,4) -- (16,16);
	\node[n, anchor=north] at (16,4) {16};
	\draw[dotted, black!40] (18,4) -- (18,16);
	\node[n, anchor=north] at (18,4) {18};
	\draw[dotted, black!40] (20,4) -- (20,16);
	\node[n, anchor=north] at (20,4) {20};
	\node[label={[n]0:$(T, l_\text{wait}, u_\text{wait}, l_\text{trans}, t^\text{ac}, t^\text{br})$}] (L0) at (0, 16) {};
	\node[p1, label={[n]0:$(10,1,4,3,5,6)$}, below=2.5mm of L0] (L1) {};
	\begin{pgfonlayer}{bg}
	\node[p1] at (0, 4.20) {}; 
	\node[p1] at (1, 4.20) {}; 
	\node[p1] at (2, 4.20) {}; 
	\node[p1] at (3, 4.20) {}; 
	\node[p1] at (4, 4.20) {}; 
	\node[p1] at (5, 4.20) {}; 
	\node[p1] at (6, 4.20) {}; 
	\node[p1] at (7, 4.20) {}; 
	\node[p1] at (8, 4.20) {}; 
	\node[p1] at (9, 4.20) {}; 
	\node[p1] at (10, 4.20) {}; 
	\node[p1] at (11, 4.20) {}; 
	\node[p1] at (12, 4.20) {}; 
	\node[p1] at (13, 4.20) {}; 
	\node[p1] at (14, 4.25) {}; 
	\node[p1] at (15, 4.30) {}; 
	\node[p1] at (16, 4.35) {}; 
	\node[p1] at (17, 4.40) {}; 
	\node[p1] at (18, 4.50) {}; 
	\node[p1] at (19, 4.85) {}; 
	\node[p1] at (20, 5.20) {}; 
	\end{pgfonlayer}
	\node[p2, label={[n]0:$(15,1,4,3,5,6)$}, below=2.5mm of L1] (L2) {};
	\begin{pgfonlayer}{bg}
	\node[p2] at (0, 5.00) {}; 
	\node[p2] at (1, 5.20) {}; 
	\node[p2] at (2, 5.20) {}; 
	\node[p2] at (3, 5.20) {}; 
	\node[p2] at (4, 5.20) {}; 
	\node[p2] at (5, 5.25) {}; 
	\node[p2] at (6, 5.30) {}; 
	\node[p2] at (7, 5.35) {}; 
	\node[p2] at (8, 5.40) {}; 
	\node[p2] at (9, 5.40) {}; 
	\node[p2] at (10, 5.43) {}; 
	\node[p2] at (11, 5.47) {}; 
	\node[p2] at (12, 5.50) {}; 
	\node[p2] at (13, 5.53) {}; 
	\node[p2] at (14, 5.55) {}; 
	\node[p2] at (15, 5.58) {}; 
	\node[p2] at (16, 5.60) {}; 
	\node[p2] at (17, 5.65) {}; 
	\node[p2] at (18, 5.70) {}; 
	\node[p2] at (19, 6.05) {}; 
	\node[p2] at (20, 6.40) {}; 
	\end{pgfonlayer}
	\node[p3, label={[n]0:$(20,1,4,3,5,6)$}, below=2.5mm of L2] (L3) {};
	\node[p3] at (0, 5.00) {}; 
	\node[p3] at (1, 6.20) {}; 
	\node[p3] at (2, 6.20) {}; 
	\node[p3] at (3, 6.20) {}; 
	\node[p3] at (4, 6.30) {}; 
	\node[p3] at (5, 6.40) {}; 
	\node[p3] at (6, 6.40) {}; 
	\node[p3] at (7, 6.45) {}; 
	\node[p3] at (8, 6.50) {}; 
	\node[p3] at (9, 6.55) {}; 
	\node[p3] at (10, 6.60) {}; 
	\node[p3] at (11, 6.70) {}; 
	\node[p3] at (12, 6.80) {}; 
	\node[p3] at (13, 6.90) {}; 
	\node[p3] at (14, 7.00) {}; 
	\node[p3] at (15, 7.10) {}; 
	\node[p3] at (16, 7.20) {}; 
	\node[p3] at (17, 8.45) {}; 
	\node[p3] at (18, 9.00) {}; 
	\node[p3] at (19, 9.55) {}; 
	\node[p3] at (20, 10.10) {}; 
	\node[p4, label={[n]0:$(30,1,4,3,5,6)$}, below=2.5mm of L3] (L4) {};
	\begin{pgfonlayer}{bg}
	\node[p4] at (0, 5.00) {}; 
	\node[p4] at (1, 7.40) {}; 
	\node[p4] at (2, 7.50) {}; 
	\node[p4] at (3, 7.60) {}; 
	\node[p4] at (4, 7.70) {}; 
	\node[p4] at (5, 7.80) {}; 
	\node[p4] at (6, 8.40) {}; 
	\node[p4] at (7, 8.45) {}; 
	\node[p4] at (8, 8.50) {}; 
	\node[p4] at (9, 8.55) {}; 
	\node[p4] at (10, 8.60) {}; 
	\node[p4] at (11, 9.20) {}; 
	\node[p4] at (12, 9.30) {}; 
	\node[p4] at (13, 9.40) {}; 
	\node[p4] at (14, 9.50) {}; 
	\node[p4] at (15, 9.60) {}; 
	\node[p4] at (16, 9.70) {}; 
	\node[p4] at (17, 9.80) {}; 
	\node[p4] at (18, 9.90) {}; 
	\node[p4] at (19, 10.35) {}; 
	\node[p4] at (20, 10.80) {}; 
	\end{pgfonlayer}
	\node[p5, label={[n]0:$(60,1,4,3,5,6)$}, below=2.5mm of L4] (L5) {};
	\begin{pgfonlayer}{bg}
	\node[p5] at (0, 5.00) {}; 
	\node[p5] at (1, 10.40) {}; 
	\node[p5] at (2, 10.50) {}; 
	\node[p5] at (3, 10.60) {}; 
	\node[p5] at (4, 10.70) {}; 
	\node[p5] at (5, 10.80) {}; 
	\node[p5] at (6, 11.60) {}; 
	\node[p5] at (7, 11.65) {}; 
	\node[p5] at (8, 11.70) {}; 
	\node[p5] at (9, 11.75) {}; 
	\node[p5] at (10, 11.80) {}; 
	\node[p5] at (11, 15.20) {}; 
	\node[p5] at (12, 15.30) {}; 
	\node[p5] at (13, 15.40) {}; 
	\node[p5] at (14, 15.50) {}; 
	\node[p5] at (15, 15.60) {}; 
	\end{pgfonlayer}
\end{tikzpicture}
        \caption{Variation of the period time $T$}
        \label{fig:mini-pareto-period}
    \end{subfigure}
    \hfill
    \begin{subfigure}{0.49\linewidth}
        \centering
        \begin{tikzpicture}[x=2.8mm,y=5mm]
	\tikzstyle{n} = [font=\footnotesize]
	\tikzstyle{p1} = [circle, draw, blue, fill=blue, inner sep=1]
	\tikzstyle{p2} = [circle, draw, green, fill=green, inner sep=1]
	\tikzstyle{p3} = [circle, draw, black, fill=black, inner sep=1]
	\tikzstyle{p4} = [circle, draw, red, fill=red, inner sep=1]
	\tikzstyle{p5} = [circle, draw, orange, fill=orange, inner sep=1]
	\draw[->] (-1,1) -- (-1,13);
	\draw[->] (-1,1) -- (21,1);
	\node[n, anchor=north, yshift=-15] at  (10.0,1) {brake-traction overlap};
	\node[n, rotate=90, anchor=south, yshift=35] at (0,7.0) {total passenger travel time};
	\draw[dotted, black!40] (-1,1) -- (21,1);
	\node[n, anchor=east] at (-1,1) {100};
	\draw[dotted, black!40] (-1,2) -- (21,2);
	\node[n, anchor=east] at (-1,2) {200};
	\draw[dotted, black!40] (-1,3) -- (21,3);
	\node[n, anchor=east] at (-1,3) {300};
	\draw[dotted, black!40] (-1,4) -- (21,4);
	\node[n, anchor=east] at (-1,4) {400};
	\draw[dotted, black!40] (-1,5) -- (21,5);
	\node[n, anchor=east] at (-1,5) {500};
	\draw[dotted, black!40] (-1,6) -- (21,6);
	\node[n, anchor=east] at (-1,6) {600};
	\draw[dotted, black!40] (-1,7) -- (21,7);
	\node[n, anchor=east] at (-1,7) {700};
	\draw[dotted, black!40] (-1,8) -- (21,8);
	\node[n, anchor=east] at (-1,8) {800};
	\draw[dotted, black!40] (-1,9) -- (21,9);
	\node[n, anchor=east] at (-1,9) {900};
	\draw[dotted, black!40] (-1,10) -- (21,10);
	\node[n, anchor=east] at (-1,10) {1000};
	\draw[dotted, black!40] (-1,11) -- (21,11);
	\node[n, anchor=east] at (-1,11) {1100};
	\draw[dotted, black!40] (-1,12) -- (21,12);
	\node[n, anchor=east] at (-1,12) {1200};
	\draw[dotted, black!40] (-1,13) -- (21,13);
	\node[n, anchor=east] at (-1,13) {1300};
	\draw[dotted, black!40] (0,1) -- (0,13);
	\node[n, anchor=north] at (0,1) {0};
	\draw[dotted, black!40] (2,1) -- (2,13);
	\node[n, anchor=north] at (2,1) {2};
	\draw[dotted, black!40] (4,1) -- (4,13);
	\node[n, anchor=north] at (4,1) {4};
	\draw[dotted, black!40] (6,1) -- (6,13);
	\node[n, anchor=north] at (6,1) {6};
	\draw[dotted, black!40] (8,1) -- (8,13);
	\node[n, anchor=north] at (8,1) {8};
	\draw[dotted, black!40] (10,1) -- (10,13);
	\node[n, anchor=north] at (10,1) {10};
	\draw[dotted, black!40] (12,1) -- (12,13);
	\node[n, anchor=north] at (12,1) {12};
	\draw[dotted, black!40] (14,1) -- (14,13);
	\node[n, anchor=north] at (14,1) {14};
	\draw[dotted, black!40] (16,1) -- (16,13);
	\node[n, anchor=north] at (16,1) {16};
	\draw[dotted, black!40] (18,1) -- (18,13);
	\node[n, anchor=north] at (18,1) {18};
	\draw[dotted, black!40] (20,1) -- (20,13);
	\node[n, anchor=north] at (20,1) {20};
	\node[label={[n]0:$(T, l_\text{wait}, u_\text{wait}, l_\text{trans}, t^\text{ac}, t^\text{br})$}] (L0) at (0, 13) {};
	\node[p1, label={[n]0:$(20,1,4,1,5,6)$}, below=2.5mm of L0] (L1) {};
	\begin{pgfonlayer}{bg}
	\node[p1] at (0, 1.80) {}; 
	\node[p1] at (1, 3.00) {}; 
	\node[p1] at (2, 3.60) {}; 
	\node[p1] at (3, 4.05) {}; 
	\node[p1] at (4, 4.10) {}; 
	\node[p1] at (5, 4.15) {}; 
	\node[p1] at (6, 4.20) {}; 
	\node[p1] at (7, 4.25) {}; 
	\node[p1] at (8, 4.30) {}; 
	\node[p1] at (9, 4.35) {}; 
	\node[p1] at (10, 4.40) {}; 
	\node[p1] at (11, 5.20) {}; 
	\node[p1] at (12, 5.55) {}; 
	\node[p1] at (13, 5.90) {}; 
	\node[p1] at (14, 6.00) {}; 
	\node[p1] at (15, 6.10) {}; 
	\node[p1] at (16, 7.20) {}; 
	\node[p1] at (17, 8.25) {}; 
	\node[p1] at (18, 8.80) {}; 
	\node[p1] at (19, 9.35) {}; 
	\node[p1] at (20, 9.90) {}; 
	\end{pgfonlayer}
	\node[p2, label={[n]0:$(20,1,4,2,5,6)$}, below=2.5mm of L1] (L2) {};
	\begin{pgfonlayer}{bg}
	\node[p2] at (0, 3.20) {}; 
	\node[p2] at (1, 5.10) {}; 
	\node[p2] at (2, 5.20) {}; 
	\node[p2] at (3, 5.30) {}; 
	\node[p2] at (4, 5.40) {}; 
	\node[p2] at (5, 5.50) {}; 
	\node[p2] at (6, 5.50) {}; 
	\node[p2] at (7, 5.75) {}; 
	\node[p2] at (8, 5.80) {}; 
	\node[p2] at (9, 5.85) {}; 
	\node[p2] at (10, 5.90) {}; 
	\node[p2] at (11, 5.90) {}; 
	\node[p2] at (12, 6.00) {}; 
	\node[p2] at (13, 6.35) {}; 
	\node[p2] at (14, 6.65) {}; 
	\node[p2] at (15, 6.70) {}; 
	\node[p2] at (16, 7.20) {}; 
	\node[p2] at (17, 8.35) {}; 
	\node[p2] at (18, 8.90) {}; 
	\node[p2] at (19, 9.45) {}; 
	\node[p2] at (20, 10.00) {}; 
	\end{pgfonlayer}
	\node[p3, label={[n]0:$(20,1,4,3,5,6)$}, below=2.5mm of L2] (L3) {};
	\node[p3] at (0, 5.00) {}; 
	\node[p3] at (1, 6.20) {}; 
	\node[p3] at (2, 6.20) {}; 
	\node[p3] at (3, 6.20) {}; 
	\node[p3] at (4, 6.30) {}; 
	\node[p3] at (5, 6.40) {}; 
	\node[p3] at (6, 6.40) {}; 
	\node[p3] at (7, 6.45) {}; 
	\node[p3] at (8, 6.50) {}; 
	\node[p3] at (9, 6.55) {}; 
	\node[p3] at (10, 6.60) {}; 
	\node[p3] at (11, 6.70) {}; 
	\node[p3] at (12, 6.80) {}; 
	\node[p3] at (13, 6.90) {}; 
	\node[p3] at (14, 7.00) {}; 
	\node[p3] at (15, 7.10) {}; 
	\node[p3] at (16, 7.20) {}; 
	\node[p3] at (17, 8.45) {}; 
	\node[p3] at (18, 9.00) {}; 
	\node[p3] at (19, 9.55) {}; 
	\node[p3] at (20, 10.10) {}; 
	\node[p4, label={[n]0:$(20,1,4,4,5,6)$}, below=2.5mm of L3] (L4) {};
	\begin{pgfonlayer}{bg}
	\node[p4] at (0, 6.40) {}; 
	\node[p4] at (1, 6.40) {}; 
	\node[p4] at (2, 6.40) {}; 
	\node[p4] at (3, 6.40) {}; 
	\node[p4] at (4, 6.40) {}; 
	\node[p4] at (5, 6.40) {}; 
	\node[p4] at (6, 6.50) {}; 
	\node[p4] at (7, 6.55) {}; 
	\node[p4] at (8, 6.60) {}; 
	\node[p4] at (9, 6.70) {}; 
	\node[p4] at (10, 6.70) {}; 
	\node[p4] at (11, 6.70) {}; 
	\node[p4] at (12, 6.80) {}; 
	\node[p4] at (13, 6.90) {}; 
	\node[p4] at (14, 7.00) {}; 
	\node[p4] at (15, 7.10) {}; 
	\node[p4] at (16, 7.20) {}; 
	\node[p4] at (17, 9.70) {}; 
	\node[p4] at (18, 10.10) {}; 
	\node[p4] at (19, 10.15) {}; 
	\node[p4] at (20, 10.20) {}; 
	\end{pgfonlayer}
	\node[p5, label={[n]0:$(20,1,4,5,5,6)$}, below=2.5mm of L4] (L5) {};
	\begin{pgfonlayer}{bg}
	\node[p5] at (0, 6.60) {}; 
	\node[p5] at (1, 6.60) {}; 
	\node[p5] at (2, 6.60) {}; 
	\node[p5] at (3, 6.60) {}; 
	\node[p5] at (4, 6.60) {}; 
	\node[p5] at (5, 6.60) {}; 
	\node[p5] at (6, 6.60) {}; 
	\node[p5] at (7, 6.70) {}; 
	\node[p5] at (8, 6.70) {}; 
	\node[p5] at (9, 6.70) {}; 
	\node[p5] at (10, 6.73) {}; 
	\node[p5] at (11, 6.77) {}; 
	\node[p5] at (12, 6.80) {}; 
	\node[p5] at (13, 6.90) {}; 
	\node[p5] at (14, 7.00) {}; 
	\node[p5] at (15, 7.10) {}; 
	\node[p5] at (16, 7.20) {}; 
	\node[p5] at (17, 10.65) {}; 
	\node[p5] at (18, 11.20) {}; 
	\node[p5] at (19, 11.75) {}; 
	\node[p5] at (20, 12.30) {}; 
	\end{pgfonlayer}
\end{tikzpicture}
        \caption{Variation of the minimum transfer time $l_\text{trans}$}
        \label{fig:mini-pareto-transfer}
    \end{subfigure}
    \begin{subfigure}{0.49\linewidth}
        \centering
        \begin{tikzpicture}[x=2.8mm,y=5mm]
	\tikzstyle{n} = [font=\footnotesize]
	\tikzstyle{p1} = [circle, draw, blue, fill=blue, inner sep=1]
	\tikzstyle{p2} = [circle, draw, green, fill=green, inner sep=1]
	\tikzstyle{p3} = [circle, draw, black, fill=black, inner sep=1]
	\tikzstyle{p4} = [circle, draw, red, fill=red, inner sep=1]
	\tikzstyle{p5} = [circle, draw, orange, fill=orange, inner sep=1]
	\draw[->] (-1,4) -- (-1,16);
	\draw[->] (-1,4) -- (21,4);
	\node[n, anchor=north, yshift=-15] at  (10.0,4) {brake-traction overlap};
	\node[n, rotate=90, anchor=south, yshift=35] at (0,10.0) {total passenger travel time};
	\draw[dotted, black!40] (-1,4) -- (21,4);
	\node[n, anchor=east] at (-1,4) {400};
	\draw[dotted, black!40] (-1,5) -- (21,5);
	\node[n, anchor=east] at (-1,5) {500};
	\draw[dotted, black!40] (-1,6) -- (21,6);
	\node[n, anchor=east] at (-1,6) {600};
	\draw[dotted, black!40] (-1,7) -- (21,7);
	\node[n, anchor=east] at (-1,7) {700};
	\draw[dotted, black!40] (-1,8) -- (21,8);
	\node[n, anchor=east] at (-1,8) {800};
	\draw[dotted, black!40] (-1,9) -- (21,9);
	\node[n, anchor=east] at (-1,9) {900};
	\draw[dotted, black!40] (-1,10) -- (21,10);
	\node[n, anchor=east] at (-1,10) {1000};
	\draw[dotted, black!40] (-1,11) -- (21,11);
	\node[n, anchor=east] at (-1,11) {1100};
	\draw[dotted, black!40] (-1,12) -- (21,12);
	\node[n, anchor=east] at (-1,12) {1200};
	\draw[dotted, black!40] (-1,13) -- (21,13);
	\node[n, anchor=east] at (-1,13) {1300};
	\draw[dotted, black!40] (-1,14) -- (21,14);
	\node[n, anchor=east] at (-1,14) {1400};
	\draw[dotted, black!40] (-1,15) -- (21,15);
	\node[n, anchor=east] at (-1,15) {1500};
	\draw[dotted, black!40] (-1,16) -- (21,16);
	\node[n, anchor=east] at (-1,16) {1600};
	\draw[dotted, black!40] (0,4) -- (0,16);
	\node[n, anchor=north] at (0,4) {0};
	\draw[dotted, black!40] (2,4) -- (2,16);
	\node[n, anchor=north] at (2,4) {2};
	\draw[dotted, black!40] (4,4) -- (4,16);
	\node[n, anchor=north] at (4,4) {4};
	\draw[dotted, black!40] (6,4) -- (6,16);
	\node[n, anchor=north] at (6,4) {6};
	\draw[dotted, black!40] (8,4) -- (8,16);
	\node[n, anchor=north] at (8,4) {8};
	\draw[dotted, black!40] (10,4) -- (10,16);
	\node[n, anchor=north] at (10,4) {10};
	\draw[dotted, black!40] (12,4) -- (12,16);
	\node[n, anchor=north] at (12,4) {12};
	\draw[dotted, black!40] (14,4) -- (14,16);
	\node[n, anchor=north] at (14,4) {14};
	\draw[dotted, black!40] (16,4) -- (16,16);
	\node[n, anchor=north] at (16,4) {16};
	\draw[dotted, black!40] (18,4) -- (18,16);
	\node[n, anchor=north] at (18,4) {18};
	\draw[dotted, black!40] (20,4) -- (20,16);
	\node[n, anchor=north] at (20,4) {20};
	\node[label={[n]0:$(T, l_\text{wait}, u_\text{wait}, l_\text{trans}, t^\text{ac}, t^\text{br})$}] (L0) at (0, 16) {};
	\node[p1, label={[n]0:$(20,0,0,3,5,6)$}, below=2.5mm of L0] (L1) {};
	\begin{pgfonlayer}{bg}
	\node[p1] at (0, 4.60) {}; 
	\node[p1] at (1, 4.60) {}; 
	\node[p1] at (2, 4.60) {}; 
	\node[p1] at (3, 4.60) {}; 
	\node[p1] at (4, 4.60) {}; 
	\node[p1] at (5, 4.60) {}; 
	\node[p1] at (6, 4.60) {}; 
	\node[p1] at (7, 4.70) {}; 
	\node[p1] at (8, 4.70) {}; 
	\node[p1] at (9, 4.75) {}; 
	\node[p1] at (10, 4.80) {}; 
	\node[p1] at (11, 5.00) {}; 
	\node[p1] at (12, 5.05) {}; 
	\node[p1] at (13, 5.10) {}; 
	\node[p1] at (14, 5.15) {}; 
	\node[p1] at (15, 5.20) {}; 
	\node[p1] at (16, 5.25) {}; 
	\node[p1] at (17, 5.30) {}; 
	\node[p1] at (18, 5.35) {}; 
	\node[p1] at (19, 5.40) {}; 
	\node[p1] at (20, 5.50) {}; 
	\end{pgfonlayer}
	\node[p2, label={[n]0:$(20,0,2,3,5,6)$}, below=2.5mm of L1] (L2) {};
	\begin{pgfonlayer}{bg}
	\node[p2] at (0, 4.60) {}; 
	\node[p2] at (1, 4.60) {}; 
	\node[p2] at (2, 4.60) {}; 
	\node[p2] at (3, 4.60) {}; 
	\node[p2] at (4, 4.60) {}; 
	\node[p2] at (5, 4.60) {}; 
	\node[p2] at (6, 4.60) {}; 
	\node[p2] at (7, 4.70) {}; 
	\node[p2] at (8, 4.70) {}; 
	\node[p2] at (9, 4.75) {}; 
	\node[p2] at (10, 4.80) {}; 
	\node[p2] at (11, 5.00) {}; 
	\node[p2] at (12, 5.05) {}; 
	\node[p2] at (13, 5.10) {}; 
	\node[p2] at (14, 5.15) {}; 
	\node[p2] at (15, 5.20) {}; 
	\node[p2] at (16, 5.25) {}; 
	\node[p2] at (17, 5.30) {}; 
	\node[p2] at (18, 5.35) {}; 
	\node[p2] at (19, 5.40) {}; 
	\node[p2] at (20, 5.50) {}; 
	\end{pgfonlayer}
	\node[p3, label={[n]0:$(20,1,4,3,5,6)$}, below=2.5mm of L2] (L3) {};
	\node[p3] at (0, 5.00) {}; 
	\node[p3] at (1, 6.20) {}; 
	\node[p3] at (2, 6.20) {}; 
	\node[p3] at (3, 6.20) {}; 
	\node[p3] at (4, 6.30) {}; 
	\node[p3] at (5, 6.40) {}; 
	\node[p3] at (6, 6.40) {}; 
	\node[p3] at (7, 6.45) {}; 
	\node[p3] at (8, 6.50) {}; 
	\node[p3] at (9, 6.55) {}; 
	\node[p3] at (10, 6.60) {}; 
	\node[p3] at (11, 6.70) {}; 
	\node[p3] at (12, 6.80) {}; 
	\node[p3] at (13, 6.90) {}; 
	\node[p3] at (14, 7.00) {}; 
	\node[p3] at (15, 7.10) {}; 
	\node[p3] at (16, 7.20) {}; 
	\node[p3] at (17, 8.45) {}; 
	\node[p3] at (18, 9.00) {}; 
	\node[p3] at (19, 9.55) {}; 
	\node[p3] at (20, 10.10) {}; 
	\node[p4, label={[n]0:$(20,3,6,3,5,6)$}, below=2.5mm of L3] (L4) {};
	\begin{pgfonlayer}{bg}
	\node[p4] at (0, 5.40) {}; 
	\node[p4] at (1, 7.80) {}; 
	\node[p4] at (2, 7.80) {}; 
	\node[p4] at (3, 7.85) {}; 
	\node[p4] at (4, 7.90) {}; 
	\node[p4] at (5, 7.95) {}; 
	\node[p4] at (6, 8.00) {}; 
	\node[p4] at (7, 8.05) {}; 
	\node[p4] at (8, 8.10) {}; 
	\node[p4] at (9, 8.10) {}; 
	\node[p4] at (10, 8.10) {}; 
	\node[p4] at (11, 8.12) {}; 
	\node[p4] at (12, 8.15) {}; 
	\node[p4] at (13, 8.18) {}; 
	\node[p4] at (14, 8.20) {}; 
	\node[p4] at (15, 8.25) {}; 
	\node[p4] at (16, 8.30) {}; 
	\node[p4] at (17, 8.65) {}; 
	\node[p4] at (18, 9.00) {}; 
	\node[p4] at (19, 9.35) {}; 
	\node[p4] at (20, 9.70) {}; 
	\end{pgfonlayer}
	\node[p5, label={[n]0:$(20,5,10,3,5,6)$}, below=2.5mm of L4] (L5) {};
	\begin{pgfonlayer}{bg}
	\node[p5] at (0, 8.60) {}; 
	\node[p5] at (1, 9.10) {}; 
	\node[p5] at (2, 9.60) {}; 
	\node[p5] at (3, 9.80) {}; 
	\node[p5] at (4, 10.10) {}; 
	\node[p5] at (5, 10.40) {}; 
	\node[p5] at (6, 10.70) {}; 
	\node[p5] at (7, 11.00) {}; 
	\node[p5] at (8, 11.18) {}; 
	\node[p5] at (9, 11.27) {}; 
	\node[p5] at (10, 11.35) {}; 
	\node[p5] at (11, 11.43) {}; 
	\node[p5] at (12, 11.52) {}; 
	\node[p5] at (13, 11.58) {}; 
	\node[p5] at (14, 11.60) {}; 
	\node[p5] at (15, 11.62) {}; 
	\node[p5] at (16, 11.65) {}; 
	\node[p5] at (17, 11.68) {}; 
	\node[p5] at (18, 11.70) {}; 
	\node[p5] at (19, 11.75) {}; 
	\node[p5] at (20, 11.80) {}; 
	\end{pgfonlayer}
\end{tikzpicture}
        \caption{Variation of the waiting interval $[l_\text{wait}, u_\text{wait}]$}
        \label{fig:mini-pareto-min-wait}
    \end{subfigure}
    \begin{subfigure}{0.49\linewidth}
        \centering
        \begin{tikzpicture}[x=2.8mm,y=5mm]
	\tikzstyle{n} = [font=\footnotesize]
	\tikzstyle{p1} = [circle, draw, blue, fill=blue, inner sep=1]
	\tikzstyle{p2} = [circle, draw, green, fill=green, inner sep=1]
	\tikzstyle{p3} = [circle, draw, black, fill=black, inner sep=1]
	\tikzstyle{p4} = [circle, draw, red, fill=red, inner sep=1]
	\tikzstyle{p5} = [circle, draw, orange, fill=orange, inner sep=1]
	\draw[->] (-1,4) -- (-1,16);
	\draw[->] (-1,4) -- (21,4);
	\node[n, anchor=north, yshift=-15] at  (10.0,4) {brake-traction overlap};
	\node[n, rotate=90, anchor=south, yshift=35] at (0,10.0) {total passenger travel time};
	\draw[dotted, black!40] (-1,4) -- (21,4);
	\node[n, anchor=east] at (-1,4) {400};
	\draw[dotted, black!40] (-1,5) -- (21,5);
	\node[n, anchor=east] at (-1,5) {500};
	\draw[dotted, black!40] (-1,6) -- (21,6);
	\node[n, anchor=east] at (-1,6) {600};
	\draw[dotted, black!40] (-1,7) -- (21,7);
	\node[n, anchor=east] at (-1,7) {700};
	\draw[dotted, black!40] (-1,8) -- (21,8);
	\node[n, anchor=east] at (-1,8) {800};
	\draw[dotted, black!40] (-1,9) -- (21,9);
	\node[n, anchor=east] at (-1,9) {900};
	\draw[dotted, black!40] (-1,10) -- (21,10);
	\node[n, anchor=east] at (-1,10) {1000};
	\draw[dotted, black!40] (-1,11) -- (21,11);
	\node[n, anchor=east] at (-1,11) {1100};
	\draw[dotted, black!40] (-1,12) -- (21,12);
	\node[n, anchor=east] at (-1,12) {1200};
	\draw[dotted, black!40] (-1,13) -- (21,13);
	\node[n, anchor=east] at (-1,13) {1300};
	\draw[dotted, black!40] (-1,14) -- (21,14);
	\node[n, anchor=east] at (-1,14) {1400};
	\draw[dotted, black!40] (-1,15) -- (21,15);
	\node[n, anchor=east] at (-1,15) {1500};
	\draw[dotted, black!40] (-1,16) -- (21,16);
	\node[n, anchor=east] at (-1,16) {1600};
	\draw[dotted, black!40] (0,4) -- (0,16);
	\node[n, anchor=north] at (0,4) {0};
	\draw[dotted, black!40] (2,4) -- (2,16);
	\node[n, anchor=north] at (2,4) {2};
	\draw[dotted, black!40] (4,4) -- (4,16);
	\node[n, anchor=north] at (4,4) {4};
	\draw[dotted, black!40] (6,4) -- (6,16);
	\node[n, anchor=north] at (6,4) {6};
	\draw[dotted, black!40] (8,4) -- (8,16);
	\node[n, anchor=north] at (8,4) {8};
	\draw[dotted, black!40] (10,4) -- (10,16);
	\node[n, anchor=north] at (10,4) {10};
	\draw[dotted, black!40] (12,4) -- (12,16);
	\node[n, anchor=north] at (12,4) {12};
	\draw[dotted, black!40] (14,4) -- (14,16);
	\node[n, anchor=north] at (14,4) {14};
	\draw[dotted, black!40] (16,4) -- (16,16);
	\node[n, anchor=north] at (16,4) {16};
	\draw[dotted, black!40] (18,4) -- (18,16);
	\node[n, anchor=north] at (18,4) {18};
	\draw[dotted, black!40] (20,4) -- (20,16);
	\node[n, anchor=north] at (20,4) {20};
	\node[label={[n]0:$(T, l_\text{wait}, u_\text{wait}, l_\text{trans}, t^\text{ac}, t^\text{br})$}] (L0) at (0, 16) {};
	\node[p1, label={[n]0:$(20,1,1,3,5,6)$}, below=2.5mm of L0] (L1) {};
	\begin{pgfonlayer}{bg}
	\node[p1] at (0, 6.20) {}; 
	\node[p1] at (1, 6.20) {}; 
	\node[p1] at (2, 6.20) {}; 
	\node[p1] at (3, 6.20) {}; 
	\node[p1] at (4, 6.30) {}; 
	\node[p1] at (5, 6.40) {}; 
	\node[p1] at (6, 6.40) {}; 
	\node[p1] at (7, 6.45) {}; 
	\node[p1] at (8, 6.50) {}; 
	\node[p1] at (9, 6.55) {}; 
	\node[p1] at (10, 6.60) {}; 
	\node[p1] at (11, 6.70) {}; 
	\node[p1] at (12, 6.80) {}; 
	\node[p1] at (13, 6.90) {}; 
	\node[p1] at (14, 7.00) {}; 
	\node[p1] at (15, 7.10) {}; 
	\node[p1] at (16, 7.20) {}; 
	\end{pgfonlayer}
	\node[p2, label={[n]0:$(20,1,2,3,5,6)$}, below=2.5mm of L1] (L2) {};
	\begin{pgfonlayer}{bg}
	\node[p2] at (0, 6.20) {}; 
	\node[p2] at (1, 6.20) {}; 
	\node[p2] at (2, 6.20) {}; 
	\node[p2] at (3, 6.20) {}; 
	\node[p2] at (4, 6.30) {}; 
	\node[p2] at (5, 6.40) {}; 
	\node[p2] at (6, 6.40) {}; 
	\node[p2] at (7, 6.45) {}; 
	\node[p2] at (8, 6.50) {}; 
	\node[p2] at (9, 6.55) {}; 
	\node[p2] at (10, 6.60) {}; 
	\node[p2] at (11, 6.70) {}; 
	\node[p2] at (12, 6.80) {}; 
	\node[p2] at (13, 6.90) {}; 
	\node[p2] at (14, 7.00) {}; 
	\node[p2] at (15, 7.10) {}; 
	\node[p2] at (16, 7.20) {}; 
	\end{pgfonlayer}
	\node[p3, label={[n]0:$(20,1,4,3,5,6)$}, below=2.5mm of L2] (L3) {};
	\node[p3] at (0, 5.00) {}; 
	\node[p3] at (1, 6.20) {}; 
	\node[p3] at (2, 6.20) {}; 
	\node[p3] at (3, 6.20) {}; 
	\node[p3] at (4, 6.30) {}; 
	\node[p3] at (5, 6.40) {}; 
	\node[p3] at (6, 6.40) {}; 
	\node[p3] at (7, 6.45) {}; 
	\node[p3] at (8, 6.50) {}; 
	\node[p3] at (9, 6.55) {}; 
	\node[p3] at (10, 6.60) {}; 
	\node[p3] at (11, 6.70) {}; 
	\node[p3] at (12, 6.80) {}; 
	\node[p3] at (13, 6.90) {}; 
	\node[p3] at (14, 7.00) {}; 
	\node[p3] at (15, 7.10) {}; 
	\node[p3] at (16, 7.20) {}; 
	\node[p3] at (17, 8.45) {}; 
	\node[p3] at (18, 9.00) {}; 
	\node[p3] at (19, 9.55) {}; 
	\node[p3] at (20, 10.10) {}; 
	\node[p4, label={[n]0:$(20,1,6,3,5,6)$}, below=2.5mm of L3] (L4) {};
	\begin{pgfonlayer}{bg}
	\node[p4] at (0, 4.60) {}; 
	\node[p4] at (1, 6.20) {}; 
	\node[p4] at (2, 6.20) {}; 
	\node[p4] at (3, 6.20) {}; 
	\node[p4] at (4, 6.20) {}; 
	\node[p4] at (5, 6.20) {}; 
	\node[p4] at (6, 6.20) {}; 
	\node[p4] at (7, 6.20) {}; 
	\node[p4] at (8, 6.20) {}; 
	\node[p4] at (9, 6.45) {}; 
	\node[p4] at (10, 6.60) {}; 
	\node[p4] at (11, 6.70) {}; 
	\node[p4] at (12, 6.80) {}; 
	\node[p4] at (13, 6.90) {}; 
	\node[p4] at (14, 7.00) {}; 
	\node[p4] at (15, 7.10) {}; 
	\node[p4] at (16, 7.20) {}; 
	\node[p4] at (17, 7.85) {}; 
	\node[p4] at (18, 8.20) {}; 
	\node[p4] at (19, 8.75) {}; 
	\node[p4] at (20, 9.30) {}; 
	\end{pgfonlayer}
	\node[p5, label={[n]0:$(20,1,10,3,5,6)$}, below=2.5mm of L4] (L5) {};
	\begin{pgfonlayer}{bg}
	\node[p5] at (0, 4.60) {}; 
	\node[p5] at (1, 5.80) {}; 
	\node[p5] at (2, 6.20) {}; 
	\node[p5] at (3, 6.20) {}; 
	\node[p5] at (4, 6.20) {}; 
	\node[p5] at (5, 6.20) {}; 
	\node[p5] at (6, 6.20) {}; 
	\node[p5] at (7, 6.20) {}; 
	\node[p5] at (8, 6.20) {}; 
	\node[p5] at (9, 6.45) {}; 
	\node[p5] at (10, 6.60) {}; 
	\node[p5] at (11, 6.70) {}; 
	\node[p5] at (12, 6.80) {}; 
	\node[p5] at (13, 6.90) {}; 
	\node[p5] at (14, 7.00) {}; 
	\node[p5] at (15, 7.10) {}; 
	\node[p5] at (16, 7.20) {}; 
	\node[p5] at (17, 7.85) {}; 
	\node[p5] at (18, 8.20) {}; 
	\node[p5] at (19, 8.55) {}; 
	\node[p5] at (20, 8.90) {}; 
	\end{pgfonlayer}
\end{tikzpicture}
        \caption{Variation of the maximum waiting time $u_\text{wait}$}
        \label{fig:mini-pareto-max-wait}
    \end{subfigure}
    \begin{subfigure}{0.49\linewidth}
        \centering
        \begin{tikzpicture}[x=2.3mm,y=5mm]
	\tikzstyle{n} = [font=\footnotesize]
	\tikzstyle{p1} = [circle, draw, blue, fill=blue, inner sep=1]
	\tikzstyle{p2} = [circle, draw, green, fill=green, inner sep=1]
	\tikzstyle{p3} = [circle, draw, black, fill=black, inner sep=1]
	\tikzstyle{p4} = [circle, draw, red, fill=red, inner sep=1]
	\tikzstyle{p5} = [circle, draw, orange, fill=orange, inner sep=1]
	\draw[->] (-1,5) -- (-1,12);
	\draw[->] (-1,5) -- (25,5);
	\node[n, anchor=north, yshift=-15] at  (12.0,5) {brake-traction overlap};
	\node[n, rotate=90, anchor=south, yshift=35] at (0,8.5) {total passenger travel time};
	\draw[dotted, black!40] (-1,5) -- (25,5);
	\node[n, anchor=east] at (-1,5) {500};
	\draw[dotted, black!40] (-1,6) -- (25,6);
	\node[n, anchor=east] at (-1,6) {600};
	\draw[dotted, black!40] (-1,7) -- (25,7);
	\node[n, anchor=east] at (-1,7) {700};
	\draw[dotted, black!40] (-1,8) -- (25,8);
	\node[n, anchor=east] at (-1,8) {800};
	\draw[dotted, black!40] (-1,9) -- (25,9);
	\node[n, anchor=east] at (-1,9) {900};
	\draw[dotted, black!40] (-1,10) -- (25,10);
	\node[n, anchor=east] at (-1,10) {1000};
	\draw[dotted, black!40] (-1,11) -- (25,11);
	\node[n, anchor=east] at (-1,11) {1100};
	\draw[dotted, black!40] (-1,12) -- (25,12);
	\node[n, anchor=east] at (-1,12) {1200};
	\draw[dotted, black!40] (0,5) -- (0,12);
	\node[n, anchor=north] at (0,5) {0};
	\draw[dotted, black!40] (2,5) -- (2,12);
	\node[n, anchor=north] at (2,5) {2};
	\draw[dotted, black!40] (4,5) -- (4,12);
	\node[n, anchor=north] at (4,5) {4};
	\draw[dotted, black!40] (6,5) -- (6,12);
	\node[n, anchor=north] at (6,5) {6};
	\draw[dotted, black!40] (8,5) -- (8,12);
	\node[n, anchor=north] at (8,5) {8};
	\draw[dotted, black!40] (10,5) -- (10,12);
	\node[n, anchor=north] at (10,5) {10};
	\draw[dotted, black!40] (12,5) -- (12,12);
	\node[n, anchor=north] at (12,5) {12};
	\draw[dotted, black!40] (14,5) -- (14,12);
	\node[n, anchor=north] at (14,5) {14};
	\draw[dotted, black!40] (16,5) -- (16,12);
	\node[n, anchor=north] at (16,5) {16};
	\draw[dotted, black!40] (18,5) -- (18,12);
	\node[n, anchor=north] at (18,5) {18};
	\draw[dotted, black!40] (20,5) -- (20,12);
	\node[n, anchor=north] at (20,5) {20};
	\draw[dotted, black!40] (22,5) -- (22,12);
	\node[n, anchor=north] at (22,5) {22};
	\draw[dotted, black!40] (24,5) -- (24,12);
	\node[n, anchor=north] at (24,5) {24};
	\node[label={[n]0:$(T, l_\text{wait}, u_\text{wait}, l_\text{trans}, t^\text{ac}, t^\text{br})$}] (L0) at (0, 12) {};
	\node[p1, label={[n]0:$(20,1,4,3,3,6)$}, below=2.5mm of L0] (L1) {};
	\begin{pgfonlayer}{bg}
	\node[p1] at (0, 5.00) {}; 
	\node[p1] at (1, 6.20) {}; 
	\node[p1] at (2, 6.20) {}; 
	\node[p1] at (3, 6.20) {}; 
	\node[p1] at (4, 6.30) {}; 
	\node[p1] at (5, 6.40) {}; 
	\node[p1] at (6, 6.40) {}; 
	\node[p1] at (7, 6.80) {}; 
	\node[p1] at (8, 6.90) {}; 
	\node[p1] at (9, 6.90) {}; 
	\node[p1] at (10, 6.95) {}; 
	\node[p1] at (11, 7.00) {}; 
	\node[p1] at (12, 7.10) {}; 
	\end{pgfonlayer}
	\node[p2, label={[n]0:$(20,1,4,3,4,6)$}, below=2.5mm of L1] (L2) {};
	\begin{pgfonlayer}{bg}
	\node[p2] at (0, 5.00) {}; 
	\node[p2] at (1, 6.20) {}; 
	\node[p2] at (2, 6.20) {}; 
	\node[p2] at (3, 6.20) {}; 
	\node[p2] at (4, 6.30) {}; 
	\node[p2] at (5, 6.40) {}; 
	\node[p2] at (6, 6.40) {}; 
	\node[p2] at (7, 6.45) {}; 
	\node[p2] at (8, 6.50) {}; 
	\node[p2] at (9, 6.80) {}; 
	\node[p2] at (10, 6.90) {}; 
	\node[p2] at (11, 6.95) {}; 
	\node[p2] at (12, 7.00) {}; 
	\node[p2] at (13, 7.05) {}; 
	\node[p2] at (14, 7.10) {}; 
	\node[p2] at (15, 7.20) {}; 
	\node[p2] at (16, 7.30) {}; 
	\end{pgfonlayer}
	\node[p3, label={[n]0:$(20,1,4,3,5,6)$}, below=2.5mm of L2] (L3) {};
	\node[p3] at (0, 5.00) {}; 
	\node[p3] at (1, 6.20) {}; 
	\node[p3] at (2, 6.20) {}; 
	\node[p3] at (3, 6.20) {}; 
	\node[p3] at (4, 6.30) {}; 
	\node[p3] at (5, 6.40) {}; 
	\node[p3] at (6, 6.40) {}; 
	\node[p3] at (7, 6.45) {}; 
	\node[p3] at (8, 6.50) {}; 
	\node[p3] at (9, 6.55) {}; 
	\node[p3] at (10, 6.60) {}; 
	\node[p3] at (11, 6.70) {}; 
	\node[p3] at (12, 6.80) {}; 
	\node[p3] at (13, 6.90) {}; 
	\node[p3] at (14, 7.00) {}; 
	\node[p3] at (15, 7.10) {}; 
	\node[p3] at (16, 7.20) {}; 
	\node[p3] at (17, 8.45) {}; 
	\node[p3] at (18, 9.00) {}; 
	\node[p3] at (19, 9.55) {}; 
	\node[p3] at (20, 10.10) {}; 
	\node[p4, label={[n]0:$(20,1,4,3,6,6)$}, below=2.5mm of L3] (L4) {};
	\begin{pgfonlayer}{bg}
	\node[p4] at (0, 5.00) {}; 
	\node[p4] at (1, 6.20) {}; 
	\node[p4] at (2, 6.20) {}; 
	\node[p4] at (3, 6.20) {}; 
	\node[p4] at (4, 6.30) {}; 
	\node[p4] at (5, 6.40) {}; 
	\node[p4] at (6, 6.40) {}; 
	\node[p4] at (7, 6.45) {}; 
	\node[p4] at (8, 6.50) {}; 
	\node[p4] at (9, 6.55) {}; 
	\node[p4] at (10, 6.60) {}; 
	\node[p4] at (11, 6.60) {}; 
	\node[p4] at (12, 6.63) {}; 
	\node[p4] at (13, 6.67) {}; 
	\node[p4] at (14, 6.70) {}; 
	\node[p4] at (15, 6.80) {}; 
	\node[p4] at (16, 6.90) {}; 
	\node[p4] at (17, 7.00) {}; 
	\node[p4] at (18, 7.10) {}; 
	\node[p4] at (19, 7.88) {}; 
	\node[p4] at (20, 7.90) {}; 
	\node[p4] at (21, 8.45) {}; 
	\node[p4] at (22, 9.00) {}; 
	\node[p4] at (23, 9.55) {}; 
	\node[p4] at (24, 10.10) {}; 
	\end{pgfonlayer}
	\node[p5, label={[n]0:$(20,1,4,3,7,6)$}, below=2.5mm of L4] (L5) {};
	\begin{pgfonlayer}{bg}
	\node[p5] at (0, 5.00) {}; 
	\node[p5] at (1, 6.20) {}; 
	\node[p5] at (2, 6.20) {}; 
	\node[p5] at (3, 6.20) {}; 
	\node[p5] at (4, 6.30) {}; 
	\node[p5] at (5, 6.40) {}; 
	\node[p5] at (6, 6.40) {}; 
	\node[p5] at (7, 6.45) {}; 
	\node[p5] at (8, 6.50) {}; 
	\node[p5] at (9, 6.50) {}; 
	\node[p5] at (10, 6.53) {}; 
	\node[p5] at (11, 6.57) {}; 
	\node[p5] at (12, 6.60) {}; 
	\node[p5] at (13, 6.63) {}; 
	\node[p5] at (14, 6.67) {}; 
	\node[p5] at (15, 6.70) {}; 
	\node[p5] at (16, 6.80) {}; 
	\node[p5] at (17, 6.90) {}; 
	\node[p5] at (18, 7.00) {}; 
	\node[p5] at (19, 7.35) {}; 
	\node[p5] at (20, 7.45) {}; 
	\node[p5] at (21, 7.80) {}; 
	\node[p5] at (22, 7.90) {}; 
	\node[p5] at (23, 7.95) {}; 
	\node[p5] at (24, 8.00) {}; 
	\end{pgfonlayer}
\end{tikzpicture}
        \caption{Variation of the acceleration time $t^\ac$}
        \label{fig:mini-pareto-ac}
    \end{subfigure}
    \begin{subfigure}{0.49\linewidth}
        \centering
        \begin{tikzpicture}[x=2.8mm,y=5mm]
	\tikzstyle{n} = [font=\footnotesize]
	\tikzstyle{p1} = [circle, draw, blue, fill=blue, inner sep=1]
	\tikzstyle{p2} = [circle, draw, green, fill=green, inner sep=1]
	\tikzstyle{p3} = [circle, draw, black, fill=black, inner sep=1]
	\tikzstyle{p4} = [circle, draw, red, fill=red, inner sep=1]
	\tikzstyle{p5} = [circle, draw, orange, fill=orange, inner sep=1]
	\draw[->] (-1,5) -- (-1,12);
	\draw[->] (-1,5) -- (21,5);
	\node[n, anchor=north, yshift=-15] at  (10.0,5) {brake-traction overlap};
	\node[n, rotate=90, anchor=south, yshift=35] at (0,8.5) {total passenger travel time};
	\draw[dotted, black!40] (-1,5) -- (21,5);
	\node[n, anchor=east] at (-1,5) {500};
	\draw[dotted, black!40] (-1,6) -- (21,6);
	\node[n, anchor=east] at (-1,6) {600};
	\draw[dotted, black!40] (-1,7) -- (21,7);
	\node[n, anchor=east] at (-1,7) {700};
	\draw[dotted, black!40] (-1,8) -- (21,8);
	\node[n, anchor=east] at (-1,8) {800};
	\draw[dotted, black!40] (-1,9) -- (21,9);
	\node[n, anchor=east] at (-1,9) {900};
	\draw[dotted, black!40] (-1,10) -- (21,10);
	\node[n, anchor=east] at (-1,10) {1000};
	\draw[dotted, black!40] (-1,11) -- (21,11);
	\node[n, anchor=east] at (-1,11) {1100};
	\draw[dotted, black!40] (-1,12) -- (21,12);
	\node[n, anchor=east] at (-1,12) {1200};
	\draw[dotted, black!40] (0,5) -- (0,12);
	\node[n, anchor=north] at (0,5) {0};
	\draw[dotted, black!40] (2,5) -- (2,12);
	\node[n, anchor=north] at (2,5) {2};
	\draw[dotted, black!40] (4,5) -- (4,12);
	\node[n, anchor=north] at (4,5) {4};
	\draw[dotted, black!40] (6,5) -- (6,12);
	\node[n, anchor=north] at (6,5) {6};
	\draw[dotted, black!40] (8,5) -- (8,12);
	\node[n, anchor=north] at (8,5) {8};
	\draw[dotted, black!40] (10,5) -- (10,12);
	\node[n, anchor=north] at (10,5) {10};
	\draw[dotted, black!40] (12,5) -- (12,12);
	\node[n, anchor=north] at (12,5) {12};
	\draw[dotted, black!40] (14,5) -- (14,12);
	\node[n, anchor=north] at (14,5) {14};
	\draw[dotted, black!40] (16,5) -- (16,12);
	\node[n, anchor=north] at (16,5) {16};
	\draw[dotted, black!40] (18,5) -- (18,12);
	\node[n, anchor=north] at (18,5) {18};
	\draw[dotted, black!40] (20,5) -- (20,12);
	\node[n, anchor=north] at (20,5) {20};
	\node[label={[n]0:$(T, l_\text{wait}, u_\text{wait}, l_\text{trans}, t^\text{ac}, t^\text{br})$}] (L0) at (0, 12) {};
	\node[p1, label={[n]0:$(20,1,4,3,5,4)$}, below=2.5mm of L0] (L1) {};
	\begin{pgfonlayer}{bg}
	\node[p1] at (0, 5.00) {}; 
	\node[p1] at (1, 6.20) {}; 
	\node[p1] at (2, 6.20) {}; 
	\node[p1] at (3, 6.20) {}; 
	\node[p1] at (4, 6.30) {}; 
	\node[p1] at (5, 6.40) {}; 
	\node[p1] at (6, 6.40) {}; 
	\node[p1] at (7, 6.45) {}; 
	\node[p1] at (8, 6.50) {}; 
	\node[p1] at (9, 6.90) {}; 
	\node[p1] at (10, 6.95) {}; 
	\node[p1] at (11, 7.00) {}; 
	\node[p1] at (12, 7.05) {}; 
	\node[p1] at (13, 7.10) {}; 
	\node[p1] at (14, 7.15) {}; 
	\node[p1] at (15, 7.20) {}; 
	\node[p1] at (16, 7.30) {}; 
	\end{pgfonlayer}
	\node[p2, label={[n]0:$(20,1,4,3,5,5)$}, below=2.5mm of L1] (L2) {};
	\begin{pgfonlayer}{bg}
	\node[p2] at (0, 5.00) {}; 
	\node[p2] at (1, 6.20) {}; 
	\node[p2] at (2, 6.20) {}; 
	\node[p2] at (3, 6.20) {}; 
	\node[p2] at (4, 6.30) {}; 
	\node[p2] at (5, 6.40) {}; 
	\node[p2] at (6, 6.40) {}; 
	\node[p2] at (7, 6.45) {}; 
	\node[p2] at (8, 6.50) {}; 
	\node[p2] at (9, 6.55) {}; 
	\node[p2] at (10, 6.60) {}; 
	\node[p2] at (11, 6.80) {}; 
	\node[p2] at (12, 6.90) {}; 
	\node[p2] at (13, 7.00) {}; 
	\node[p2] at (14, 7.10) {}; 
	\node[p2] at (15, 7.15) {}; 
	\node[p2] at (16, 7.20) {}; 
	\end{pgfonlayer}
	\node[p3, label={[n]0:$(20,1,4,3,5,6)$}, below=2.5mm of L2] (L3) {};
	\node[p3] at (0, 5.00) {}; 
	\node[p3] at (1, 6.20) {}; 
	\node[p3] at (2, 6.20) {}; 
	\node[p3] at (3, 6.20) {}; 
	\node[p3] at (4, 6.30) {}; 
	\node[p3] at (5, 6.40) {}; 
	\node[p3] at (6, 6.40) {}; 
	\node[p3] at (7, 6.45) {}; 
	\node[p3] at (8, 6.50) {}; 
	\node[p3] at (9, 6.55) {}; 
	\node[p3] at (10, 6.60) {}; 
	\node[p3] at (11, 6.70) {}; 
	\node[p3] at (12, 6.80) {}; 
	\node[p3] at (13, 6.90) {}; 
	\node[p3] at (14, 7.00) {}; 
	\node[p3] at (15, 7.10) {}; 
	\node[p3] at (16, 7.20) {}; 
	\node[p3] at (17, 8.45) {}; 
	\node[p3] at (18, 9.00) {}; 
	\node[p3] at (19, 9.55) {}; 
	\node[p3] at (20, 10.10) {}; 
	\node[p4, label={[n]0:$(20,1,4,3,5,7)$}, below=2.5mm of L3] (L4) {};
	\begin{pgfonlayer}{bg}
	\node[p4] at (0, 5.00) {}; 
	\node[p4] at (1, 6.20) {}; 
	\node[p4] at (2, 6.20) {}; 
	\node[p4] at (3, 6.20) {}; 
	\node[p4] at (4, 6.30) {}; 
	\node[p4] at (5, 6.40) {}; 
	\node[p4] at (6, 6.40) {}; 
	\node[p4] at (7, 6.45) {}; 
	\node[p4] at (8, 6.50) {}; 
	\node[p4] at (9, 6.55) {}; 
	\node[p4] at (10, 6.60) {}; 
	\node[p4] at (11, 6.60) {}; 
	\node[p4] at (12, 6.70) {}; 
	\node[p4] at (13, 6.80) {}; 
	\node[p4] at (14, 6.90) {}; 
	\node[p4] at (15, 7.00) {}; 
	\node[p4] at (16, 7.20) {}; 
	\node[p4] at (17, 7.80) {}; 
	\node[p4] at (18, 7.90) {}; 
	\node[p4] at (19, 7.95) {}; 
	\node[p4] at (20, 8.00) {}; 
	\end{pgfonlayer}
	\node[p5, label={[n]0:$(20,1,4,3,5,8)$}, below=2.5mm of L4] (L5) {};
	\begin{pgfonlayer}{bg}
	\node[p5] at (0, 5.00) {}; 
	\node[p5] at (1, 6.20) {}; 
	\node[p5] at (2, 6.20) {}; 
	\node[p5] at (3, 6.20) {}; 
	\node[p5] at (4, 6.30) {}; 
	\node[p5] at (5, 6.40) {}; 
	\node[p5] at (6, 6.40) {}; 
	\node[p5] at (7, 6.45) {}; 
	\node[p5] at (8, 6.50) {}; 
	\node[p5] at (9, 6.50) {}; 
	\node[p5] at (10, 6.53) {}; 
	\node[p5] at (11, 6.57) {}; 
	\node[p5] at (12, 6.60) {}; 
	\node[p5] at (13, 6.70) {}; 
	\node[p5] at (14, 6.80) {}; 
	\node[p5] at (15, 6.85) {}; 
	\node[p5] at (16, 6.90) {}; 
	\node[p5] at (17, 7.25) {}; 
	\node[p5] at (18, 7.40) {}; 
	\node[p5] at (19, 7.45) {}; 
	\node[p5] at (20, 7.80) {}; 
	\end{pgfonlayer}
\end{tikzpicture}
        \caption{Variation of the braking time $t^\br$}
        \label{fig:mini-pareto-br}
    \end{subfigure}
    \caption{Pareto fronts for various modifications of the artificial instance with four trains. The baseline scenario is always in black.}
    \label{fig:mini-pareto}
\end{figure}

\subsection{Case Study 2: Berlin Ostkreuz with 14 Trains}

We now turn to a realistic larger case study. The Berlin Ostkreuz station is a major hub for local and regional trains, and has been the station with the fourth-highest passenger volume in Germany in 2024 \cite{ostkreuz_anfrage}. During weekend nights, six suburban railway lines of S-Bahn Berlin stop at Ostkreuz, offering convenient passenger transfers between trains using the circle line (S41/S42, S8) and the main west-east corridor (S3, S5, S7, S75), see \cref{fig:ostkreuz-timetable}. These use a 750\,V DC system~\cite{s-bahn}, meaning that synchronizing the braking and acceleration times can lead to significant energy savings. Although the timetable has certain symmetries with several trains meeting every 15 minutes, it is not a pure integrated fixed-interval timetable -- which cannot be realized with the available infrastructure anyway, see \cref{fig:ostkreuz-infrastructure}.

\paragraph{Basic instance parameters.}
Out of the six lines, five are operated with a period time of 30 minutes. Only the circle line S41/S42 runs every 15 minutes. For planning purposes, the S-Bahn Berlin timetable is computed with a resolution of 0.1 minutes, so that we consider a \pesppassenergy instance on a one-station network $\mathcal E_{14}$ with 14 trains and a period time of $T = 300$ for the sake of working with integer timestamps. As bounds for the waiting activities, we set a uniform lower bound of 5 time units (0.5 minutes), and for each train, we round the waiting time of the annual timetable up to the next multiple of 5 for the upper bound.
Around Ostkreuz, the maximum allowed speed is 80 km/h. Rounding up the times found with an interactive calculator tool \cite{golling_bahntechnik}, we set a uniform acceleration time of 0.4 minutes ($t^\ac = 4)$, and a braking time of 0.5 minutes ($t^\br = 5$). As an upper bound $W$ for the possible brake-traction overlap, we obtain $W = 14 \cdot 4 = 56$ time units (cf.\ \cref{prop:upperbound-matching}). As all acceleration times are equal, we expect in view of \cref{prop:specialcase1} a timetable that realizes an overlap of 56.

\paragraph{Railway timetabling requirements.}
However, we need to deal with additional railway timetabling constraints: For the lines S41 and S42, we impose a strict headway of 15.0 minutes between two subsequent arrivals and departures, respectively. This can be dealt with by additional headway activities, see, e.g., Liebchen and Möhring \cite{liebchen_modeling_2007}. Furthermore, we want to ensure that no track is occupied by two trains at the same time, and demand that a time of 1.7 minutes has to pass between a departure and the next arrival for any piece of infrastructure. Finally, while the four-track layout extends further west, the lines S5, S7, S75 need to share a standard double-track section towards the east, and we require every pair of two distinct trains to be at least 2.5 minutes apart here. These values are in accordance with the actual planning parameters at the infrastructure manager, and the track occupation constraints can be integrated into PESP using the methods of Masing et al.~\cite{masing_periodic_2023}. To incorporate the flexibility of the two platforms in the lower level, we use the IPESPC constraints introduced by Bortoletto et al.~\cite{bortoletto_periodic_2024}.

\paragraph{Transfer weights.}
Concerning the passenger flow, we introduce transfer activities with a minimum transfer time of 2.0 minutes only between the upper and the lower level, and between S3 and S5/S7/S75 from and to eastbound services. Other transfer relations can be realized at other stops in the network as well, and we will disregard them here. To obtain meaningful activity weights $w$ in the absence of data, we proceed as follows. Starting from the overall passenger load on a weekday \cite{nvp_2019}, we obtain demands of 147, 187, 184, 163 thousand passengers (kpax) for the directions north, east, south, west, respectively.  We stick to kpax as unit, being well aware that the demand in our after-midnight scenario is much lower, but still believing in similarly distributed passenger flows. As an adjustment, we assume that the demand splits 80:20 in favor of the outbound east to west direction, while we assume a 50:50 split for the circle line in the north and south directions. E.g, we assume that $187 \cdot 80\,\% = 149.6$ kpax want to travel eastbound, and only $163 \cdot 20\% = 32.6$ kpax westbound. Moreover, we suppose that the demand on the lines in the same direction is distributed as on their branching points in \cite{nvp_2019}. E.g., as further south of Ostkreuz, 57 kpax passengers use the direction of Grünau (S8), and 124 kpax use the circle line (S41/S42) we derive that a share of $\frac{57}{57+124} \approx 31.5\,\%$ of the total southbound demand is attributed to line S8. For simplicity, we assume that the transfers from or to a specific direction of a line are distributed in the same way, e.g., from all the passengers that transfer from eastbound line S5, $31.5\%$ will choose southbound S8. As it is unrealistic to assume that all passengers change, we require that 50\% of passengers in each train will stay on board. Lastly, to ensure feasibility, but also noting that the Ostkreuz area is a popular nightlife spot, we allow passengers to enter the system here. All these requirements can be modeled in a straightforward manner as a network-flow-type linear program. To obtain somewhat balanced activity weights, we choose to maximize the minimum number of passengers on a transfer activity. The linear program can be solved instantly with an optimal value of approximately 0.4 kpax. Out of the 347.7 total kpax, only 14.4 kpax enter at Ostkreuz. The highest weight of 7.2 kpax is associated to the transfer from eastbound S5, the strongest west-east line, to each of the two trains of southbound S41.

\paragraph{Computational setup.}
Having set up the details of the \pesppassenergy Ostkreuz instance, we solve the program \eqref{mip:obj1}-\eqref{mip:pesp5} with an $\varepsilon$-constraint method. We use Gurobi 12 \cite{gurobi} as a MIP solver on a high-throughput cluster with Intel Xeon Gold 6342 CPUs and 512 GB RAM, allowing Gurobi to run exclusively and in parallel on 32 threads. To further accelerate the solution process, we invoke Gurobi with best bound emphasis and aggressive cut generation, and additionally separate flip inequalities as user cuts (cf.\ Lindner and Masing \cite{lindner_split_2025}).

\paragraph{Results.}
It turns out that, also with all additional constraints, there is indeed a feasible timetable that realizes the maximum possible overlap of 56 time units. For this overlap, the minimum total travel time of the passengers is 25287.14 (see~\cref{fig:ostkreuz-timetable-overlap-56}), while neglecting energy considerations brings down the travel time to 21475.17 (see~\cref{fig:ostkreuz-timetable-overlap-0}), which is approximately 15\,\% less. For comparison, the timetable in operation (cf.\ \cref{fig:ostkreuz-timetable}) offers a travel time of 29840.55 with an overlap of 3, but is of course not targeted towards our choice of activity weights, and reflects constraints coming from other parts of the network. The full Pareto front is depicted in \cref{fig:ostkreuz-pareto}, along with computation times. The Pareto-efficient points are arranged in groups of four, which can be explained by the relatively short acceleration/braking times: Shifting the overlap on an energy arc by up to 4 time units still seems to allow to maintain the transfers, so that the picture is governed by the number of energy arcs with positive overlap rather than the actual value. For example, moving from overlap 52 to 53 means to have 14 energy arcs with positive overlap instead of 13, and causes a travel time increase of 1164.48, while going from overlap 53 to 54 requires only 30.39 more travel time. On the other hand, following the Pareto principle, significant overlaps can already be obtained with relatively small increases in travel time. E.g., 50\,\% of the maximum possible overlap need less than 4\,\% more travel time. Concerning computation times, the computational effort for the travel time minimization with an overlap of 32 or less is very manageable with wall times less than 1 hour, and even less than 10 minutes for an overlap less than 20. This changes dramatically with higher overlaps, the longest time needed being almost 17 hours to compute an optimal timetable with an overlap of 42.


\begin{center}
    \begin{figure}
        \centering
         \begin{tikzpicture}[scale=0.5]
            \tikzstyle{track} = [draw, line width=1]
            \tikzstyle{lin} = [align=center,font=\small]
            \tikzstyle{bridge} = [draw, black!50]
            \def\platform at (#1,#2)#3[#4]{\draw[black!60, fill=black!80, line width=1] (#1-3,#2-0.6) rectangle ++(6,1.2);\node[white,font=\footnotesize,align=center] at (#1,#2) {#4};}
            \def\platformv at (#1,#2)#3[#4]{\draw[black!60, fill=black!80, line width=1] (#1-0.6,#2-3) rectangle ++(1.2,6);\node[white,font=\footnotesize,align=center] at (#1,#2) {#4};}
            \platform at (4, 4) [];
            \platform at (4, 1) [];
            \coordinate (A1) at (0, 0);
            \coordinate (E1) at (8, 0);
            \coordinate (A2) at (0, 2);
            \coordinate (E2) at (8, 2);
            \coordinate (A3) at (0, 3);
            \coordinate (E3) at (8, 3);
            \coordinate (A4) at (0, 5);
            \coordinate (E4) at (8, 5);
            \coordinate (A5) at (3, 6);
            \coordinate (E5) at (3, -1);
            \coordinate (A6) at (6, 6);
            \coordinate (E6) at (6, -1);
            \draw[track, ->] (A1) -- (E1);
            \draw[track, ->] (A2) -- (E2);
            \draw[track, <-] (A3) -- (E3);
            \draw[track, <-] (A4) -- (E4);
            \fill[fill=white, fill opacity=0.9] (A5) rectangle (E6);
            \platformv at (4.5, 2.5) [];
            \draw[track, ->] ($(A5)+(0.5,0)$) -- ($(E5)+(0.5,0)$);
            \draw[track, <-] ($(A6)-(0.5,0)$) -- ($(E6)-(0.5,0)$);
            \draw[bridge] ($(A5)+(-0.5,0.5)$) -- (A5) -- (E5) -- ($(E5)+(-0.5,-0.5)$);
            \draw[bridge] ($(A6)+(0.5,0.5)$) -- (A6) -- (E6) -- ($(E6)+(0.5,-0.5)$);
            \node[lin, anchor=west] at ($ (E1)!0.5!(E2) $) {S3 S5 S7 S75\\eastbound};
            \node[lin, anchor=east] at ($ (A3)!0.5!(A4) $) {S3 S5 S7 S75\\westbound};
            \node[lin, anchor=north] at ($(E5)+(0.5,-0.5)$) {S41 S8\\southbound};
            \node[lin, anchor=south] at ($(A6)+(-0.5,0.5)$) {S42 S8\\northbound};
        \end{tikzpicture}
        \caption{Infrastructure layout of the S-Bahn Berlin part of the Ostkreuz station with two platforms on the lower level and one platform on the upper level.}
        \label{fig:ostkreuz-infrastructure}
    \end{figure}
\end{center}

\begin{center}
    \begin{figure}
        \centering
         \begin{tikzpicture}[xscale=.3,yscale=.5]
            \def\d{0.25}
            \tikzstyle{n} = [font=\footnotesize]
            \tikzstyle{r} = [anchor=north, font=\scriptsize, rotate=90, align=center]
            \node[n, anchor=north] at (0,0) {0}; 
            \foreach \i in {5,10,...,30} {
                \draw[dotted, black!40] (\i,0) -- (\i,12);
                \node[n, anchor=north] at (\i,0) {\i}; 
            }
            \draw (0,0) -- (0,12.5);
            \draw[->] (0,0) -- (31,0) node[n, anchor=west] {time [min]};
            \draw[dashed] (-16,10.5) -- (30, 10.5);
            \draw[dashed] (-16,8.5) -- (30, 8.5);
            \draw[dashed] (-16,4.5) -- (30, 4.5);
            \node[r] at (-16,11.5) {north-\\bound};
            \node[r] at (-16,9.5) {south-\\bound};
            \node[r] at (-16,6.5) {west-\\bound};
            \node[r] at (-16,2.5) {east-\\bound};
            \okoverlap[0.1]{7}{10}{17.5}
            \okoverlap[0.2]{3}{6}{14.4}
            \oktraine{12}{12.9}{17.2}{S42}{Ring $\circlearrowleft$}
            \oktrainx{12}{27.4}{27.9}{BurntOrange!50}
            \oktrainx{12}{27.9}{30.0}{S42}
            \oktrainx{12}{0.0}{2.2}{S42}
            \oktrainx{12}{2.2}{2.6}{ForestGreen!50}
            \oktraine{11}{9.3}{9.9}{S8}{Pankow}
            \oktraine{10}{18.0}{18.5}{S8}{Grünau}
            \oktraine{9}{13.6}{16.2}{S41}{Ring $\circlearrowright$}
            \oktrainx{9}{28.1}{28.6}{BurntOrange!50}
            \oktrainx{9}{28.6}{30.0}{S41}
            \oktrainx{9}{0.0}{0.4}{S41}
            \oktrainx{9}{0.4}{0.8}{ForestGreen!50}
            \oktrainx{8}{27.4}{27.9}{BurntOrange!50}
            \oktrain{8}{27.9}{30.0}{S75}{Warschauer Str.}
            \oktrainx{8}{0.0}{2.2}{S75}
            \oktrainx{8}{2.2}{2.7}{ForestGreen!50}
            \oktraine{7}{9.9}{17.2}{S7}{Potsdam Hbf}
            \oktraine{6}{12.9}{14.2}{S5}{Warschauer Str.}
            \oktraine{5}{23.1}{25.2}{S3}{Ostbahnhof}
            \oktrainx{4}{29.9}{30.0}{BurntOrange!50}
            \oktrainx{4}{0.0}{0.4}{BurntOrange!50}
            \oktrain{4}{0.4}{2.2}{S3}{Erkner}
            \oktrainx{4}{2.2}{2.6}{ForestGreen!50}
            \oktraine{3}{14.9}{16.2}{S5}{Strausberg Nord}
            \oktraine{2}{12.9}{19.2}{S7}{Ahrensfelde}
            \oktrainx{1}{27.4}{27.9}{BurntOrange!50}
            \oktrain{1}{27.9}{30.0}{S75}{Wartenberg}
            \oktrainx{1}{0.0}{1.2}{S75}{Wartenberg}
            \oktrainx{1}{1.2}{1.6}{ForestGreen!50}
		\end{tikzpicture}
        \caption{Arrivals and departures at Ostkreuz in weekend nights according to the annual timetable for 2025 with 14 trains within 30 minutes. Only line S41/S42 runs more than within this time frame. With an acceleration time of $0.4$ minutes (green) and a braking time of $0.5$ minutes (orange), only two energy arcs can be used to transfer energy with a total overlap of $0.3$ minutes (gray). Source: DB InfraGO AG.}
        \label{fig:ostkreuz-timetable}
    \end{figure}
\end{center}

\begin{center}
    \begin{figure}
        \centering
                \begin{tikzpicture}[x=2mm,y=1.5mm]
            \tikzstyle{n} = [font=\footnotesize]
            \tikzstyle{p} = [circle, draw, blue, fill=blue, inner sep=1]
            \tikzstyle{q} = [rectangle, draw, red, fill=red, inner sep=1]
            \def\xmin{-1}
            \def\xmax{57}
            \def\ymin{210}
            \def\ystep{220}
            \def\ymax{260}
            \draw[->] (\xmin,\ymin) -- (\xmin,\ymax);
            \draw[-] (\xmin,\ymin) -- (\xmax,\ymin);
            \draw[->] (\xmax,\ymin) -- (\xmax,\ymax);
            \node[n, anchor=north, yshift=-15] at  ($({(\xmin+\xmax)/2},\ymin)$) {brake-traction overlap [0.1 min]};
            \node[n, rotate=90, anchor=south, yshift=35] at ($(0,{(\ymin+\ymax)/2})$) {total passenger travel time [0.1 min]};
            \node[n, rotate=-90, anchor=south, yshift=15] at ($(\xmax,{(\ymin+\ymax)/2})$) {computation time [h]};
            \foreach \i in {0,4,...,56} {
                \draw[dotted, black!40] (\i,\ymin) -- (\i,\ymax);
                \node[n, anchor=north] at (\i,\ymin) {\i}; 
            }
            \foreach \i [evaluate=\i as \lab using int(\i)] in {\ymin,\ystep,...,\ymax} {
                \draw[dotted, black!40] (\xmin,\i) -- (\xmax,\i);
                \node[n, anchor=east] at (\xmin,\i) {\i00};
            }
            \foreach \i [evaluate=\i as \lab using int((\i - 210)/2)] in {\ymin,\ystep,...,\ymax} {
                \node[n, anchor=west] at (\xmax,\i) {\lab};
            }
\node[p] at (0, 214.7517) {}; 
\node[p] at (1, 214.8995) {}; 
\node[p] at (2, 214.9734) {}; 
\node[p] at (3, 215.0473) {}; 
\node[p] at (4, 215.1212) {}; 
\node[p] at (5, 215.5840) {}; 
\node[p] at (6, 215.8154) {}; 
\node[p] at (7, 216.0468) {}; 
\node[p] at (8, 216.2297) {}; 
\node[p] at (9, 216.6245) {}; 
\node[p] at (10, 216.8559) {}; 
\node[p] at (11, 217.0873) {}; 
\node[p] at (12, 217.3187) {}; 
\node[p] at (13, 218.1025) {}; 
\node[p] at (14, 218.3339) {}; 
\node[p] at (15, 218.6313) {}; 
\node[p] at (16, 218.7675) {}; 
\node[p] at (17, 220.1545) {}; 
\node[p] at (18, 220.3859) {}; 
\node[p] at (19, 220.5639) {}; 
\node[p] at (20, 220.6277) {}; 
\node[p] at (21, 221.1450) {}; 
\node[p] at (22, 221.3510) {}; 
\node[p] at (23, 221.4635) {}; 
\node[p] at (24, 221.5761) {}; 
\node[p] at (25, 222.0466) {}; 
\node[p] at (26, 222.1592) {}; 
\node[p] at (27, 222.2718) {}; 
\node[p] at (28, 222.3843) {}; 
\node[p] at (29, 223.3621) {}; 
\node[p] at (30, 223.5930) {}; 
\node[p] at (31, 223.8244) {}; 
\node[p] at (32, 224.0548) {}; 
\node[p] at (33, 226.5574) {}; 
\node[p] at (34, 226.6803) {}; 
\node[p] at (35, 226.8038) {}; 
\node[p] at (36, 226.9267) {}; 
\node[p] at (37, 230.5355) {}; 
\node[p] at (38, 230.8311) {}; 
\node[p] at (39, 231.1267) {}; 
\node[p] at (40, 231.4223) {}; 
\node[p] at (41, 233.3161) {}; 
\node[p] at (42, 233.4275) {}; 
\node[p] at (43, 233.7351) {}; 
\node[p] at (44, 234.0427) {}; 
\node[p] at (45, 235.1422) {}; 
\node[p] at (46, 235.5378) {}; 
\node[p] at (47, 235.9824) {}; 
\node[p] at (48, 236.4270) {}; 
\node[p] at (49, 238.9736) {}; 
\node[p] at (50, 239.4182) {}; 
\node[p] at (51, 239.8628) {}; 
\node[p] at (52, 240.3074) {}; 
\node[p] at (53, 251.9522) {}; 
\node[p] at (54, 252.2561) {}; 
\node[p] at (55, 252.5635) {}; 
\node[p] at (56, 252.8714) {}; 
\node[q] at (0, 210.0372) {}; 
\node[q] at (1, 210.0475) {}; 
\node[q] at (2, 210.0702) {}; 
\node[q] at (3, 210.0434) {}; 
\node[q] at (4, 210.0456) {}; 
\node[q] at (5, 210.0391) {}; 
\node[q] at (6, 210.0409) {}; 
\node[q] at (7, 210.0454) {}; 
\node[q] at (8, 210.0508) {}; 
\node[q] at (9, 210.0563) {}; 
\node[q] at (10, 210.0521) {}; 
\node[q] at (11, 210.0594) {}; 
\node[q] at (12, 210.0639) {}; 
\node[q] at (13, 210.0848) {}; 
\node[q] at (14, 210.0804) {}; 
\node[q] at (15, 210.1585) {}; 
\node[q] at (16, 210.1392) {}; 
\node[q] at (17, 210.2497) {}; 
\node[q] at (18, 210.2753) {}; 
\node[q] at (19, 210.3051) {}; 
\node[q] at (20, 210.3673) {}; 
\node[q] at (21, 210.3738) {}; 
\node[q] at (22, 210.5063) {}; 
\node[q] at (23, 210.4189) {}; 
\node[q] at (24, 210.4682) {}; 
\node[q] at (25, 210.6205) {}; 
\node[q] at (26, 210.6007) {}; 
\node[q] at (27, 210.6912) {}; 
\node[q] at (28, 210.6150) {}; 
\node[q] at (29, 211.7269) {}; 
\node[q] at (30, 211.7725) {}; 
\node[q] at (31, 211.8505) {}; 
\node[q] at (32, 211.8655) {}; 
\node[q] at (33, 214.6950) {}; 
\node[q] at (34, 215.8154) {}; 
\node[q] at (35, 214.7333) {}; 
\node[q] at (36, 216.5801) {}; 
\node[q] at (37, 233.5717) {}; 
\node[q] at (38, 237.4751) {}; 
\node[q] at (39, 242.9380) {}; 
\node[q] at (40, 238.0266) {}; 
\node[q] at (41, 230.3009) {}; 
\node[q] at (42, 243.5915) {}; 
\node[q] at (43, 234.9736) {}; 
\node[q] at (44, 239.4139) {}; 
\node[q] at (45, 220.3818) {}; 
\node[q] at (46, 220.0534) {}; 
\node[q] at (47, 223.3728) {}; 
\node[q] at (48, 227.4237) {}; 
\node[q] at (49, 222.1825) {}; 
\node[q] at (50, 221.4710) {}; 
\node[q] at (51, 243.5590) {}; 
\node[q] at (52, 218.8292) {}; 
\node[q] at (53, 236.1749) {}; 
\node[q] at (54, 235.3218) {}; 
\node[q] at (55, 230.8932) {}; 
\node[q] at (56, 233.4337) {}; 
\end{tikzpicture}   
        \caption{Pareto front (blue circles) and computation times (wall times, red squares) for the Ostkreuz instance}
        \label{fig:ostkreuz-pareto}
    \end{figure}
\end{center}

\begin{center}
    \begin{figure}
        \centering
    \begin{subfigure}{\linewidth}
         \begin{tikzpicture}[xscale=.3,yscale=.5]
            \def\d{0.25}
            \tikzstyle{n} = [font=\footnotesize]
            \tikzstyle{r} = [anchor=north, font=\scriptsize, rotate=90, align=center]
            \node[n, anchor=north] at (0,0) {0}; 
            \foreach \i in {5,10,...,30} {
                \draw[dotted, black!40] (\i,0) -- (\i,12);
                \node[n, anchor=north] at (\i,0) {\i}; 
            }
            \draw (0,0) -- (0,12.5);
            \draw[->] (0,0) -- (31,0) node[n, anchor=west] {time [min]};
            \draw[dashed] (-16,10.5) -- (30, 10.5);
            \draw[dashed] (-16,8.5) -- (30, 8.5);
            \draw[dashed] (-16,4.5) -- (30, 4.5);
            \node[r] at (-16,11.5) {north-\\bound};
            \node[r] at (-16,9.5) {south-\\bound};
            \node[r] at (-16,6.5) {west-\\bound};
            \node[r] at (-16,2.5) {east-\\bound};
            \okoverlap{7}{9}{0.7}
            \okoverlap{4}{9}{15.7}
            \okoverlap{5}{10}{18.2}
            \okoverlap{1}{12}{28.2}
            \okoverlap{3}{12}{13.2}
            \okoverlap{6}{11}{10.0}
            \okoverlap{7}{11}{8.7}
            \okoverlap{6}{12}{12.0}
            \okoverlap{2}{5}{21.2}
            \okoverlap{8}{12}{27.0}
            \okoverlap{1}{9}{29.5}
            \okoverlap{4}{10}{17.0}
            \okoverlap{3}{9}{14.5}
            \okoverlap{2}{8}{22.4}
            \oktraine{12}{12.5}{13.2}{S42}{Ring $\circlearrowleft$}
            \oktrainex{12}{27.5}{28.2}{S42}
            \oktraine{11}{9.2}{10.0}{S8}{Pankow}
            \oktraine{10}{17.5}{18.2}{S8}{Grünau}
            \oktrain{9}{0.0}{0.7}{S41}{Ring $\circlearrowright$}
            \oktrainx{9}{29.5}{30.0}{BurntOrange!50}
            \oktrainx{9}{0.7}{1.1}{ForestGreen!50}
            \oktrainex{9}{15.0}{15.7}{S41}            
            \oktraine{8}{22.9}{27.0}{S75}{Warschauer Str.}
            \oktraine{7}{1.2}{8.7}{S7}{Potsdam Hbf}
            \oktraine{6}{10.5}{12.0}{S5}{Warschauer Str.}
            \oktraine{5}{18.7}{21.2}{S3}{Ostbahnhof}
            \oktraine{4}{16.2}{17.0}{S3}{Erkner}
            \oktraine{3}{13.7}{14.5}{S5}{Strausberg Nord}
            \oktraine{2}{21.7}{22.4}{S7}{Ahrensfelde}
            \oktraine{1}{28.7}{29.5}{S75}{Wartenberg}
		\end{tikzpicture}
        \caption{Minimum travel time timetable with overlap 56. The matching of energy arcs induces a Hamiltonian cycle.}
        \label{fig:ostkreuz-timetable-overlap-56}
    \end{subfigure}
    
    \vspace{1em}
    \begin{subfigure}{\linewidth}
        \centering
        \begin{tikzpicture}[xscale=.3,yscale=.5]
            \def\d{0.25}
            \tikzstyle{n} = [font=\footnotesize]
            \tikzstyle{r} = [anchor=north, font=\scriptsize, rotate=90, align=center]
            \node[n, anchor=north] at (0,0) {0}; 
            \foreach \i in {5,10,...,30} {
                \draw[dotted, black!40] (\i,0) -- (\i,12);
                \node[n, anchor=north] at (\i,0) {\i}; 
            }
            \draw (0,0) -- (0,12.5);
            \draw[->] (0,0) -- (31,0) node[n, anchor=west] {time [min]};
            \draw[dashed] (-16,10.5) -- (30, 10.5);
            \draw[dashed] (-16,8.5) -- (30, 8.5);
            \draw[dashed] (-16,4.5) -- (30, 4.5);
            \node[r] at (-16,11.5) {north-\\bound};
            \node[r] at (-16,9.5) {south-\\bound};
            \node[r] at (-16,6.5) {west-\\bound};
            \node[r] at (-16,2.5) {east-\\bound};
            
            \oktrain{12}{0.0}{2.5}{S42}{Ring $\circlearrowleft$}
            \oktrainx{12}{29.5}{30.0}{BurntOrange!50}
            \oktrainx{12}{2.5}{2.9}{ForestGreen!50}
            \oktrainex{12}{15.0}{17.5}{S42}
            \oktraine{11}{12.8}{13.3}{S8}{Pankow}
            \oktraine{10}{20.5}{21.0}{S8}{Grünau}
            \oktrain{9}{0.0}{2.5}{S41}{Ring $\circlearrowright$}
            \oktrainx{9}{29.5}{30.0}{BurntOrange!50}
            \oktrainx{9}{2.5}{2.9}{ForestGreen!50}
            \oktrainex{9}{15.0}{17.5}{S41}            
            \oktraine{8}{11.3}{11.8}{S75}{Warschauer Str.}
            \oktraine{7}{15.2}{17.0}{S7}{Potsdam Hbf}
            \oktraine{6}{0.5}{2.0}{S5}{Warschauer Str.}
            \oktraine{5}{15.0}{17.0}{S3}{Ostbahnhof}
            \oktraine{4}{15.3}{17.2}{S3}{Erkner}
            \oktraine{3}{15.5}{17.0}{S5}{Strausberg Nord}
            \oktraine{2}{0.5}{2.0}{S7}{Ahrensfelde}
            \oktraine{1}{19.0}{19.5}{S75}{Wartenberg}
		\end{tikzpicture}
        \caption{Minimum travel time timetable with overlap 0. Although not explicitly required, this timetable turns out to be very symmetric and resembles an integrated fixed-interval timetable.}
        \label{fig:ostkreuz-timetable-overlap-0}
    \end{subfigure}
        \caption{Two Pareto-optimal timetables for the Ostkreuz instance, showing braking (orange), accelerating (green), and the matching of energy arcs with the resulting overlap (gray).}
        \label{fig:ostkreuz-timetable-pareto}
    \end{figure}
\end{center}

\section{Conclusion and Outlook} \label{sec:outlook}
We have introduced the \pesppassenergy problem that integrates the maximization of the brake-traction overlap time into the context of periodic railway timetabling by means of the Periodic Event Scheduling Problem. Apart from giving a MIP formulation, we characterized the structure of optimal solutions for both single objective problems on a \osn, and investigated the complexity landscape. On such networks, we obtained a polynomial-time algorithm with an additive performance guarantee for the \pespenergy problem, and exact polynomial-time algorithms in several special cases. Finally, we have presented two case studies. With our model, the computation of the Pareto front has been feasible for a real-world 14 train instance. Besides closing the remaining gap in the complexity status, the next step is to apply our model to network-scale instances. An integration with the task of finding timetables that allow energy-efficient driving is also a topic for future research.

\subsection*{Acknowledgements}

Sarah Roth and Sven Jäger were funded by the German Federal Ministry of Education and Research (BMBF), project number 05M22UKB (SynphOnie).

Niels Lindner: The work for this article has been conducted in the Research Campus MODAL funded by the German Federal Ministry of Research, Technology and Space (BMFTR) (fund number 05M25KEM).

\subsection*{Author Contributions}

\textit{Sarah~Roth:} Conceptualization, Methodology, Investigation, Software, Formal analysis, Writing -- Original Draft, Writing -- Review \& Editing, Visualization;
\textit{Sven~Jäger:} Conceptualization, Methodology, Investigation, Formal analysis, Writing -- Original Draft, Writing -- Review \& Editing, Visualization;
\textit{Niels~Lindner:} Methodology, Investigation, Software, Validation, Formal analysis, Writing -- Original Draft, Writing -- Review \& Editing, Visualization;
\textit{Anita~Schöbel:} Conceptualization, Methodology, Writing -- Review \& Editing, Supervision, Funding acquisition

\hypersetup{hidelinks}
\bibliographystyle{alphasven}
\bibliography{references}

\newpage
\section*{List of Symbols}

\begin{longtable}{p{.11\linewidth}p{.83\linewidth}}%
        \toprule
        Symbol & Description \\ \midrule
        $A$ & activity set \\
        $A_{\drive}$ & set of driving activities \\
        $A_{\wait}$ & set of waiting activities \\
        $A_{\transfer}$ & set of transfer activities \\
        $A_{\energy}$ & set of possible energy activities \\
        $a$ & an activity \\
        $\alpha$ & matching indicator variable in the MIP model \\
        $C$ & a cycle in $M \dotcup A_\wait$ for a matching $M \subseteq A_\energy$ \\
        $\delta_C$ & distance of the full overlap interval of cycle $C$ to nearest multiple of $T$\\
        $\cE$ & event-activity network \\
        $\cE_n$ & \osn with $n$ lines \\
        $\cE_n^{\mathrm{pass}}$ & \osn with $n$ lines for \pesppassenger, without energy arcs \\
        $\cE_n^{\energy}$ & \osn with $n$ lines for \pespenergy, without transfer arcs \\
        $E$ & event set \\
        $E_{\arr}$ & arrival event set \\
        $E_{\dep}$ & departure event set \\
        $G$ & graph with contracted waiting activities \\
        $\Gamma$ & large constant in the MIP model \\
        $K_{n,n}$ & \osn with all possible transfer or energy activities \\
        $\cL$ & set of lines \\
        $L_C$ & sum of waiting lower bounds and $t^{\min}$ on energy activities for a cycle $C$ \\
        $\ell$ & a line \\
        $l$ & lower bound vector \\
        $l^{\max}$ & duration of all transfer and waiting activities in a Basel solution structure \\
        $l_i$ & train with largest brake/accelerate time in cycle~$i$ \\
        $M$ & matching of selected energy activities \\
        $M_{\text{greedy}}$ & greedy matching \\
        $n$ & number of lines/trains \\
        $o$ & brake-traction overlap vector \\
        $P$ & Hamiltonian path \\
        $p$ & vector of modulo parameters in the MIP model \\
        $\pi$ & timetable \\
        $\w$ & shorthand for $t^{\min}$ \\
        $\w_G$ & $\w$ interpreted as arc weights on contracted graph $G$ \\
        $S$ & solution $(M, \pi, x, o)$ \\
        $s_i$ & train with smallest brake/accelerate time in cycle~$i$ \\
        $\sigma$ & permutation of departure events sorting the acceleration times \\
        $T$ & period time \\
        $t^{\ac}$ & acceleration time vector (indexed by departure events) \\
        $t^{\br}$ & braking time vector (indexed by arrival events) \\
        $t^{\max}$ & maxima of acceleration and braking times \\
        $t^{\min}$ & minima of acceleration and braking times \\
        $u$ & upper bound vector \\
        $U_C$ & sum of waiting upper bounds and $t^{\max}$ on energy activities for a cycle $C$ \\
        $\mathcal V$ & set of stations \\
        $v$ & a station \\
        $w$ & vector of passenger numbers/activity weights \\
        $w'(k\ell)$ & $|t_k^{\ac} - t_\ell^{\br}|$ \\
        $\varphi$ & permutation of arriving lines sorting the braking times \\
        $x$ & periodic tension vector \\
        \bottomrule
    \end{longtable}

\end{document}